\DeclareSymbolFont{bbold}{U}{bbold}{m}{n}
\DeclareSymbolFontAlphabet{\mathbbold}{bbold}
\newcommand\IH{{\mathbb{H}}} 
\newcommand\C{{\mathbb{C}}}
\newcommand\Q{{\mathbb{Q}}}
\newcommand\R{{\mathbb{R}}}
\newcommand\N{{\mathbb{N}}}
\newcommand\Z{{\mathbb{Z}}}
\newcommand\Pone{\mathbb{P}^1}
\newcommand{\rs}{\widehat{\C}}
\newcommand{\Aut}{\text{Aut}}
\newcommand{\Teich}{\text{Teich}}
\newcommand{\PMod}{\text{PMod}}
\newcommand{\Moduli}{\text{Moduli}}
\newcommand{\Homeo}{\text{Homeo}}
\newtheorem{thm}{Theorem}[section]
\newtheorem*{nonumthm}{Theorem}
\newtheorem{conj}[thm]{Conjecture}
\newtheorem{cor}[thm]{Corollary}
\newtheorem{lem}[thm]{Lemma}
\newtheorem{prop}[thm]{Proposition}
\newtheorem{ex}[thm]{Example}
\theoremstyle{definition}
\newtheorem{defn}{Definition}[section]
\theoremstyle{remark}
\newcommand{\sft}{{\sf t}}
\newcommand{\sfT}{{\sf T}}
\newcommand{\sfX}{{\sf X}}
\newcommand{\sfx}{{\sf x}}
\newcommand{\sfy}{{\sf y}}
\newcommand{\sfz}{{\sf z}}
\newcommand{\sfv}{{\sf v}}
\newcommand{\cG}{\mathcal{G}}
\newcommand{\cT}{\mathcal{T}}
\newcommand{\cS}{\mathcal{S}}
\newcommand{\calL}{\mathcal{L}}
\newcommand{\cJ}{\mathcal{J}}
\newcommand{\cN}{\mathcal{N}}
\newcommand{\cM}{\mathcal{M}}
\newcommand{\cW}{\mathcal{W}}
\newcommand{\cV}{\mathcal{V}}
\newcommand{\cO}{\mathcal{O}}
\newcommand{\Tw}{\text{Tw}}
\newcommand{\twist}{\text{tw}}
\newcommand{\curves}{\text{\sc{Curves}}}
\newcommand{\mc}{\text{\sc{MultiCurves}}}
\newcommand{\End}{\text{End}}
\newcommand{\whatsigma}{\widehat{\sigma}}
\newcommand{\wttau}{\widetilde{\tau}}
\newcommand{\wtmu}{\widetilde{\mu}}
\newcommand{\op}{^{\text{op}\!}}
\newcommand{\za}{\alpha}
\newcommand{\zf}{\phi}
\newcommand{\zg}{\gamma}
\newcommand{\zG}{\Gamma}
\newcommand{\zi}{\iota}
\newcommand{\zp}{\pi}
\newcommand{\zr}{\rho}
\newcommand{\zt}{\tau}
\newcommand{\bbZ}{\mathbb{Z}}
\newcommand{\co}{\colon
}
\newcommand{\diam}{\text{diam}}
\newcommand{\Stab}{\text{Stab}}
\newcommand{\wtgamma}{\widetilde{\gamma}}
\newcommand{\wtalpha}{\widetilde{\alpha}}
\begin{document}

\author{Walter Parry}
\email{walter.parry@emich.edu}
\author{Kevin M. Pilgrim}
\email{pilgrim@iu.edu}

\title{Characterizations of contracting Hurwitz bisets}

\begin{abstract}
A critically finite branched self-cover $f: (S^2, P) \to (S^2, P)$ determines naturally three iterated function systems: one on the pure mapping class group $\cG:=\PMod(S^2, P)$, one on the Teichm\"uller space $\cT:=\Teich(S^2, P)$, and one on a finite-dimensional real vector space $\cV$.  We show that contraction for any one of these systems implies contraction for the others.  
 
 \end{abstract}

\subjclass{37F10, 57M12, 57K20}

\keywords{Thurston map, mapping class group, joint spectral radius, holomorphic correspondence, biset}

\date{\today}

\maketitle
\setcounter{tocdepth}{1}

\tableofcontents

\newpage

\section{Introduction}
\label{secn:intro}

A \emph{Thurston map} $f: (S^2, P) \to (S^2, P)$ is an orientation-preserving, branched covering map of degree $d \geq 2$ whose set of branch values is contained in a finite set $P$ for which $f(P)\subset P$. A celebrated characterization  result of W. Thurston \cite{DH1} identifies those Thurston maps which are conjugate up to isotopy relative to $P$ to a rational function--these we call \emph{unobstructed}.  A corresponding rigidity result says that with a well-understood family of exceptions, such a rational function is unique up to holomorphic conjugacy.  The exceptions are Thurston maps with so-called Euclidean orbifold, and we do not discuss them here.  

An \emph{iterated function system} (IFS) is a set together with a
family of self-maps.  Under composition, it generates a semigroup of
self-maps.  The IFSs that arise in our setting are defined on complete locally compact unbounded metric spaces.  We are concerned with large-scale behavior, and the following notion is central to our development:
\begin{defn}
\label{defn:contracting}
An IFS on a complete, locally compact, unbounded metric space is \emph{contracting} if there is a compact subset, or \emph{attractor}, into which every orbit eventually lands. 
\end{defn}

In this work, we show how a Thurston map $f: (S^2, P) \to (S^2, P)$ leads naturally to three iterated function systems:  one on the pure mapping class group $\cG:=\PMod(S^2, P)$, one on the Teichm\"uller space $\cT:=\Teich(S^2, P)$, and one on a finite-dimensional vector space $\cV$.  Our main result, Theorem \ref{thm:main} below, says that contraction for any one of these IFSs implies contraction for the others.  

The IFSs we associate to $f$ are actually invariants not of the single map $f$, but rather of a class of maps naturally associated to $f$, which we now describe.  Given a Thurston map $f$, its associated \emph{pure augmented Hurwitz class} is
\[ \frak{H}:=\{g_0\circ f \circ g_1 : g_0, g_1 \in \Homeo^+(S^2, P)\}.\]
The set of isotopy classes of elements of $\frak{H}$ relative to $P$ is therefore a countable set, $B$.  Taking $n$-fold compositions, we obtain countable sets $B^{\circ n}$, $n=1, 2, 3, \ldots$, and indeed $B$ generates a semigroup, $B^*$. 
The action of pre- and post-composition by orientation-preserving homeomorphisms  on $\frak{H}$ descends to an action of the pure mapping class group $\cG:=\PMod(S^2, P)$ on $B$. Thus associated to $f$ is a natural $\cG$-\emph{biset}, which we call its associated \emph{Hurwitz biset}, denoted $B$.   The IFSs we introduce below are invariants of its Hurwitz biset; they do not depend on a chosen representative $f\in B$. A Thurston map is non-exceptional if and only if every element of $\frak{H}$ is non-exceptional; in this case, we also say that $B$ is non-exceptional, and we deal exclusively with such $B$.  Our perspective here is a shift in focus, away from the behavior of a single map $f$, and towards the behavior of its Hurwitz biset $B$.  This line of inquiry was first suggested explicitly by Kameyama \cite{kameyama:equivalence}.  We now briefly describe each of these IFSs in turn.

\subsection{IFS on $\cG$.} 
\label{subsecn:G}The Hurwitz $\cG$-biset $B$ has the property that the right action by pre-composition is free, with finitely many orbits. Setting $\sfX$ to be a right-orbit transversal, or \emph{basis} for $B$, it follows that given any $\sfx \in \sfX$ and $g \in \cG$, there are unique $\sfy \in \sfX$ and $h\in \cG$ with the property that $g\circ \sfx = \sfy \circ h$. Denoting $h:=x@g$, we obtain a collection $\{g \mapsto \sfx @ g : \sfx \in \sfX\}$ of self-maps of $\cG$, and thus an IFS on $\cG$. Equipping $\cG$ with any word metric, one says that $B$ is contracting over $\cG$ if this IFS is contracting.  This property is independent of the choice of metric and of basis; see \cite[Prop. 4.3.12]{MR4475129}. 

\subsection{IFS on $\cT$}
\label{subsecn:T} Any Thurston map $f: (S^2, P) \to (S^2, P)$ induces,
via pulling back complex structures, a holomorphic self-map $\sigma_f:
\cT \to \cT$ of the Teichm\"uller space $\cT:=\Teich(S^2, P)$. Applying
this observation to each of the elements comprising a basis $\sfX$
for $B$, we obtain a collection $\{\sigma_\sfx: \sfx \in \sfX\}$ of
self-maps of $\cT$, and thus an IFS on $\cT$. When $B$ is non-exceptional, the map $\sigma_f$ is a contraction\footnote{Technically, one must either pass to the $|P|$th  iterate, or average the metric over such iterates; see \S \ref{subsecn:contraction}.} 
with respect to a suitable compatible metric on $\cT$,
though it is usually not uniformly contracting, since the space
$\cT$ is non-compact. It turns out that contraction of this IFS is equivalent to the limit space of a certain algebraic correspondence on moduli space being compact, and this in turn is independent of the chosen basis, so that contraction of the IFS is independent of the chosen basis.
See \S\S
\ref{subsecn:limitsets} and  \ref{subsecn:codingtree}. 

\subsection{IFS on a finite-dimensional vector space $\cV$}
\label{subsecn:V} A \emph{multicurve} is a finite collection $\Gamma$
of isotopy classes of simple, closed, unoriented, pairwise disjoint,
pairwise non-homotopic, essential, nonperipheral curves in $S^2-P$. We
allow such a collection to be empty.  A Thurston map $f$ induces a
pullback function $\Gamma \mapsto f^{-1}(\Gamma)$ on multicurves, and
a linear map $L[f,\Gamma]: \R[\Gamma] \to \R[f^{-1}(\Gamma)]$. The
group $\cG$ acts on the right, via pullback, on the set  of multicurves;  there are finitely many orbits.  Let $\sfT$ denote an orbit transversal to this action, and set $\cV:=\oplus_{\sft \in \sfT }\R[\Gamma_{\sft}]$. We show that each pair $(\sfx, \sft) \in \sfX \times \sfT$ induces an element of $\End(\cV)$. We obtain a collection $L[B]$ of self-maps of $\cV$, and thus an IFS on $\cV$.  A basic invariant is then its \emph{joint spectral radius}, $\whatsigma(L[B])$.  The contraction property is then equivalent to the condition that $\whatsigma(L[B])<1$; see \S \ref{secn:3to1}. 

The IFS on $\cV$ admits a faithful combinatorial encoding via what we
call here the \emph{strata scrambler}--a directed weighted graph with
nodes given by elements of $\sfT$, and edges weighted by matrices for
the endomorphisms induced by pairs $(\sfx, \sft)$.  The strata
scrambler is algorithmically computable.  Indeed, for maps with four or five postcritical points, and low
enough degree, there are effective implementations by the first author
\cite{netmapsite}, \cite{pf4site}.  See Section \ref{sec:examples} for
some examples.

\subsection*{Main result} We may now state our main result:

\begin{thm}
\label{thm:main}
Suppose $B$ is a non-exceptional Hurwitz biset of Thurston maps.  Then the following are equivalent:
\begin{enumerate}
\item The IFS on $\cG$ given in \S (\ref{subsecn:G}) is contracting.
\item The IFS on $\cT$ given in \S (\ref{subsecn:T}) is contracting.
\item The IFS on $\cV$ given in \S (\ref{subsecn:V}) is contracting.
\end{enumerate}
\end{thm}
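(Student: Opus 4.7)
The plan is to establish the three equivalences pairwise---$(2)\Leftrightarrow(3)$ and $(1)\Leftrightarrow(2)$---using the moduli space $\cM:=\cT/\cG$ as a bridge among the three settings. The fundamental geometric input is the thick--thin decomposition of $\cM$: its $\varepsilon$-thick part (where every essential simple closed curve has extremal length at least $\varepsilon$) is compact, while the thin parts form cusps indexed by multicurves, with depth controlled by $-\log$ of the minimal extremal length of a component.

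For $(2)\Leftrightarrow(3)$, the key observation is that an orbit of the IFS on $\cT$ projects to an orbit in $\cM$ that leaves every compact set if and only if some sequence of pullbacks of a multicurve $\Gamma_\sft$ along a word $w=\sfx_n\cdots\sfx_1$ in the free monoid $\sfX^*$ has extremal length tending to zero. By the standard estimates used in the proof of Thurston's characterization theorem, extremal lengths of components of the $w$-pullback of $\Gamma_\sft$ transform, up to bounded multiplicative error, by the composed endomorphisms in $\End(\cV)$ attached to the successive pairs $(\sfx_i,\sft_i)$. Orbits therefore stay bounded in $\cM$ precisely when $\whatsigma(L[B])<1$, by the standard characterization of joint spectral radii via uniform decay of product norms.

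For $(1)\Leftrightarrow(2)$, fix a basepoint $\tau_0$ in the thick part of $\cT$. Because $\cG$ acts properly discontinuously on $\cT$ with cocompact quotient on each thick part, the Milnor--Schwarz lemma makes the word metric on $\cG$ quasi-isometric to the Teichm\"uller metric restricted to the orbit $\cG\cdot\tau_0$. The defining relation $g\circ\sfx=\sfy\circ(\sfx@g)$ of the $\cG$-IFS translates, under the induced maps $\sigma_\sfx$, to $\sigma_\sfx(g\cdot\tau_0)=(\sfx@g)\cdot\sigma_\sfy(\tau_0)$, so the word length of $\sfx@g$ is controlled, up to additive error, by the Teichm\"uller distance from $\sigma_\sfx(g\cdot\tau_0)$ to a compact fundamental set for $\cG$. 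A finite nucleus for the IFS on $\cG$ therefore forces Teichm\"uller orbits to remain bounded modulo $\cG$, and conversely a compact attractor in $\cM$ lifts to a uniform bound on the word length of $\sfx@g$ along any $\sfX^*$-trajectory.

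The main obstacle will be the direction $(2)\Rightarrow(1)$, because $\sigma_f$ is only a weak contraction in general---one must pass to an iterate, or average the Teichm\"uller metric, as the footnote in \S\ref{subsecn:T} indicates. Careful tracking of the multiplicative constants in the quasi-isometry, together with a bookkeeping argument ensuring that the cocycle $g\mapsto\sfx@g$ does not accumulate word length faster than the averaged Teichm\"uller contraction absorbs it, will be the delicate technical point. Throughout, the basis-independence of each IFS (cited as \cite[Prop.~4.3.12]{MR4475129} for $\cG$, and to be established in \S\ref{subsecn:limitsets} for $\cT$ and in \S\ref{subsecn:V} for $\cV$) must be invoked consistently so that the theorem genuinely characterizes contraction as a property of the Hurwitz biset $B$ itself, rather than of any particular representative.
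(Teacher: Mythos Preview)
Your outline captures two of the three implications in essentially the way the paper does them: the direction $(3)\Rightarrow(2)$ via Selinger's length estimates and the thick--thin decomposition is exactly the paper's argument in \S\ref{secn:3to1}, and your $(2)\Rightarrow(1)$ via the proper $\cG$-action on a compact backward-invariant set $K\subset\cM$ is the argument at the end of \S\ref{subsecn:codingtree}. But the paper proves a \emph{cycle} $(2)\Rightarrow(1)\Rightarrow(3)\Rightarrow(2)$, not the pairwise equivalences you propose, and the two directions you add --- $(1)\Rightarrow(2)$ and $(2)\Rightarrow(3)$ directly --- both have genuine gaps.

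For $(1)\Rightarrow(2)$: your Milnor--Schwarz reduction is circular. The identity $\sigma_\sfx(g\cdot\tau_0)=(\sfx@g)\cdot\sigma_\sfy(\tau_0)$ does control the $\cG$-part of an orbit, but the remaining factor $\sigma_{\sfy_n}\circ\cdots\circ\sigma_{\sfy_1}(\tau_0)$ is itself an arbitrary coding-tree orbit of the IFS on $\cT$ --- precisely the object whose boundedness you are trying to establish. A finite nucleus for the $\cG$-IFS says nothing about where these auxiliary points sit in $\cM$; they are not on the $\cG$-orbit of $\tau_0$ at all once $n\geq 1$. The paper avoids this by going through $(3)$.

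For the step $(1)\Rightarrow(3)$, which your proposal does not isolate, the paper uses an ingredient you do not mention: the multitwist subgroups $\Tw(\Gamma)$ are \emph{undistorted} in $\cG$ (Proposition~\ref{prop:undistorted}). This lets one compare the word norm on $\cG$ with the $\ell^1$-norm on $\R[\Gamma_\sft]$ uniformly over $\sft\in\sfT$, and thereby identify the linear IFS on $\cV$ with the restriction of the $\cG$-IFS to twist subgroups, up to bounded affine corrections (Proposition~\ref{prop:gtilde}). Without undistortion, contraction in the word metric need not transfer to contraction of the Thurston matrices.

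Finally, your direct $(2)\Rightarrow(3)$ via extremal lengths needs more than the Selinger estimate, which is only an \emph{upper} bound $|Z(f^{-1}(\Gamma),\wttau)|\leq \|L[f,\Gamma]\|\cdot|Z(\Gamma,\tau)|+C$. Boundedness of the $Z$'s does not by itself force $\widehat{\sigma}(L[B])<1$; one would need a matching lower bound to rule out $\widehat{\sigma}=1$, and the paper does not supply one --- it routes through $(1)$ instead.
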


\subsection{Contraction and rationality} W. Thurston's characterization of rational functions \cite{DH1} yields the following; see also Proposition \ref{prop:ratB} below, which gives more general statements.

\begin{thm}
\label{thm:joint}
For a non-exceptional Hurwitz biset $B$, any (equivalently, each) of the conditions (1), (2), (3),  in Theorem \ref{thm:main} imply 
\begin{enumerate}
\setcounter{enumi}{3}
\item every element of $B^*$ is unobstructed.  
\end{enumerate}
\end{thm}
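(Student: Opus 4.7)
The plan is to reduce to condition (3), namely $\whatsigma(L[B]) < 1$. By Theorem \ref{thm:main} this is equivalent to (1) and (2), so any of the three hypotheses gives us the joint spectral radius bound to work with. We then invoke Thurston's characterization of rational functions \cite{DH1}: a non-exceptional Thurston map $f$ is obstructed if and only if there exists an $f$-stable multicurve $\Gamma$ whose associated Thurston linear map $L[f,\Gamma]$ has spectral radius at least $1$. Our goal is to exclude this possibility for every $f \in B^*$ using the joint spectral radius hypothesis on the IFS $L[B]$ on $\cV$.

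The key step is to show that for each $f \in B^*$ and each $f$-stable multicurve $\Gamma$, the Thurston linear map $L[f,\Gamma]$ arises as a finite product of the endomorphisms of $\cV$ comprising $L[B]$. This is a chain-rule style computation read along the strata scrambler. Writing $f$, via the biset multiplication, as a word $\sfx_{i_1}\cdots\sfx_{i_n}$ in basis elements (with mapping class scatterings $h_j \in \cG$ absorbed), one pulls $\Gamma$ back through these factors in turn to obtain a sequence of multicurves $\Gamma_n = \Gamma, \Gamma_{n-1}, \ldots, \Gamma_0$. Each $\Gamma_j$ sits in a $\cG$-orbit represented by a transversal element $\sft_j \in \sfT$, and the induced pullback on $\R[\Gamma_j]$ is conjugate to the block of $\cV \to \cV$ induced by the pair $(\sfx_{i_j}, \sft_j)$. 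Composing blockwise along this path in the strata scrambler exhibits $L[f,\Gamma]$ as (the restriction of) a product of $n$ endomorphisms from $L[B]$.

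Given this identification, the conclusion follows from a standard property of the joint spectral radius: if $\whatsigma(L[B]) < 1$ then every finite product of elements of $L[B]$ has spectral radius strictly less than $1$, since otherwise iterating any offending word would force $\whatsigma(L[B]) \geq 1$. Consequently $L[f,\Gamma]$ has spectral radius $< 1$ for every $f \in B^*$ and every multicurve $\Gamma$, so by Thurston's theorem every such $f$ is unobstructed, establishing (4). The main obstacle is the middle step: one must keep careful track of how pre- and post-composition by mapping classes permutes $\cG$-orbits of multicurves, so that the path in the strata scrambler is well-defined and the product truly realizes $L[f,\Gamma]$ on the relevant subspace of $\cV$. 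Once this bookkeeping is in place, the spectral-radius implication is routine and the appeal to \cite{DH1} closes the argument.
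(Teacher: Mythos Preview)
Your proposal is correct and follows essentially the same route as the paper. The paper packages your ``key step'' as the functoriality $L[B^{\otimes n}]=(L[B])^n$ established in \S\ref{subsecn:basicL} (see Proposition~\ref{prop:whenzero}), and then records the conclusion as Proposition~\ref{prop:ratB}(4); your spectral-radius argument and appeal to \cite{DH1} match this exactly.
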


Unfortunately the converse to Theorem \ref{thm:joint} eludes us. The obstacle seems to be that the spectral radii of elements of the semigroup generated by $L[B]$ could all be strictly less than one, while the joint spectral radius $\whatsigma(L[B])$ could be equal to one.  This phenomenon can occur for two-by-two nonnegative real matrices \cite{MR2003315}.   Even for matrices with rational entries, such as those arising here, it is known to be algorithmically undecideable if the joint spectral radius is less than or equal to one.  There is substantial work on this and related topics; see \cite{Cicone:2015aa} for a survey.  Nevertheless, we make the following

\begin{conj}
\label{conj:converse}
Suppose a non-exceptional Hurwitz biset has the property that every element of $B^*$ is unobstructed.  Then $B$ is contracting. 
\end{conj}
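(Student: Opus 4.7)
The plan is to use Theorem~\ref{thm:main} to reduce the conjecture to showing that $\whatsigma(L[B]) < 1$ under the hypothesis that every element of $B^*$ is unobstructed. By Thurston's characterization, this hypothesis translates to the statement that for every composition $f \in B^*$ and every $f$-invariant multicurve, the associated Thurston matrix has spectral radius strictly less than one. The first step is to record the precise dictionary between spectral radii of products of generators in $L[B]$ and Thurston matrices of invariant multicurves for elements of $B^*$. This dictionary should be encoded by the strata scrambler: closed walks through a node $\sft \in \sfT$ assemble into Thurston matrices for compositions in $B^*$ admitting an invariant multicurve in the $\cG$-orbit of $\Gamma_\sft$, and conversely every such Thurston matrix should be realizable in this way.

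Granted this dictionary, I would invoke the Berger--Wang identity, which realizes $\whatsigma(L[B])$ as $\limsup_{n\to\infty}$ of the supremum of $\rho(M_1\cdots M_n)^{1/n}$ over length-$n$ products of generators $M_i \in L[B]$, and argue by contradiction. Supposing $\whatsigma(L[B]) \ge 1$ yields a sequence of products whose spectral radii tend to one. Since the strata scrambler has only finitely many nodes, a pigeonhole step would produce arbitrarily long closed walks based at a common node $\sft$ whose associated matrix products have spectral radii approaching one. The goal would then be to upgrade this sequence to a single closed walk---equivalently, a single element of $B^*$ with invariant multicurve in the $\cG$-orbit of $\Gamma_\sft$---whose Thurston matrix has spectral radius at least one, contradicting unobstructedness.

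The main obstacle is exactly this last upgrade. As the paper emphasizes, for general nonnegative rational matrices the generalized spectral radius (a supremum over spectral radii of finite products) can strictly miss the joint spectral radius, and the decision problem $\text{JSR} \le 1$ is algorithmically undecidable. To close the gap here one must exploit features of Hurwitz biset matrices beyond mere nonnegativity: the integrality of preimage multiplicities, the coherence of multicurve pullback under composition of Thurston maps, and the restrictive combinatorics imposed by having a finite vertex set $\sfT$ in the strata scrambler. I would hope that these constraints force the sup in the Berger--Wang formula to be asymptotically attained by genuinely periodic walks, so that the limsup can be replaced by an honest maximum over closed walks. An alternative and perhaps more geometric attack is to work instead on $\cT$: if the IFS on Teichm\"uller space is not contracting then some orbit descends to a divergent trajectory in $\Moduli$, and Mumford compactness forces the hyperbolic length of a simple closed curve along that trajectory to tend to zero; converting this geometric degeneration into a bona fide Thurston obstruction supported on a single element of $B^*$ is the dual form of, and apparently no easier than, the matrix-theoretic difficulty.
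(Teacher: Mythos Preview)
The statement you are attempting to prove is Conjecture~\ref{conj:converse}, which the paper explicitly leaves open; there is no proof in the paper to compare against. The paper states that ``the converse to Theorem~\ref{thm:joint} eludes us'' and identifies precisely the obstacle you name: the joint spectral radius can equal one while every finite product has spectral radius strictly less than one, even for two-by-two nonnegative matrices \cite{MR2003315}, and the decision problem $\whatsigma \le 1$ is undecidable for rational matrices.

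Your reduction via Theorem~\ref{thm:main} and the Berger--Wang identity is correct, and your dictionary between closed walks in the strata scrambler and Thurston matrices of invariant multicurves is exactly Proposition~\ref{prop:whenzero} combined with Proposition~\ref{prop:ratB}. But your proposal is a plan, not a proof: the crucial step---upgrading a sequence of closed walks with spectral radii tending to one into a single closed walk with spectral radius at least one---is precisely the finiteness property for the matrix set $L[B]$, and you have not established it. The features you list (integrality of degrees, finiteness of $\sfT$, coherence under composition) are real, but you give no argument that they force finiteness; indeed, the matrices in $L[B]$ are nonnegative rational and the paper already flags that this class does not in general enjoy the finiteness property. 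Your alternative geometric attack on $\cT$ is, as you yourself observe, the same difficulty in dual form: a divergent orbit in moduli space along an aperiodic coding ray need not produce an obstruction supported on a single element of $B^*$. Until one of these two gaps is closed, the conjecture remains open.
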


We can show Conjecture \ref{conj:converse} holds in special cases.

A \emph{topological polynomial} is a Thurston map possessing a fixed
critical point of maximal degree, corresponding to the point at
infinity of an ordinary polynomial.  A topological polynomial $f:
(S^2, P) \to (S^2, P)$ for which every cycle in $P$ contains a
critical point is conjugate-up-to-isotopy to a hyperbolic complex
polynomial, by the Berstein-Levy criterion \cite[Theorem 10.3.9]{MR3675959}. 

 In \S \ref{sec:examples}, we supply a proof of a folklore result on
the structure of completely invariant multicurves for topological
polynomials.  We then show as an application of Theorem\ref{thm:main}:

\begin{thm}\label{thm:toppoly}
Suppose $\#P \geq 4$ and $B$ is the Hurwitz biset of a topological polynomial $f:(S^2,P) \to (S^2, P)$ satisfying either of the following two conditions:
\begin{enumerate}
\item each cycle in $P$ contains a critical point;
\item there is a unique cycle in $P$ not containing a critical point, and the length of this cycle is equal to one.
\end{enumerate} 
Then $\widehat{\sigma}(L[B])<1$, so $B$ is contracting. 
\end{thm}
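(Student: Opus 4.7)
The plan is to invoke Theorem~\ref{thm:main}, reducing the problem to $\widehat{\sigma}(L[B])<1$, and then to analyze the finite strata scrambler from Section~\ref{subsecn:V}. Since the pure mapping class group acts trivially on $P$, any $F\in B^{\circ n}$ satisfies $F\vert_P = f^n\vert_P$, so the critical fixed point $c\in P$ of local degree $d\geq 2$ persists, and every element of $B^\ast$ is again a topological polynomial inheriting hypothesis (1) or (2).

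Under hypothesis (1), by the Berstein--Levy criterion every $F\in B^\ast$ is unobstructed. Combined with the folklore structure theorem from Section~\ref{sec:examples}---which classifies completely invariant multicurves of a topological polynomial in terms of its non-critical cycles in $P$---this yields the stronger statement that any non-trivial cycle in the strata scrambler corresponds to a completely invariant multicurve under some element of $B^\ast$, of which there are none. Hence the scrambler is essentially a DAG modulo zero-weight edges, and $\widehat{\sigma}(L[B])=0$. Under hypothesis (2), the non-critical fixed point $p\in P$ gives rise to at most one potentially recurrent multicurve orbit: the orbit of the curve $\gamma$ separating $p$ from the remaining $\#P-1\geq 3$ points. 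For each $\sfx\in\sfX$, the $1\times 1$ pullback matrix on this orbit has entry $1/d'$, where $d'$ is the degree of the component of $\sfx^{-1}(\gamma)$ enclosing $p$; by Riemann--Hurwitz on this disk, together with the constraint that the disk must contain at least one critical point (forced by the hypothesis on non-critical cycles and $\#P\geq 4$), I would deduce $d'\geq 2$. All other strata collapse as in case (1), so $\widehat{\sigma}(L[B])\leq 1/2 < 1$.

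The main obstacle is the one flagged in the discussion following Theorem~\ref{thm:joint}: strict inequality of spectral radii of cycle products does not automatically upgrade to strict inequality of the joint spectral radius. Here the rigidity of topological polynomials saves the day, as the folklore structure theorem reduces the scrambler to an essentially acyclic graph decorated by at most one $1\times 1$ SCC with substochastic weight. The most delicate technical step is justifying the reduction of the joint spectral radius to this single SCC's spectral radius---this requires a careful matching between SCCs of the strata scrambler and eventually-invariant multicurve families under compositions in $B^\ast$, and checking that no genuinely higher-dimensional recurrent behavior can sneak in through the interplay between distinct basis elements $\sfx\in\sfX$.
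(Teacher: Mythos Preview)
Your proposal has a genuine gap: the structural claim about the strata scrambler is false. Under hypothesis (1), you assert that the folklore theorem implies there are \emph{no} completely invariant multicurves for any $F\in B^*$, hence no nontrivial cycles in $\cS[B]$, hence $\widehat{\sigma}(L[B])=0$. But the rabbit polynomial, worked out in Example~\ref{ex:dendrite}, satisfies hypothesis (1) and its scrambler has a $3$-cycle $c\to b\to a\to c$ with weights $[1],[1/2],[1/2]$. What the folklore result (Proposition~\ref{prop:polygamma} and Theorem~\ref{thm:Lforpoly}) actually says is that for a topological polynomial satisfying (1) or (2), every completely invariant multicurve $\Gamma$ has $\sigma(L[F,\Gamma])<1$; it does \emph{not} say such $\Gamma$ fail to exist. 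You have conflated ``unobstructed'' with ``no invariant multicurves.'' The same misreading invalidates your case-(2) analysis: nothing forces the recurrent part of the scrambler to be a single $1\times 1$ block; multicurves with several components and curves encircling critical cycles can and do recur. (Separately, the curve you describe---separating the fixed point $p$ alone from the remaining $\#P-1$ points---is peripheral, hence not a curve in $\mc$ at all.)

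The paper's proof takes a different route. Rather than arguing about the shape of the scrambler, it takes an arbitrary closed edge-path $a_1,\ldots,a_n$ in $\cS[B]$, realizes the product as $L[F,\Gamma]$ for some $F\in B^{\circ n}$ with $F^{-1}(\Gamma)=\Gamma$ (via Proposition~\ref{prop:whenzero}(2)), and then applies the structure theorem for completely invariant multicurves of topological polynomials (Proposition~\ref{prop:polygamma}, especially part (6)) to obtain the uniform bound $\sigma(a_n\circ\cdots\circ a_1)<2^{-1/\#P}$, independent of $n$. You correctly flagged the passage from cycle spectral radii to joint spectral radius as the delicate step; the paper's resolution is this uniform bound on \emph{all} cycle products of \emph{all} lengths, not a reduction of the scrambler to a DAG.
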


Theorem \ref{thm:toppoly}, and its consequences when combined with
Theorem \ref{thm:main}, are related to previous work, which we next
describe.

Work of Buff-Epstein-Koch \cite{koch:endos} shows that in the setting of Case 1 of Theorem \ref{thm:toppoly}, for the special case of polynomials with the property that each critical point is periodic, the algebraic correspondence on moduli space extends to an endomorphism of projective space, the algebraic degeneracy locus is the postcritical locus of this endomorphism, its complement is Kobayashi hyperbolic, and the dynamics on the degeneracy locus is repelling.   This yields compactness of the limit space of the algebraic correspondence, and by standard arguments, contraction of its Hurwitz biset $B$.  

In \cite[\S 7]{bartholdi:nekrashevych:twisted}, it is shown that the correspondence on moduli space induced by a quadratic polynomial $f$ for which the unique finite critical point maps to a repelling fixed-point after two iterations is, up to conjugacy, given by a hyperbolic rational function $g$.  It follows that the biset $B$ of $f$ is contracting.  This is a special case of the setting of Case (2) of Theorem \ref{thm:toppoly}. 

Nekrashevych \cite[Thm. 7.2]{MR3199801} notes that the biset $B$ for an arbitrary hyperbolic polynomial $f$ is contracting, by building an affine contracting model.  However, the arguments given there do not yield contraction of the IFS on Teichm\"uller space. 

\subsection{Contraction and the conjugacy problem} Suppose $B$ is a Hurwitz biset of Thurston maps, as above.  Two elements $f_1, f_2\in B$ are called \emph{conjugate} if there exists $g \in \cG$ with $g\circ f_1 = f_2 \circ g$.  This is almost, but not quite, equivalent to conjugacy-up-to-isotopy relative to $P$, since we are restricting to the pure mapping class group $\cG$, and a general conjugacy-up-to-isotopy might nontrivially permute the elements of $P$. If $B$ is contracting, then given any pair of elements of $B$, iteration of the so-called \emph{symbolic pullback algorithm} reduces in logarithmic time the problem of deciding when two elements $f_1, f_2$  are conjugate to the analogous problem where now $f_1, f_2$ lie in a finite set comprising a global attractor for this algorithm \cite[\S 7.2]{MR4213769}. It is therefore of interest to understand when a Hurwitz biset $B$ of Thurston maps is contracting over $\cG$. 

\subsection{Rationality and the diversity of conjugacy classes}   
One  motivation for our study of Hurwitz bisets $B$ is the diverse range of behavior, as $B$ varies, for the set of conjugacy classes in $B$.  The examples at the end of Section~\ref{secn:hb} illustrate various possibilities.   

When $B$ is contracting, every element $f$ of $B$, and indeed every
element of $B^*$, is unobstructed (Proposition \ref{prop:ratB}).  Since $B$ is assumed non-exceptional, for each $n$, there are only finitely many conjugacy classes in each $B^{\circ n}$.  This is reminiscent of the well-known fact that apart from a few low-complexity cases, the mapping class group of a closed surface marked at finitely many points contains only finitely many conjugacy classes which are realizable by holomorphic automorphisms with respect to some complex structure on the surface \cite[\S 7.2]{fm}.

When $B$ is not contracting, the distribution and nature of conjugacy classes in $B$ can be rich and varied. For example, the composition of two unobstructed elements of a Hurwitz biset
$B$ need not be unobstructed (Example \ref{ex:rnc}).  Even if every
element of a Hurwitz biset $B$ is rational, there can be obstructed
elements in $B^*$ (Examples \ref{ex:cubicpoly} and \ref{ex:fixed}). 

\subsection{Notes} 

\begin{enumerate}
\item Ramadas \cite{ramadas2025thurstonobstructionstropicalgeometry} formulates a tropical notion of a closely related correspondence on moduli space to the one considered here.  The tropical spaces involved are polyhedral objects obtained by gluing convex cones together along faces. The tropical moduli space correspondence constructed yields a piecewise linear multivalued action on the tropical moduli space. Proposition 7.8 there asserts that every locally-defined, single-valued brranch has the same matrix as some Thurston linear transformation. It seems that this tropical correspondence contains essentially the same data as the scrambler considered here.

\item The three conditions in Theorem \ref{thm:main} are manifestations of a strong, uniform hyperbolicity property. It is tempting to look for mild relaxations of this condition.

A natural algebraic notion of subhyperbolicity is the property that the faithful quotient of the iterated monodromy group action associated to a biset $B$ over $\cG$ is contracting; see \cite[\S 3.6]{nekrashevych:combinatorics}.   A natural analytic notion of subhyperbolicity is that the IFS on Teichm\"uller space be uniformly contracting with respect to a perhaps incomplete suitable orbifold metric compatible with its topology.  When $\#P=4$, there are close relationships between these properties.  
\end{enumerate}

\subsection{Outline}

\begin{itemize}

\item \S \ref{secn:bisets} reviews background from the general theory of
bisets, and proves some preparatory results needed later.

\item \S \ref{secn:hb} applies the constructions in \S \ref{secn:bisets} to the setting of Hurwitz
bisets.  The IFS on $\cG$ is defined.  Two technical subtleties emerge.  First, it turns out to be more natural to look at the opposite group ${\op \cG}$, so that the free action is a left action.  Second, to handle composition, we technically work
in $B^{\otimes n}$--a related biset in which the compositions
remember, to some extent, the factorization of elements into $n$-fold
compositions of elements of $B$.  
Examples~\ref{ex:euclidean} and~\ref{ex:degree3} give Hurwitz
bisets for which $B^{\otimes 2} \neq B^{\circ 2}$. Examples \ref{ex:finrat}, \ref{ex:finratfinnonrat},
\ref{ex:finratinfnonrat}, and \ref{ex:infnonrat} showcase the diversity of deployment of conjugacy classes. 

\item  \S \ref{secn:scrambler} defines the IFS on $\cV$ and discusses the joint spectral radius.

\item \S \ref{secn:twistgroups} gives the proof that contraction on
$\cG$ (condition (1)) implies contraction on $\cV$ (condition (3)) in
Theorem \ref{thm:main}.

\item \S \ref{sec:corr} introduces the IFS on Teichm\"uller space and the algebraic correspondences on moduli space induced by a Hurwitz biset.  A key perspective is that upon identifying $S^2$ with the complex projective line $\Pone$, a Hurwitz biset whose elements are maps is canonically identified with
one whose elements are certain paths in moduli space.  The latter interpretation is needed to analyze the IFS on Teichm\"uller space via a coding tree. It concludes with the proof, following standard arguments, that contraction on $\cT$ (condition (2)) implies contraction on $\cG$ (condition (1)) in Theorem \ref{thm:main}. 

\item \S \ref{secn:3to1} completes the proof of Theorem \ref{thm:main} by showing contraction on $\cV$ (condition (3)) implies contraction on $\cT$ (condition (2)), using
both results of Selinger \cite{selinger:pullback} and the machinery of \S \ref{sec:corr}.  
\end{itemize}

\subsection{Notation and conventions.}  All maps between surfaces are assumed to be orientation-preserving. Composition of maps is written $f \circ g$ where $g$ is performed first. Concatenation of paths is written $\alpha \cdot \beta$ so that $\alpha$ is traversed first. The action of a group element $g$ on an element $x$ of a set $X$ is written either $g.x$ or $x.g$ depending on whether we consider left or right actions.

\subsection{Acknowledgements} We thank L. Bartholdi for useful conversations.  Kevin M. Pilgrim acknowledges support from NSF DMS grant no 2302907, Universit\"at Saarlandes, and the Max-Planck Institut f\"ur Mathematik Bonn.

\section{Bisets}
\label{secn:bisets}

We briefly collect a few facts and definitions; see \cite{nekrashevych:book:selfsimilar} (where the term ``permutational bimodule'' is used instead of biset), \cite{MR4475129},  and \cite[\S 3]{MR3852445}.

Suppose $G, H$ are groups, and $B$ denotes now an arbitrary set. A $(G,H)-$biset is a set $B$ with a pair of
commuting actions, of $G$ on the left, and $H$ on the right; written
$gbh$ where $g \in G$, $h\in H$ and $b \in B$. Here we are exclusively
concerned with the case $G=H$, and call the set $B$ equipped with
these actions a $G$-biset for short. A $G$-biset is \emph{left-covering} if
the left action is free with finitely many orbits. A biset is
\emph{irreducible} if for any $b \in B$, we have $\{g_0bg_1: g_0, g_1
\in G\}=B$.  We deal exclusively with countable groups and countable
bisets here, unless otherwise mentioned.  

We prefer to deal with left-covering bisets.  Later, this will introduce some lexicographic complications.

A left-covering biset has a finite \emph{basis} $\sfX \subset B$ so
that each $f \in B$ is uniquely expressible as $f=g\sfx$ for some $g
\in G, \sfx \in \sfX$.  Assuming that $B$ is also irreducible, $\sfX$ may
be computed as follows.  Since $B$ is a covering biset, there are
finitely many, say $D$, left orbits.  The group $G$ acts transitively
on the right on the finite set of left orbits. Suppose $f \in B$ is
arbitrary. Set $G_f:=\text{Stab}_G G.f$; thus for each $g \in G_f$ we
have $f g = h f$ for some unique $h \in G$.  Choose coset
representatives $g_1,\dotsc,g_D$ so that $G=G_f g_1 \sqcup \cdots
\sqcup G_f g_D$.  Then $\{\sfx_i:=f g_i: i=1,\dotsc,D\}$ is a basis of
$B$.

A basis determines a finite collection of operators on $G$.  Given
$\sfx \in \sfX$ and $g \in G$, we have $\sfx g = h \sfy$ for unique $h
\in G$ and $\sfy \in \sfX$; we set $\sfx @g := h$.  The collection of
operators \[ g \mapsto \sfx @g, \; \sfx \in \sfX, \; g \in G\] then
defines an iterated function system (IFS) on $G$.  The dynamics of
this IFS will be of fundamental importance.

Bisets may be iterated:
\[ B^{\otimes n}:=\{f_1 \otimes f_{2}\otimes \cdots \otimes f_n : f_i \in B\}; \]
here the elements are equivalence classes in which 
\[ (f_1, f_{2}, \ldots, f_n) \sim (f_1 g_1, g_1^{-1}f_2g_2, \ldots, g_{n-2}^{-1}f_{n-1}g_{n-1}, g_{n-1}^{-1}f_n), \; g_i \in G,\]
and the left- and right-actions of an element $g$ are induced by
multiplication of $f_1$ on the left and of $f_n$ on the right,
respectively. If $\sfX$ is a basis of $B$ then $\sfX^{\otimes
n}=\{\sfx_1\otimes \cdots \otimes \sfx_n : \sfx_i \in \sfX\}$ is a
basis for $B^{\otimes n}$ and this may be identified with the set
$\sfX^n$ of words of length $n$ in the alphabet $\sfX$.  
Setting $\emptyset $ to be the empty word, with the operation of
concatenation of words, the set $\sfX$ generates a semigroup $\sfX^* =
\cup_{n\geq 0}\sfX^n$; for $\sfv \in \sfX^*$, we denote by $|\sfv|$
the length of $\sfv$.  Any covering biset also generates a semigroup
$B^{\otimes *}$ under taking tensor powers.

The semigroup $\sfX^*$ acts on the left on $G$, so that $(\sfx \cdots \sfy)@g=\sfx @\cdots (\sfy@g)$, yielding an IFS on $G$.

We will later need  some related notation.  For $H, K \subset G$ and $n \in \N$, we set 
\[ HK : = \{hk : h \in H, k \in K\}\subset G,\]
\[ \sfX^n H:=\{\sfv h: \sfv \in \sfX^n, h \in H\} \subset
B^{\otimes n},\]
\[ H\sfX^n:=\{h \sfv : h \in H, \sfv \in \sfX^n\} \subset B^{\otimes n},\]
\[ \sfX^n(H):=\{\sfv@h : \sfv \in \sfX^k, h \in H\}\subset G.\]
Note that
\[ \sfX^n(HK) \subset \sfX^n(H)\sfX^n(K).\]

\begin{defn}
\label{defn:contractingbiset}
The $G$-biset $B$ is \emph{contracting (on $G$)} or \emph{hyperbolic} if for some (equivalently, any) basis $\sfX \subset B$, there exists a finite set $\cN \subset G$ such that for every $g \in G$ and every $n \in \N$ sufficiently large, we have $\sfX^ng \subset \cN\sfX^n$.
\end{defn}

In other words, the biset $B$ is contracting on $G$ if and only if the iterated function system on $G$, defined above,
has the property that all orbits eventually land in a finite set $\cN$, i.e. is contracting in the sense of Definition \ref{defn:contracting}. 

Similar facts hold for right-free covering bisets, \emph{mutatis mutandis}. 

\section{Hurwitz bisets and IFS on $\cG$}
\label{secn:hb}

Fix a finite subset $P \subset S^2$ with $\#P \geq 3$.  Let $\cG$ denote
the pure mapping class group $\PMod(S^2, P)$, and let $B$ be a Hurwitz
class of Thurston maps as defined in \S \ref{secn:intro}.  With the operations of pre- and post-composition, $B$ is naturally a $\cG$-biset.   Choosing a basis $\sfX$ for $B$, we obtain an IFS on $\cG$ generated by the operators $\{g \mapsto \sfx@g : \sfx \in \sfX\}$.

The Hurwitz $\cG$-biset $B$ has a technical defect:  the right
action by pre-composition is free, instead of the left action as per
our preference in \S \ref{secn:bisets}.  We resolve this issue by
introducing the notion of the opposite group.  This will later be
natural, since the mapping class group, viewed as the fundamental
group of moduli space, acts by pre-concatenation; see \S \ref{sec:corr}.

\subsection*{Opposite groups.} For a group $H$ we define the \emph{opposite group} ${\op H}$ as follows.
As a set ${\op H}=H$.  The operation $\cdot $ of ${\op H}$ is defined so
that $g\cdot h=h\circ g$, where $g,h\in {\op H}$ and $\circ $ is the
operation of $H$.  One readily verifies that this definition makes
${\op H}$ a group anti-isomorphic to $H$ and $\op({\op H})$ is naturally isomorphic
to $H$.

Given a finite subset $P \subset S^2$, we denote by ${\op \cG}:={\op\PMod(S^2, P)}$ the pure mapping class group, equipped with its opposite group structure. We are interested in ${\op \cG}$ because later (\S \ref{subsec:mcgact}) we model the action of $\cG$ on
Teichm\"{u}ller space via pullback of complex structures, so that we obtain a left action of ${\op \cG}$ on Teichm\"uller space.  

\subsection*{Opposite Hurwitz biset.} Similarly, Thurston maps induce a map on Teichm\"uller space by pulling back complex structures (\S \ref{subsec:liftB}).  To turn this into a left action of a semigroup, we define the \emph{opposite Hurwitz biset} ${\op B}$ as the ${\op \cG}$-biset whose underlying set is $B$ and in which the operations are given by 
\[ g_1 \cdot f \cdot g_0 :=g_0 \circ f \circ g_1, \quad f \in B, \; g_0, g_1 \in {\op \cG}.\]
So ${\op B}$ is a ${\op \cG}$-biset with the conventions in
Section~\ref{secn:bisets}. The $\cG$-biset $B$ is contracting if and only if the ${\op \cG}$-biset ${\op B}$ is contracting.

\subsection*{Iteration.}  Iteration leads to several closely related, but distinct, bisets.  Fix $n \in \N, n \geq  2$.  

The simplest one is the $\cG$-biset 
\[ B^{\circ n} := \{ f_1 \circ \cdots \circ f_n: f_i \in B\}\]
with the action of $\cG$ by pre- and post-composition. As we previously mentioned, however, this is right-free, not left-free.  But, as is the case with $B$ itself, we may view $B^{\circ n}$ as an ${\op \cG}$-biset as well. 

Later, we want elements of $B$ to ``act'' on the left on various sets of objects (multicurves, Teichm\"uller space, etc.) via pullback.  
It is therefore natural to look also at 
\[ {\op B}^{\circ n}: = \{ f_1 \cdots f_n := f_n \circ \cdots \circ f_1 : f_i \in {\op B}\}\]
with the action of ${\op \cG}$ given by 
\[ g_1\cdot (f_1 \cdots f_n)\cdot g_0:=g_0 \circ (f_n \circ \cdots \circ f_1)\circ g_1.\]
so that again the left action is free. 

Given a basis $\sfX$ of $B$, we wish $\sfX^n$ to be a basis of $B^n$, and for this, it is necessary to work with tensor products, and not just compositions, as we will shortly show. So yet another iterate is the $\cG$-biset 
\[ B^{\otimes n}:=\{ f_1 \otimes \cdots \otimes f_n : f_i \in B\}.\]

And finally--our main focus--is 
\[ {\op B}^{\otimes n}:= \{ f_1 \otimes \cdots \otimes f_n :  f_1, \ldots, f_n \in {\op B}\}\]
 as defined in \S \ref{secn:bisets}.  We emphasize that this tensor power is relative to ${\op \cG}$, not $\cG$.
 
 \subsection*{Semigroup}
\label{subsec:semigroup} By taking disjoint unions, we obtain a semigroup 
\[ {\op B}^{\otimes *}:=\bigsqcup_{n=1}^\infty {\op B}^{\otimes n}\]
with a natural multiplicative grading by the degree of the map, which in addition has a natural structure of an ${\op \cG}$-biset. 

\subsection*{Tensor products versus composition} 
To keep the discussion more elementary, we focus on the $\cG$-bisets $B^{\circ n}$ and $B^{\otimes n}$, rather than on their opposites.  

There is a natural $\cG$-bi-equivariant map of $\cG$-bisets $B^{\otimes n} \to
B^{\circ n}$ given by 
\[ f_1\otimes f_{2}\otimes \cdots \otimes f_n \mapsto f_1 \circ \cdots \circ f_n.\]
While obviously surjective,  it can fail to be injective.  The following lemma will be used to present two examples
of this phenomenon. 

\begin{lem}\label{lemma:liftables} Let $f$ be a Thurston map for which
the map of $\cG$-bisets $B^{\otimes 2}\to B^2$ is injective, hence an isomorphism.  Then
  \begin{equation*}
\cG_{f^{\circ 2}}=\{a\in \cG_f : f@a\in \cG_f\}.
  \end{equation*}
\end{lem}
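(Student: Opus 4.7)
The plan is to verify both inclusions separately, with the tensor-versus-composition hypothesis being needed only for one direction.

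For the easy inclusion $\supseteq$: suppose $a \in \cG_f$ with $f@a \in \cG_f$. By definition of the $@$-operator, $f \circ a = (f@a) \circ f$ and $f \circ (f@a) = (f@(f@a))\circ f$, whence
\[ f \circ f \circ a \;=\; f \circ (f@a) \circ f \;=\; \bigl(f@(f@a)\bigr) \circ f \circ f,\]
so $a$ stabilizes the left $\cG$-orbit of $f \circ f$, that is, $a \in \cG_{f\circ f}$, with $(f\circ f)@a = f@(f@a)$.

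For the delicate inclusion $\subseteq$: let $a \in \cG_{f\circ f}$, so $f\circ f \circ a = h \circ f \circ f$ for some $h \in \cG$. The key observation is that both sides of this relation are images, under the natural surjection $B^{\otimes 2} \to B^{\circ 2}$, of two easily identified tensors:
\[ f \otimes (f\circ a) \;\longmapsto\; f\circ f \circ a, \qquad (h\circ f)\otimes f \;\longmapsto\; h\circ f\circ f.\]
By the injectivity hypothesis, $f \otimes (f\circ a) = (h\circ f)\otimes f$ in $B^{\otimes 2}$. Unwinding the equivalence relation $(f_1, f_2) \sim (f_1 g, g^{-1} f_2)$ defining $B^{\otimes 2}$, we obtain some $g \in \cG$ with $h\circ f = f\circ g$ and $f\circ a = g\circ f$. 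The first relation says $g \in \cG_f$ with $f@g = h$; the second says $a \in \cG_f$ with $f@a = g$. Combining these, $f@a = g \in \cG_f$, which is exactly the condition for membership of $a$ in the right-hand set.

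The main obstacle is conceptual rather than technical: one must correctly reformulate the orbit-stabilizer equation $(f\circ f)\circ a = h \circ (f\circ f)$ as an equality between two explicit tensors, so that the hypothesis on $B^{\otimes 2} \to B^{\circ 2}$ can be deployed. Once this bridge is built, the remaining work is a short bookkeeping exercise with the tensor-product equivalence relation, and no further biset-theoretic input is needed.
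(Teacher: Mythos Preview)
Your strategy matches the paper's: verify $\supseteq$ directly, and for $\subseteq$ lift the composition identity to $B^{\otimes 2}$ via the injectivity hypothesis, then extract the two $\cG_f$-conditions. The one issue is a systematic reversal of composition order: for the Hurwitz $\cG$-biset it is the \emph{right} (pre-composition) action that is free, so $a\in\cG_f$ reads $a\circ f = f\circ (f@a)$, not $f\circ a = (f@a)\circ f$, and the starting relation for $\subseteq$ should be $a\circ f\circ f = f\circ f\circ h$. With all compositions flipped your argument is correct.

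There is a minor genuine variation in how the tensor equality is unpacked. The paper first writes $a\circ f = f_1\circ b_1$ and $b_1\circ f = f_2\circ b_2$ via the basis $\sfX$, obtains $(f\otimes f)\cdot b = (f_1\otimes f_2)\cdot b_2$ in $B^{\otimes 2}$, and then uses that $\sfX^{\otimes 2}$ is a basis together with right-freeness to force $f_1=f_2=f$. You instead compare $(a\circ f)\otimes f$ with $f\otimes(f\circ h)$ directly and pull the intermediate $g$ straight from the defining equivalence $(f_1,f_2)\sim(f_1 g,\,g^{-1}f_2)$, which is already an equivalence relation. Your route bypasses any appeal to a basis of $B^{\otimes 2}$ and is slightly more direct; the paper's route makes the role of the basis and of right-freeness explicit.
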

  \begin{proof} Extend $f$ to a basis $\sfX$ of $B$. By  \cite[Prop. 2.3.2]{nekrashevych:book:selfsimilar}, $\sfX \otimes \sfX$ is a basis of $B^{\otimes 2}$.  
  
   Let $a\in \cG_{f^{\circ 2}}$.  Then there exists
$b\in \cG$ such that $a \circ (f\circ f)= (f\circ f)\circ b$.  On the
other hand $a \circ f =f_1 \circ b_1$ for some $b_1\in \cG$ and some
$f_1\in \sfX$.  Likewise $b_1\circ f=f_2\circ b_2$ for
some $b_2\in \cG$ and $f_2\in \sfX$.  Hence
  \begin{equation*}
f \circ (f \circ b) =a \circ f \circ f =f_1\circ (f_2\circ b_2).
  \end{equation*}
Because $B^{\otimes 2}\to B^2$ is injective, this means that
  \begin{equation*}
(f \otimes f)\cdot b = f\otimes (f\circ b) = f_1 \otimes (f_2 \circ b_2)= (f_1 \otimes f_2)\cdot b_2.
  \end{equation*}
Since $\sfX^{\otimes 2}$ is a basis of $B^{\otimes 2}$ and the right action (!) is free, we have $f_1=f_2=f$ and $b=b_2$.  This and the equation $a \circ f =f_1 \circ b_1$ imply that $a\in \cG_f$ and $b_1=f@a$.  The equation $b_1 \circ f_2=f_1 \circ b_2$ even implies that $f@a \in \cG_f$.  Thus
  \begin{equation*}
\cG_{f^{\circ 2}}\subseteq \{a\in \cG_f:f@a\in \cG_f\}.
  \end{equation*}
The reverse inclusion is clear, and so the proof of the lemma is
complete. 

\end{proof}

\begin{ex}\label{ex:euclidean} \rm Let $f:S^2\to S^2$ be a Euclidean
NET map given by the matrix $A=\left[\begin{smallmatrix}0 & -1 \\ 2 &
0\end{smallmatrix}\right]$.  We identify $\cG$ with the subgroup of
$\text{PSL}(2,\Z)$ represented by matrices in the group $\Gamma(2)$
consisting of matrices which are congruent to the identity matrix modulo
2.  Then $\cG_f$ is the image in $\text{PSL}(2,\Z)$ of
$\Gamma(2)\cap (A\Gamma(2)A^{-1})$.  So $\cG_f$ does not contain the
image of $\left[\begin{smallmatrix}1 & 0 \\ 2 & 1
\end{smallmatrix}\right]$.  However, $f^{\circ 2}$ is represented by
the matrix $\left[\begin{smallmatrix}2 & 0 \\ 0 & 2
\end{smallmatrix}\right]$.  Clearly $\cG_{f^{\circ 2}}=\cG$.
Therefore $\cG_{f^{\circ 2}}\nsubseteq \cG_f$, and so
Lemma~\ref{lemma:liftables} implies that the map $B^{\otimes 2}\to
B^2$ is not injective.
\end{ex}

The previous example deals with Euclidean Thurston maps, making it
quite unsatisfying.  The next example shows that the same phenomenon
occurs for some maps with hyperbolic orbifolds.

\begin{ex}\label{ex:degree3} \rm Let $f$ be a Thurston map whose
postcritical set $P_f$ has exactly four points and whose homotopy type
is given by the finite subdivision rule in Figure~\ref{fig:fsrForf}.
\begin{figure}
\centerline{\scalebox{.6}{\includegraphics{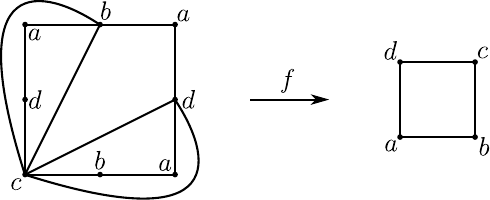}}}
\caption{A finite subdivision rule for the map $f$}
\label{fig:fsrForf}
  \end{figure}
We are using stereographic projection of the 2-sphere to the plane.
Every vertex label on the left indicates the image of the vertex in
the square pillowcase on the right.  The square on the left is the
same as the square on the right.

Let $\zg$ be a simple closed curve in $S^2-P_f$ with slope 2.  The
right half of Figure~\ref{fig:fsrForfPullback} shows two core arcs for
$\zg$, one blue joining $c$ and $d$ and one red joining $a$ and $b$.
The left half of Figure~\ref{fig:fsrForfPullback} shows the
$f$-preimages of these two core arcs.  Let $\zt$ be a primitive Dehn
twist about $\zg$.  \begin{figure}[h!]
\centerline{\scalebox{.5}{\includegraphics[width=4in]{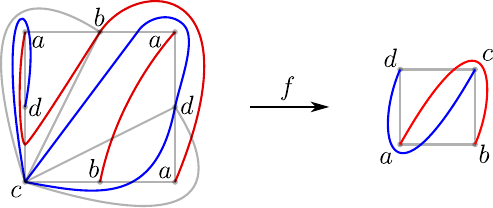}}}
\caption{The $f$-preimage of two core arcs for $\zg$}
\label{fig:fsrForfPullback}
  \end{figure}

We claim that $\zt^2\in \cG_f$.  To verify this, we view $\zt$ as
represented by a homeomorphism fixing pointwise a regular neighborhood
of $\zg$'s blue core arc and performing one rotation of a regular
neighborhood of $\zg$'s red core arc.  For the pullback, a regular
neighborhood of the blue connected component is fixed pointwise.
There are two red connected components, one mapping with degree 1 and
one mapping with degree 2.  The red connected component mapping with
degree 1 contains exactly one postcritical point, $c$.  Lifting
$\zt^2$ has the effect of inducing two complete rotations of a regular
neighborhood of this connected component.  These rotations can be
performed while fixing $c$.  Hence these rotations are homotopic to
the identity map in $S^2-P_f$.  For the red connected component
mapping with degree 2, the effect of $\zt^2$ is to perform one
complete rotation on a regular neighborhood.  This rotation represents
a Dehn twist about a simple closed curve with slope 1.  So $\zt^2\in
\cG_f$ and the image of $\zt^2$ under the pure modular group virtual
endomorphism is a primitive Dehn twist about a simple closed curve
with slope 1.  It is now easy to see that $\zt\notin \cG_f$, and so
$\zt^2$ generates the stabilizer of the homotopy class of $\zg$ in
$\cG_f$.

Now we consider the iterate $f^{\circ 2}$.  Figure~\ref{fig:fsrForfsq}
shows a finite subdivision rule for $f^{\circ 2}$.  
\begin{figure}
\centerline{\scalebox{.5}{\includegraphics[width=5in]{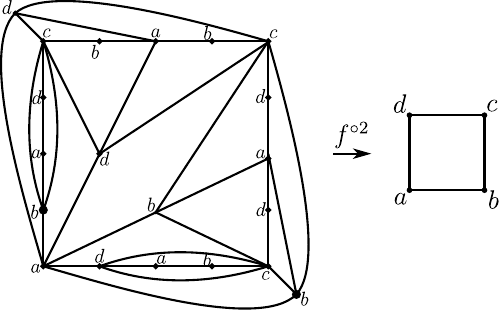}}}
\caption{A finite subdivision rule for the map $f^{\circ 2}$ }
\label{fig:fsrForfsq}
  \end{figure}

  \begin{figure}
\centerline{\scalebox{.4}{\includegraphics[width=7in]{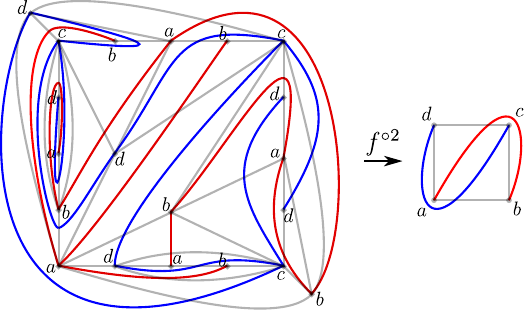}}}
\caption{The $f^{\circ 2}$-preimage of two core arcs for $\zg$  }
\label{fig:fsrForfsqPullback}
  \end{figure}

We claim that $\zt^3\in \cG_{f^{\circ 2}}$.  As for $f$, we view
$\zt$ as represented by a homeomorphism fixing pointwise a regular
neighborhood of the blue core arc in $S^2$ joining $c$ and $d$.  See
Figure~\ref{fig:fsrForfsqPullback}.  The $f^2$-preimage of the blue
arc is connected, and a regular neighborhood of it is fixed pointwise
when pulling back by $f^{\circ 2}$.  There are two red connected
components, one mapping with degree 3 and one mapping with degree 6.
The red connected component mapping with degree 3 contains exactly one
postcritical point, $a$.  Pulling back $\zt^3$ induces one complete
rotation of a regular neighborhood of this connected component.  This
rotation is homotopic to the identity map in $S^2-P_f$.  Pulling back
$\zt^3$ induces a half rotation of a regular neighborhood of the red
connected component mapping with degree 6.  This red connected
component contains no postcritical points, and this half rotation is
homotopic to the identity map in $S^2-P_f$.  Hence $\zt^3\in
\cG_{f^{\circ 2}}$ and $\zt^3$ lifts to the identity element of
$\cG$.

So we have that $\zt^2$ generates the stabilizer of the homotopy class
of $\zg$ in $\cG_f$ and that $\zt^3\in \cG_{f^{\circ 2}}$.  It
follows that $\cG_{f^{\circ 2}}$ is not contained in $\cG_f$.
Therefore Lemma~\ref{lemma:liftables} implies that the map $B^{\otimes2
}\to B^2$ is not injective.

\end{ex}

\subsection{Examples of Hurwitz bisets}
\label{subsec:examples}

We conclude this section with more examples.

\begin{ex}
\label{ex:rnc}Composition of unobstructed can be obstructed.  {\rm See \cite[\S 6]{bartholdi:nekrashevych:twisted} for details.  For consistency with the presentation there, we work in the semigroup ${\op B}^{\circ *}$.  Let $f_i(z)=z^2+i$, let $P:=P_f:=\{i, -1+i, -i, \infty\}$ denote the postcritical set of $f_i$, let ${\op B}$ be the ${\op \cG}$-Hurwitz biset determined by $f_i$, and $({\op B})^{\circ *}$ the corresponding semigroup.  Let $a$ denote the right Dehn twist about the boundary of a regular neighborhood of the Euclidean segment joining $i$ and $-i$.  Then since $a^2$ lifts under $f_i$ to the trivial element, we have $f_i \cdot a^2 = f_i$ in ${\op B}^{\circ *}$.  The map $f_i\cdot a$ is obstructed.  Since in ${\op B}^{\circ *}$ we also have 
\[ (f_i\cdot a) \circ (f_i\cdot a) = f_i \circ (a \cdot f_i \cdot
a^2 \cdot a^{-1}) = f_i \circ (a \cdot \text{id} \cdot f_i \cdot
a^{-1}) = f_i \circ (a \cdot f_i \cdot a^{-1}),\]
we see that in ${\op B}^{\circ *}$ the obstructed map $(f_i \cdot
a)\circ (f_i \cdot a)$ coincides with the composition of the unobstructed maps $f_i$ and $a^{-1}\cdot f_i \cdot a$.}
\end{ex}

A Hurwitz biset might have finitely many (even zero) or infinitely
many obstructed or unobstructed Thurston equivalence classes.  The
remaining examples illustrate various possibilities.  By the folklore
result in Corollary 3.5 of \cite{MR1749249}, every Hurwitz biset
contains only finitely many rational Thurston equivalence classes.

\begin{ex}\label{ex:finrat} Finitely many rational classes, no
obstructed classes. 
\begin{enumerate}
  \item The Hurwitz biset of the rabbit polynomial.  \emph{See
\cite{bartholdi:nekrashevych:twisted}.}
  \item Polynomials for which every cycle in the postcritical set
contains a critical point.  \emph{This is the Levy-Berstein theorem
\cite[Theorem 10.3.9]{MR3675959}.}
  \item Thurston maps with contracting mapping class group bisets.
\emph{This follows from Theorem~\ref{thm:joint}.}
  \item Certain critically fixed Thurston maps.  \emph{ See
\cite{cgnpp} and \cite{HP}.  For $d \geq 2$, suppose $2d-2=m_1+\cdots
+ m_n$ is a partition, with $m_i \leq d-1$ for each $i$, and $n \leq
d$. By \cite[Thm. 1.2]{cgnpp}, there is a connected planar multigraph
with $n$ vertices of valences $m_1, \ldots, m_n$. Blowing up the edges
of this multigraph gives a critically fixed Thurston map $f$ that is
equivalent to a rational map. The Hurwitz biset $B:=B(f)$ depends only
on the partition.  If every such graph realizing the given valences is
connected, then $B$ consists entirely of rational maps.  Given that
$2d-2=m_1+\cdots +m_n$, the condition that $m_i\le d-1$ is equivalent
to the condition that $m_i\le\sum_{j\ne i}m_j$.  So every such map is
connected if and only if the multiset $\{m_1, \ldots, m_n\}$ cannot be
split into two disjoint nonempty subsets $A_1, A_2$ such that for each
$i=1,2$, no element of $A_i$ strictly exceeds the sum of the other
elements.  For example, $2\cdot 6 - 2 = 4+2+2+1+1$ and $2\cdot 5-2 =
3+2+2+1$ each give non-polynomial examples.}


\end{enumerate}
\end{ex}

\begin{ex}\label{ex:finratfinnonrat} A mixture of finitely many
rational and finitely many obstructed classes. \emph{Here is a general
procedure for constructing such bisets.  Let $g\co (S^2,P_g)\to
(S^2,P_g)$ be a rational Thurston map with postcritical set $P_g$ of
order 4 and hyperbolic orbifold for which there exists a simple closed
curve $\zg$ in $S^2-P_g$ which is neither peripheral nor null
homotopic such that $g^{-1}(\zg)$ contains a connected component in
the same pure mapping class group orbit as $\zg$ (the strata scrambler
of $g$ contains a loop).  Let $h\co (S^2,P_g)\to (S^2,P_g)$ be a
Latt\`{e}s map given by a matrix $\left[\begin{smallmatrix}a & b \\ c
& -a\end{smallmatrix}\right]$, where $a$, $b$ and $c$ are integers
with $a$ odd and $b$, $c$ both even.  We may choose $h$ so that
$p:=\deg(h)$ is prime, necessarily odd.  Let $f=h\circ g$.  Every
multiplier for $f$ is a multiplier for $h$ times a multiplier for $g$.
Every multiplier of $h$ is either $p$ or $1/p$ and both occur.  So it
is easy to choose $h$ so that $f$ has a nonzero multiplier less than
1, a multiplier greater than 1 but no multiplier equal to 1.  Because
no multiplier equals 1, Theorem 5 in \cite{kmp:origami} implies that
the Hurwitz biset containing $f$ has only finitely many Thurston
equivalence classes.  Because the matrix of $h$ is congruent to the
identity modulo 2, the pullback map of $h$ on simple closed curves
takes every simple closed curve to a simple closed curve in the same
pure mapping class group orbit.  So the strata scrambler of $f$ has a
loop at some vertex.  Because multipliers of $h$ depend on congruences
modulo $p$ and edges of strata scramblers depend on congruences modulo
2, this loop for $f$ has a multiplier greater than 1.  It follows that
the Hurwitz biset of $f$ has an obstructed element.}
\end{ex}

\begin{ex}\label{ex:finratinfnonrat} A mixture of finitely many
rational and infinitely many obstructed classes.
\begin{enumerate}
  \item $z\mapsto z^2+i$. \emph{See \ref{ex:dendrite} and
\cite{bartholdi:nekrashevych:twisted}.}
  \item Critically fixed Thurston maps $f$ with at least four
postcritical points, with at most $\deg(f)$ postcritical points and
whose associated partition multiset can be realized by a disconnected
graph.  \emph{See (4) in Example~\ref{ex:finrat}.}
\end{enumerate} 
\end{ex}

\begin{ex}\label{ex:infnonrat} Infinitely many obstructed classes. 
\begin{enumerate}
  \item Basilica mated with basilica.
  \item Critically fixed Thurston maps $f$ with more postcritical points than
$\deg(f)$.  \emph{This follows from Theorem 1.1 in \cite{cgnpp}.}
\end{enumerate}
\end{ex}

\section{Strata scrambler and IFS on $\cV$} 
\label{secn:scrambler}

\subsection{Curves, multicurves, lifts, and Thurston linear maps} Fix
a finite subset $P \subset S^2$ with $\#P \geq 3$.  We denote by
$\curves$ the set of isotopy classes of simple, closed, unoriented
curves in $S^2-P$.  We call such a class a ``curve'', for simplicity.
A curve which is inessential or peripheral we call \emph{trivial}.
Thus e.g. when $\#P=3$ we have $\#\curves=4$.

A \emph{multicurve} is a possibly empty collection of curves
$\Gamma=\{\gamma_1, \ldots, \gamma_m\} \subset \curves$ such that each
$\gamma_i$ is nontrivial, $\gamma_i \neq \gamma_j$ for $i \neq j$
(that is, representatives of the $\gamma_i$'s are pairwise
non-isotopic), and the collection $\Gamma$ is simultaneously
representable by pairwise disjoint elements.  We denote by $\mc$ the
set of multicurves.

The group $\cG$ acts on $\mc$ via the right pullback action with
finitely many orbits.  So ${\op \cG}$ acts on the left by pullback with
finitely many orbits:
\[ (g\cdot h).\Gamma= (g\cdot h)^{-1}(\Gamma)=(h^{-1}\cdot
g^{-1})(\Gamma)=g^{-1}\circ
h^{-1}(\Gamma)=g^{-1}(h^{-1}(\Gamma))=g.(h.\Gamma).\]
The empty multicurve $\emptyset$ is a globally fixed element. We
denote by ${\op \cG}.\Gamma$ the orbit of $\Gamma$ under ${\op \cG}$.

Let $B$ be a Hurwitz class of Thurston maps relative to $P$.  An
element $f \in B$ induces, via pullback, an
at-most-one-to-$\deg(f)$-relation on $\curves$: for $\gamma,
\widetilde{\gamma} \in \curves$, we denote by $\gamma \leftarrow
\widetilde{\gamma}$ whenever a component of the $f$-pullback of a
representative of $\gamma$ represents $\widetilde{\gamma}$.  We let
$f^{-1}(\gamma)$ denote the set of all such curves
$\widetilde{\gamma}$.  Analogously, there is a well-defined function
on multicurves induced by pullback, which we write as $\Gamma
\leftarrow f^{-1}(\Gamma)$.  More precisely, \[ f^{-1}(\Gamma) :=
\{\widetilde{\gamma}\; \text{nontrivial}: \exists \gamma \in \Gamma,
\gamma \leftarrow \widetilde{\gamma}\}.\] In particular: (i)
$f^{-1}(\emptyset) = \emptyset$, i.e. the empty multicurve pulls back
to itself, (ii) if every preimage of every element of $\Gamma$ is
trivial, then $f^{-1}(\Gamma)=\emptyset$, and (iii) we do not allow
trivial curves to be elements of $f^{-1}(\Gamma)$.  Note that the
pullback relation on curves allows trivial curves in the inverse
image, while the pullback function on multicurves does not. This is
for technical convenience.  A nonempty multicurve is called
\emph{completely invariant} if $f^{-1}(\Gamma)=\Gamma$.

For a nontrivial $\Gamma \in \mc$, we denote by $\R[\Gamma]$ the free
$\R$-module spanned by $\Gamma$; in particular, this is a real vector
space of dimension $\#\Gamma$ equipped with the basis $\Gamma$.  We
set $\R[\Gamma]=\{0\}$, the zero-dimensional real vector space, if
$\Gamma$ is the empty multicurve.  Note that since $\cG$ is the pure
mapping class group, if $g \in {\op \cG}$ stabilizes $\Gamma \in \mc $
setwise, then $g.\gamma = \gamma$ for each $\gamma \in \Gamma$. It
follows that if ${\op \cG}.\Gamma_1={\op \cG}.\Gamma_2$ then the
vector spaces $\R[\Gamma_1]$ and $\R[\Gamma_2]$ are canonically
identified.  If $f \in B$ and $\Gamma \in \mc(S^2,P)$ is nontrivial,
then there is an induced linear map 
\[ L[f,\Gamma]: \R[\Gamma]\to\R[f^{-1}(\Gamma)]\] 
defined on basis elements $\gamma\in \Gamma$ by
choosing a representative $\delta\in \gamma$ and setting 
\[L[f,\Gamma](\gamma)= \sum_{\substack{\gamma \leftarrow
\widetilde{\gamma}\\ \widetilde{\gamma}\; \text{nontrivial}}}\;
\sum_{\substack{\widetilde{\delta} \subset f^{-1}(\delta)\\
\widetilde{\delta} \in\widetilde{\gamma}}}
\frac{1}{\deg(f:\widetilde{\delta} \to \delta)}\widetilde{\gamma}.\]
If $\Gamma=\emptyset$ we let $L[f,\Gamma]: \{0\} \to \{0\}$ be the
zero map.  Note that $f^{-1}(\Gamma)=\emptyset$ if and only if
$L[f,\Gamma] \equiv 0$.

\subsection{Linear iterated function system on a vector space}
\label{subsecn:ifs}
Our goal is to define a finite invariant of $B$ which organizes all
such linear maps into a dynamical system that respects composition in
the semigroup $B^*$.  Informally, we proceed as follows.   We begin
with the finite directed graph with vertex set ${\op \cG}\backslash \mc$ and a
directed edge ${\op \cG}.\Gamma\rightarrow {\op \cG}.\widetilde{\Gamma}$
 if and only if for some $\Gamma' \in
{\op \cG}.\Gamma$  we have $f^{-1}(\Gamma' )\in
{\op \cG}.\widetilde{\Gamma}$; this edge is equipped with a weight--in the
form of a linear map between finite-dimensional $\R$-vector
spaces--given by $L[f,\Gamma']:\R[\Gamma'] \to \R[f^{-1}(\Gamma')]$.

This almost works--except that the domains and codomains are drawn from an infinite set.  We resolve this using the canonical identifications. A by-product of the analysis is that this invariant of $B$ may in principle be computed in finite time. 

Let $\sfX$ be a basis for the biset $B$; $t$o emphasize the fact that
elements of $\sfX$ are maps, we denote the elements of $\sfX$ by $f_{\sfx}$.  Let $\sfT$ denote an orbit transversal for the action of ${\op \cG}$ on $ \mc$; we similarly denote its elements by $\Gamma_{\sft}$. Given $\Gamma_{\sft} \in \sfT$ and $f_{\sfx} \in \sfX$, there exists a unique $\Gamma_{\tilde{\sft}} \in \sfT$ and a non-unique $h=h(f_\sfx, \Gamma_{\sft}) \in {\op \cG}$ with $f_{\sfx}^{-1}(\Gamma_{\sft}) =h(\Gamma_{\tilde{\sft}})$.  For each pair $(f_\sfx, \Gamma_{\sft}) \in \sfX \times \sfT$, we choose and fix such an $h=h(f_\sfx, \Gamma_{\sft})$. The induced linear map $L[f_{\sfx}\circ h,\Gamma_{\sft}] :\R[\Gamma_{\sft}] \to \R[\Gamma_{\tilde{\sft}}]$ is independent of the choice of $h$.  To resolve the finiteness issue, we create a directed weighted edge from $\Gamma_{\sft}$ to $\Gamma_{\tilde{\sft}}$ only for ordered pairs $(f_\sfx, \Gamma_{\sft})$ in the finite set $\sfX \times \sfT$ via the procedure just described. 

Let us now see that we do not lose any information with this restriction.  Fix $f \in B$ and $\Gamma \in  \mc$.  Consider next the diagram below:
 \[
 \xymatrix{
(S^2, P, f^{-1}(\Gamma)) \ar[d]_f \ar[r]_{k}  \ar@/^1pc/[rr]^{\tilde{g}}&(S^2, P, f_{\sfx}^{-1}(\Gamma_{\sft})) \ar[d]^{f_{\sfx}} \ar[r]_(.57){h^{-1}}&(S^2, P, \Gamma_{\tilde{\sft}}) \ar[d]^{f_{\sfx}\circ h}\\
(S^2, P, \Gamma) \ar[r]^{g} \ar@/_1pc/[rr]_{g}&(S^2, P, \Gamma_{\sft}) \ar[r]^{\text{id}}& (S^2, P, \Gamma_{\sft})
}
 \]
Since $\sfT$ is an orbit transversal, there exists a (non-unique) $g \in {\op \cG}$ and unique $\Gamma_{\sft} \in \sfT$ with $g(\Gamma)=\Gamma_{\sft}$.  Since $B$ is a covering biset and $\sfX$ is a basis, there are unique $k \in {\op \cG}$ and $f_{\sfx} \in \sfX$ such that $g \circ f = f_{\sfx} \circ k$ in $B$, so the left-hand square commutes. The right-hand square commutes by the previous paragraph. So since the identifications of the vector spaces spanned by multicurves are canonical, the finite set of linear maps $L[f_{\sfx}\circ h,\Gamma_{\sft}]: \R[\Gamma_{\sft}] \to \R[\Gamma_{\tilde{\sft}}]$ for $f_\sfx \in \sfX$ and $\Gamma_{\sft} \in \sfT$ coincides with the set of linear maps induced by arbitrary elements of $B$ on arbitrary multicurves, once the domains and codomains are so identified.

The multicurves $\Gamma_{\sft}$ and $\Gamma_{\tilde{\sft}}$ are independent of
the choice of $g$ and of $h$, so the vector spaces $\R[\Gamma_{\sft}]$ and
$\R[\Gamma_{\tilde{\sft}}]$ are independent of these choices. The fact
that the right-hand square above commutes as maps on the sphere
implies that the linear maps $L[\widetilde{f}\circ h,\Gamma]: \R[\Gamma]
\to \R[\Gamma_{\tilde{\sft}}]$ are also independent of these choices. 

We have just shown for every $f\in B$ and $\Gamma\in \mc$ that there
exist $f_\sfx\in \sfX$, $\Gamma_{\sft}\in \sfT$ such that
$L[f,\Gamma]:\R[\Gamma]\to \R[f^{-1}(\Gamma)]$ is equivalent to
$L[f_{\sfx}\circ h,\Gamma_{\sft}]: \R[\Gamma_{\sft}] \to
\R[\Gamma_{\tilde{\sft}}]$ for some $f_\sfx \in \sfX$,
$\Gamma_{\sft},\Gamma_{\tilde{\sft}} \in \sfT$ and $h=h(f_\sfx,\Gamma_{\sft})$.
Similar reasoning shows that instead of fixing $f$ and $\Gamma$ and
finding $f_\sfx$ and $\zG_{\sft}$, we can fix $f$, $f_\sfx$, $\zG_{\sft}$ and
find $\Gamma$.  Thus for every $f\in B$ the set of linear maps
$L[f_{\sfx}\circ h,\Gamma_{\sft}]: \R[\Gamma_{\sft}] \to
\R[\Gamma_{\tilde{\sft}}]$ such that $f_\sfx \in \sfX$ and
$\Gamma_{\sft},\Gamma_{\tilde{\sft}} \in \sfT$ is equivalent to the set of maps
$L[f,\Gamma]:\R[\Gamma]\to \R[f^{-1}(\Gamma)]$ for $\Gamma\in \mc$.

We obtain a collection of linear maps on a single space as follows. We
set \[ \cV:=\bigoplus_{\Gamma_{\sft} \in \sfT}\R[\Gamma_{\sft}].\] We extend each
of the linear maps $L[f_{\sfx}\circ h,\Gamma_{\sft}]: \R[\Gamma_{\sft}] \to
\R[\Gamma_{\tilde{\sft}}]$ to a linear map $a: \cV \to \cV$ as follows.
We set $a \equiv 0$ on each summand other than that determined by
$\Gamma_{\sft}$.  On $\Gamma_{\sft}$, we set $a$ to be $L[f_{\sfx}\circ
h,\Gamma_{\sft}]: \R[\Gamma_{\sft}] \to \R[\Gamma_{\tilde{\sft}}]$, followed by the
natural inclusion $R[\Gamma_{\tilde{\sft}}]\hookrightarrow \cV$. We call
the summand $\R[\Gamma_{\sft}]$ the \emph{support} of $a$.  Thus each $f
\in B$ induces a finite set of elements \[ L[B]:=\{ \;[ L[f,\Gamma] ]:
\Gamma \in \mc\} \subset \End(\cV), \] where here the outside square
brackets denote the element of $\End(\cV)$ induced by $L[f,\Gamma]$.
The previous paragraph shows that this set is independent of the
choice of $f$.

In order to define matrices, we now choose an ordered basis for $\cV$
as follows.  We arbitrarily choose a linear ordering on the set of
${\op \cG}$-orbits of curves.  Let $\zG$ be a multicurve.  As noted
earlier, since $\cG$ is the pure mapping class group, if $g \in {\op \cG}$
stabilizes $\Gamma$ setwise, then $g.\gamma = \gamma$ for each $\gamma
\in \Gamma$.  It follows that we obtain a ${\op \cG}$-equivariant linear
ordering of the elements of $\Gamma$.  This equips $\R[\Gamma]$ with a
distinguished ordered basis.

The multicurve $\zG$ determines a unique $\zG_{\sft}\in \sfT$ and therefore a
unique summand $\R[\Gamma_{\sft}]$ of $\cV$.   
The distinguished ordered bases of the summands of
$\cV$ combine to form a distinguished ordered basis of $\cV$ after
lexicographically ordering the summands.  This ordered basis of $\cV$
allows us to speak of the matrix of an element of $\text{End}(\cV)$.

We remark that the definitions of the pullback function $\Gamma
\mapsto f^{-1}(\Gamma)$ on multicurves and of the linear map
$L[f,\Gamma]$ imply that given $f$ and $\Gamma$, the matrix for
$L[f,\Gamma]: \R[\Gamma] \to \R[f^{-1}(\Gamma)]$--if it exists--has no
zero rows.  A linear map with codomain a zero-dimensional vector space
does not have a matrix!  (Of course, the matrix of
$[L[f,\Gamma]]:\cV\to \cV$ has many zero rows in general.)

In case $f^{-1}(\Gamma') \neq \emptyset$, we can represent the weight
by the matrix of this linear map; otherwise, we represent it by the
symbol $0$, without ``matrix brackets''.

As a dynamical system, this construction is natural:

\begin{prop}
\label{prop:choices}
Suppose $\sfT, \sfT'$ are two choices of orbit transversals to the action of $\cG$ on $\mc$, and $\sfX, \sfX'$ are two bases for the Hurwitz biset $B$. 
Let $\cV, \cV'$ be the corresponding vector spaces, and suppose $f \in B$. We denote by $L[B]=\{[L[f]]\}, L'[B]=\{[L'[f]]\}$ the sets of elements of $\text{End}(\cV), \text{End}(\cV')$, respectively, determined by these choices. 
\begin{enumerate}
\item There is a canonical isomorphism $\Phi: \cV \to \cV'$ induced by $\Gamma_{\sft} \mapsto \Gamma_{\sft'}$ where ${\op \cG}.\Gamma_{\sft} = {\op \cG}.\Gamma_{\sft'}$. 
\item For each $a \in \{[L[f]]\}$, the corresponding $a' \in \{[L'[f]]\}$ is given by $a'=\Phi \circ a \circ \Phi^{-1}$.
\item $L'[B]=\Phi \circ L[B] \circ \Phi^{-1}$, as subsets.
\end{enumerate}

\end{prop}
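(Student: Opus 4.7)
The plan is to leverage the observation, established in the discussion immediately preceding the proposition, that the collection of linear maps $\{L[f,\Gamma] : f \in B, \; \Gamma \in \mc\}$ is intrinsic to $B$, and that $L[B] \subset \End(\cV)$ is simply a finite packaging of this collection relative to the choices of $\sfX$ and $\sfT$, via the canonical identifications of $\R[\Gamma_1]$ with $\R[\Gamma_2]$ whenever ${\op \cG}.\Gamma_1 = {\op \cG}.\Gamma_2$. Given this, all three assertions express the same underlying content: the two packagings differ only by the canonical isomorphism matching orbit representatives.

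For (1), I would define $\Phi$ summand by summand. For a fixed orbit $O \in {\op \cG}\backslash \mc$, let $\Gamma_{\sft}$ and $\Gamma_{\sft'}$ be its unique representatives in $\sfT$ and $\sfT'$, and choose any $g \in {\op \cG}$ with $g.\Gamma_{\sft'} = \Gamma_{\sft}$. The map $\gamma \mapsto g^{-1}.\gamma$ sends $\Gamma_{\sft}$ bijectively onto $\Gamma_{\sft'}$. The observation (noted earlier in the excerpt) that $\cG$ being the pure mapping class group forces the setwise stabilizer of any multicurve to fix each constituent curve implies that this bijection is independent of the choice of $g$, since different choices differ by elements of the stabilizer, which act trivially on individual curves. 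Linearly extending and taking the direct sum over orbits yields $\Phi : \cV \to \cV'$, visibly an isomorphism since it permutes the distinguished ordered basis vectors up to the choice of orbit ordering, which we take compatibly on both sides.

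For (2), fix $a = [L[f_\sfx \circ h, \Gamma_{\sft}]] \in L[B]$, so $a$ vanishes outside $\R[\Gamma_{\sft}]$ and, restricted there, equals the composition of $L[f_\sfx \circ h, \Gamma_{\sft}] : \R[\Gamma_{\sft}] \to \R[\Gamma_{\tilde{\sft}}]$ with inclusion into $\cV$. Let $\Gamma_{\sft'}, \Gamma_{\tilde{\sft}'}$ be the representatives in $\sfT'$ of the orbits of $\Gamma_{\sft}, \Gamma_{\tilde{\sft}}$. Applying the characterization recalled above to the basis $\sfX'$ and transversal $\sfT'$, there exist $f_{\sfx'} \in \sfX'$ and $h' \in {\op \cG}$ so that $L[f_{\sfx'} \circ h', \Gamma_{\sft'}]$ represents, via the canonical identifications of source and target, the same Thurston linear map as $L[f_\sfx \circ h, \Gamma_{\sft}]$. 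Setting $a' = [L[f_{\sfx'} \circ h', \Gamma_{\sft'}]] \in L'[B]$, the ${\op \cG}$-equivariance of the pullback function on multicurves, together with the construction of $\Phi$ as a direct sum of canonical identifications, yields $a' \circ \Phi = \Phi \circ a$ on $\R[\Gamma_{\sft'}]$, while both sides vanish on the other summands. Claim (3) then follows: $a \mapsto \Phi \circ a \circ \Phi^{-1}$ maps $L[B]$ into $L'[B]$, and the symmetric procedure gives the reverse inclusion.

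The main obstacle is the commutation $a' \circ \Phi = \Phi \circ a$ on $\R[\Gamma_{\sft'}]$ appearing in step (2). This reduces to the assertion that the canonical identifications of the source and target summands intertwine two Thurston linear maps representing the ``same'' pullback $L[f,\Gamma]$. Unwinding the commutative diagram used in the excerpt's single-basis discussion — where a generic $(f,\Gamma)$ is transported to $\sfX$-form via the biset action — and running the same argument simultaneously against both bases $\sfX$ and $\sfX'$, verifies this compatibility and completes the proof.
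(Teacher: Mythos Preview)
Your proposal is correct and follows essentially the same approach as the paper: both arguments reduce (2) to the observation that the two packagings $L[B]$ and $L'[B]$ are obtained from the same intrinsic collection $\{L[f,\Gamma]\}$ via canonical identifications, so transporting a generic $(f,\Gamma)$ simultaneously to both $(\sfX,\sfT)$-form and $(\sfX',\sfT')$-form yields the intertwining by $\Phi$. The paper presents this via a single two-sided commutative diagram and the phrase ``independent of the choices,'' while you spell out the same content more explicitly; (1) and (3) are handled identically in both.
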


\begin{proof} The first statement is obvious. Denoting with primes the objects arising in the corresponding construction using $\sfT'$ and $\sfX'$, the second follows from considering the diagram 
\[
 \xymatrix{
(S^2, P, \Gamma_{\tilde{t}'}) \ar[d]_{f_{\sfx'}\circ h'}&\ar[l]_(.53){(h')^{-1} \circ k'} (S^2, P, f^{-1}(\Gamma)) \ar[d]_{f}\ar[r]^(.56){h^{-1} \circ k}&  (S^2, P, \Gamma_{\tilde{\sft}})\ar[d]^{f_{\sfx}\circ h} \\
(S^2, P, \Gamma_{t'}) & \ar[l]_(.45){g'} (S^2, P, \Gamma) \ar[r]^{g} & (S^2, P, \Gamma_{\sft})
}
 \]
and recalling that the induced linear maps are independent of the
choices. The third conclusion follows from the second, by taking
unions. 
 
\end{proof}

\begin{defn} \label{defn:scrambler} The \emph{strata scrambler} of
$B$, denoted $\cS[B]$, is the directed weighted graph whose vertices
are the vector spaces $\R[\Gamma_{\sft}], \Gamma_{\sft} \in \sfT$, and with an edge
from $\R[\Gamma_{\sft}]$ to $\R[\Gamma_{\tilde{\sft}}]$ weighted by a linear
map $a: \R[\Gamma_{\sft}] \to \R[\Gamma_{\tilde{\sft}}]$ if and only if for
some $f \in B$ and $\Gamma \in \mc(S^2,P)$ we have $\Gamma_{\sft}\in
{\op \cG}.\Gamma$, $f^{-1}(\Gamma)\in {\op \cG}.\Gamma_{\tilde{\sft}}$, and
$L[f,\Gamma]$ induces $a$ as above.  We allow loops and multiple edges
between two vertices, but the weights of edges between two given
vertices are distinct.  
\end{defn}

We proceed with some important remarks:
\begin{enumerate}

\item Note that the trivial endomorphism $\cV \to \{0\}$ belongs to $L[B]$, since $f^{-1}(\emptyset)=\emptyset$ for any $f \in B$. 

\item In the definition, we emphasize that if $\Gamma_1, \Gamma_2$ are
nontrivial multicurves with $\Gamma_1 \subset \Gamma_2$ but $\Gamma_1
\neq \Gamma_2$, then ${\op \cG}.\Gamma_1 \neq {\op \cG}.\Gamma_2$ and so the
corresponding subspaces of $\cV$ intersect only at the origin;
e.g. $\R[\Gamma_1] \not\subset \R[\Gamma_2]$.
\end{enumerate}

\subsection{Basic properties of $L[B]$}
\label{subsecn:basicL}

The following proposition follows immediately from the definitions.

\begin{prop} \label{prop:whenzero} Suppose that $a_1,\dotsc,a_n \in
L[B]$ for some positive integer $n$ and that $a_i$ is induced by
$L[f_i,\Gamma_i]: \R[\Gamma_i] \to \R[\Gamma_i']$, where $\Gamma_i' =
f_i^{-1}(\Gamma_i)$.
\begin{enumerate} 
  \item If ${\op \cG}.\Gamma_i'\neq {\op \cG}.\Gamma_{i+1}$ for
some $i\in \{1,\dotsc,n-1\}$, then $a_n\circ \cdots \circ a_2\circ a_1
=0$.
  \item Suppose that ${\op \cG}.\Gamma_i'={\op \cG}.\Gamma_{i+1}$ for $i\in
\{1,\dotsc,n-1\}$.  Then there exists $g_i \in {\op \cG}$ such that
$g_i(\Gamma_{i+1})=\Gamma_i'$ for $i\in \{1,\dotsc,n-1\}$, and for the
map $F:=f_n\cdot g_{n-1}\cdot \cdots \cdot f_2\cdot g_1\cdot f_1 \in
B^n$, we have $L[F,\Gamma_1]: \R[\Gamma_1] \to \R[\Gamma_n']$ induces
the composition $a_n\circ \cdots \circ a_2\circ a_1$ in $\text{End}(\cV)$.
  \item If ${\op \cG}.\Gamma_1 = {\op \cG}.\Gamma_1'$, then up to
permutation of coordinates, the matrix for $a_1$ is given by 
\[ \left[
\begin{array}{cc}
A & 0 \\
0 & 0 
\end{array}
\right]
\]
where $A$ is the matrix for $L[f_1,\Gamma_1]: \R[\Gamma_1] \to \R[\Gamma'_1]$. 
\end{enumerate}
\end{prop}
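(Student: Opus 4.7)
All three statements rest on two structural facts already laid down in \S \ref{subsecn:ifs}: first, that each $a_i\in\End(\cV)$ extended from $L[f_i,\Gamma_i]$ is, by construction, zero off the summand $\R[\Gamma_{\sft_i}]$ and carries that summand into $\R[\Gamma_{\tilde\sft_i}]$, where $\sft_i,\tilde\sft_i\in\sfT$ are the indices with ${\op\cG}.\Gamma_i={\op\cG}.\Gamma_{\sft_i}$ and ${\op\cG}.\Gamma_i'={\op\cG}.\Gamma_{\tilde\sft_i}$; and second, that the identifications $\R[\Gamma]\cong\R[\Gamma_{\sft}]$ whenever ${\op\cG}.\Gamma={\op\cG}.\Gamma_{\sft}$ are canonical and ${\op\cG}$-equivariant. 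Part (1) is then immediate: the image of $a_i$ lies in the summand indexed by $\tilde\sft_i$, whereas $a_{i+1}$ is zero off the summand indexed by $\sft_{i+1}$, and the hypothesis ${\op\cG}.\Gamma_i'\neq{\op\cG}.\Gamma_{i+1}$ says $\tilde\sft_i\neq\sft_{i+1}$, forcing $a_{i+1}\circ a_i=0$, hence $a_n\circ\cdots\circ a_1=0$.

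For part (2), the assumption ${\op\cG}.\Gamma_i'={\op\cG}.\Gamma_{i+1}$ supplies, for each $i\in\{1,\dots,n-1\}$, some $g_i\in{\op\cG}$ with $g_i(\Gamma_{i+1})=\Gamma_i'$ (that is, $g_i^{-1}(\Gamma_i')=\Gamma_{i+1}$ in the right-action language). I then form
\[
F:=f_n\cdot g_{n-1}\cdot f_{n-1}\cdot g_{n-2}\cdots g_1\cdot f_1\in B^{\circ n}.
\]
Contravariant functoriality of pullback, together with $g_i^{-1}(\Gamma_i')=\Gamma_{i+1}$, yields a chain
\[
\Gamma_1\;\longleftarrow\;\Gamma_1'=g_1(\Gamma_2)\;\longleftarrow\;g_1(\Gamma_2')=g_1g_2(\Gamma_3)\;\longleftarrow\;\cdots\;\longleftarrow\;g_1\cdots g_{n-1}(\Gamma_n')
\]
under successive pullbacks by $f_1,\,g_1f_2g_1^{-1},\ldots$, and telescoping shows $F^{-1}(\Gamma_1)$ represents the same multicurve orbit as $\Gamma_n'$, with the induced linear map $L[F,\Gamma_1]:\R[\Gamma_1]\to\R[\Gamma_n']$ equal, under the canonical identifications, to the composition $a_n\circ\cdots\circ a_1$. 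The key point is just that the degree factor $1/\deg(F:\widetilde\delta\to\delta)$ decomposes multiplicatively along the tower, matching the composition of the $L[f_i,\Gamma_i]$'s; this, together with $({\op\cG})$-equivariance of the identifications, ensures the composition lands in $\End(\cV)$ as claimed.

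Part (3) is then a matter of bookkeeping. When ${\op\cG}.\Gamma_1={\op\cG}.\Gamma_1'$, we have $\sft_1=\tilde\sft_1$, so $a_1$ preserves the summand $\R[\Gamma_{\sft_1}]\subset\cV$ and vanishes identically on every other summand. After reordering the ordered basis of $\cV$ to list the basis of $\R[\Gamma_{\sft_1}]$ first (a permutation of coordinates), $a_1$ takes the asserted block form $\left[\begin{smallmatrix}A & 0\\ 0 & 0\end{smallmatrix}\right]$, where $A$ is the matrix of $L[f_1,\Gamma_1]:\R[\Gamma_1]\to\R[\Gamma_1']$ transported through the canonical identification of both $\R[\Gamma_1]$ and $\R[\Gamma_1']$ with $\R[\Gamma_{\sft_1}]$.

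The only genuine care needed is in part (2): one must verify that the telescoping of pullbacks interacts correctly with the canonical identifications, and that the composed element $F$ really does sit in $B^{\circ n}$ (equivalently, in $B^{\otimes n}$, since the intermediate $g_i$'s are absorbed into the $\otimes$-relation). This is essentially the only nontrivial piece and follows from the definition of the bisets $B^{\otimes n}$ and $B^{\circ n}$ together with functoriality of pullback; the rest is recording the supports of the $a_i$ in $\cV$.
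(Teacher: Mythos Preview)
Your argument is correct and matches the paper's approach: the paper does not actually give a proof of this proposition, merely stating that it ``follows immediately from the definitions,'' and what you have written is precisely that unpacking---tracking the support and image summands of each $a_i$ in $\cV$, then invoking functoriality and multiplicativity of degrees for part (2). One small remark: your parenthetical that $F$ lies ``equivalently in $B^{\otimes n}$'' is slightly misleading, since the paper takes pains (Examples~\ref{ex:euclidean} and~\ref{ex:degree3}) to show $B^{\otimes n}\to B^{\circ n}$ need not be injective; but this does not affect the argument, since all that is needed is $F\in B^{\circ n}$ and the factorization of $L[F,\Gamma_1]$ through the chain of pullbacks.
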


Statement 2 of Proposition~\ref{prop:whenzero} implies that every
directed edge path in $\cS[B]$ induces an element of
$\text{End}(\cV)$.  In this way the strata scrambler determines an
iterated function system on $\cV$.  In fact, the assignment $B \mapsto
L[B]$ is functorial: for each $n\in \N$ we have \[ L[B^{\otimes n}] =
(L[B])^n,\] where \[ L[B^{\otimes n}] = \bigcup_{f_n, \ldots, f_1 \in
B} \{L[f_n\cdot \cdots \cdot f_1]\} \subset \text{End}(\cV)\] and \[
(L[B])^n = \{a_n\circ \cdots \circ a_{2} \circ a_1 : a_i \in L[B]\}
\subset \text{End}(\cV).\] This leads to
  \begin{equation*}
L[B^{\otimes *}]:=\bigcup_{n=1}^{\infty }L[B^{\otimes
n}]=\bigcup_{n=1}^{\infty }(L[B])^n=:L[B^*].
  \end{equation*}
This set is a semigroup for the operation of composition in
$\text{End}(\cV)$.

\subsection{Relating the iterated function systems} 
\label{subsecn:relating}

We wish to relate the IFS on ${\op \cG}$ consisting of operators $g\mapsto
\sfx@g$ with the IFS on $\cV$ given by the strata scrambler $\cS[B]$.

We begin with a multicurve $\zG$.  Every $\zg\in \zG$ determines a
positive (right) Dehn twist $\twist(\gamma)$ about $\gamma$.  Let
$\text{Tw}(\zG)$ denote the multitwist subgroup of $\cG$ generated by
the Dehn twists $\text{tw}(\zg)$ for $\zg\in \zG$.  As noted earlier,
since $\cG$ is the pure mapping class group, if $g \in {\op \cG}$
stabilizes $\Gamma$ setwise, then $g.\gamma = \gamma$ for each $\gamma
\in \Gamma$.  It follows that we obtain a canonical bijection from
$\zG$ to $\zG_{\sft}$ for some $\Gamma_{\sft}\in \sfT$ and therefore a
canonical isomorphism $\zi_\zG\co \text{Tw}(\zG)\to
\Z[\zG_{\sft}]\subset \cV_\Z$ such that $\zi_\zG(\text{tw}(\zg))=1\cdot
\zg_{\sft}$, where $\zg_{\sft} \in \zG_{\sft}$ is the element of
$\zG_{\sft}$ corresponding to $\zg\in \zG $.

Now let $f\in B$.  We choose the biset basis $\sfX\subseteq B$ to
include $f$.  The operator $f@\co {\op \cG}\to {\op \cG}$ is defined
so that if $g\in {\op \cG}$, then $f@g$ is the element $k\in {\op
\cG}$ such that $f\cdot g=k\cdot f'$ for some $f'\in \sfX$.  Recall
that ${\op \cG}_f=\text{Stab}_{{\op \cG}}{\op \cG}\cdot f$.  In other
words, $g\in {\op \cG}_f$ if and only if $f'=f$.  It follows that the
restriction of $f@ $ to ${\op \cG}_f$ is a group homomorphism.
Combining i) the equation $f\cdot g=k\cdot f$ and ii) the definition
of $L[f,\zG]$ and iii) the definition of Dehn twist and iv) the
canonical identifications of $\R[\zG]$ and $\R[f^{-1}(\zG)]$ with
subspaces of $\cV$, we obtain the following proposition.

\begin{prop}\label{prop:AtL} Let $f\in B$, and let $\zG$ be a
multicurve.  Let $\zG_{\sft}$ and $\zG_{\widetilde{t}}$ represent the
${\op \cG}$-orbits ${\op \cG}.\zG$ and ${\op \cG}.f^{-1}(\zG)$, respectively.  Then
the following diagram commutes.
  \begin{equation*}
 \xymatrix{
\text{Tw}(\zG)\cap {\op \cG}_f\ar[d]^{\zi_\zG} \ar[r]^(.52){f@} &
\text{Tw}(f^{-1}(\zG))\ar[d]^{\zi_{f^{-1}(\zG)}}\\
\R[\zG_{\sft}]\ar[r]^{[L[f,\zG]]} &
\R[\zG_{\widetilde{t}}]
}
  \end{equation*}
\end{prop}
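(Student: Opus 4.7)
The plan is to verify commutativity by reducing to a computation on generators and then applying the standard formula for lifting Dehn twists through a branched cover. I would proceed in three steps.

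First, I would observe that both horizontal maps restrict to homomorphisms of abelian groups: the restriction of $f@$ to $\cG_f$ is already known to be a group homomorphism (this is noted in the text just above the statement), and $[L[f,\zG]]$ is $\R$-linear. The vertical maps $\zi_\zG$ and $\zi_{f^{-1}(\zG)}$ are group isomorphisms onto lattices in $\R[\zG_{\sft}]$ and $\R[\zG_{\widetilde{t}}]$, respectively, by their very construction. Since $\text{Tw}(\zG)$ is free abelian generated by the $\text{tw}(\zg)$ for $\zg\in \zG$, the subgroup $\text{Tw}(\zG)\cap {\op\cG}_f$ is also free abelian (of finite rank), and it suffices to check commutativity on a general element $g=\prod_{\zg\in \zG}\text{tw}(\zg)^{n_\zg}\in \text{Tw}(\zG)\cap {\op\cG}_f$.

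Second, I would apply the standard Dehn-twist lifting formula to such a $g$. Represent $g$ by a homeomorphism supported in disjoint annular neighborhoods $A_\zg$ of chosen representatives $\zd_\zg\in \zg$, acting as an $n_\zg$-fold twist on each. The unique $f$-lift is supported in $f^{-1}\bigl(\bigsqcup_\zg A_\zg\bigr)$; on each component $\widetilde{A}$ with core $\widetilde{\zd}$ mapping to $\zd_\zg$ with degree $\deg(f:\widetilde{\zd}\to\zd_\zg)$, the lift is isotopic to a Dehn twist about $\widetilde{\zd}$ of power $n_\zg/\deg(f:\widetilde{\zd}\to\zd_\zg)$. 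The hypothesis $g\in {\op\cG}_f$ forces these fractional powers to be integers. Regrouping the lift by isotopy classes $\widetilde{\zg}\in f^{-1}(\zG)$ of the preimage cores yields
\[
f@g \;=\; \prod_{\widetilde{\zg}\in f^{-1}(\zG)}
\text{tw}(\widetilde{\zg})^{\,\sum_{\widetilde{\zd}\in \widetilde{\zg}} n_{\zg(\widetilde{\zd})}/\deg(f:\widetilde{\zd}\to\zd_{\zg(\widetilde{\zd})})},
\]
where $\zg(\widetilde{\zd})$ denotes the curve in $\zG$ whose representative $\widetilde{\zd}$ covers.

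Third, I would compare the two paths around the square. Going down-then-right, the formula for $L[f,\zG]$ from the text gives, by $\R$-linearity applied to $\zi_\zG(g)=\sum_\zg n_\zg\, \zg_{\sft}$, an expression of the form $\sum_{\widetilde{\zg}}\bigl(\sum_{\widetilde{\zd}\in \widetilde{\zg}} n_{\zg(\widetilde{\zd})}/\deg(f:\widetilde{\zd}\to\zd_{\zg(\widetilde{\zd})})\bigr)\widetilde{\zg}_{\widetilde{t}}$. Going right-then-down, applying $\zi_{f^{-1}(\zG)}$ to the displayed multitwist for $f@g$ reads off exactly the same integer exponents as coefficients of the basis vectors $\widetilde{\zg}_{\widetilde{t}}$. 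The two expressions match termwise, proving commutativity.

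The main point of care is the local lifting computation in the second step: checking that a degree-$d$ annular cover of an $n$-fold Dehn twist is isotopic to an $(n/d)$-fold Dehn twist on the cover. This is precisely what produces the $1/\deg$ weighting appearing in the definition of $L[f,\zG]$, and it is this alignment that makes the diagram commute.
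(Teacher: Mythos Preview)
Your proof is correct and is precisely a detailed unpacking of the paper's one-sentence justification preceding the proposition: combining the biset equation $f\cdot g = k\cdot f$, the definition of $L[f,\zG]$, the definition of Dehn twist (in particular the standard fact that an $n$-fold twist lifts through a degree-$d$ annular cover to an $n/d$-fold twist), and the canonical identifications of $\R[\zG]$ and $\R[f^{-1}(\zG)]$ with summands of $\cV$. The paper gives no further argument, so your three-step verification is exactly what is being asserted.
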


We continue in this vein.  The following paragraphs lead to
Proposition~\ref{prop:gtilde}.

Let $\sfX$ be a basis of $B$.  For each pair $(f_\sfx, \Gamma_{\sft})
\in \sfX \times \sfT$, the intersection $\Tw(\Gamma_{\sft})\cap {\op
\cG}_{f_\sfx}$ is a subgroup of $\Tw(\Gamma_{\sft})$ of finite index.
Every finite index subgroup of every finitely generated free Abelian
group $\bbZ^m$ contains elements of the form $(k,k,k,\dotsc,k)$ for
arbitrarily large integers $k$.  So every coset of such a subgroup
contains elements whose coordinates are all positive.  Thus for every
$\zG_{\sft}\in \sfT$ we can choose a finite set of such coset
representatives whose elements are products of powers of positive Dehn
twists about every one of the elements of $\Gamma_{\sft}$.  It might
be helpful for intuition to note that the $\ell^1$-norms of these
elements can be bounded in terms of the group index and
$\#\Gamma_{\sft}$--but we do not need this fact; only the finiteness
will be used. Doing this for each such pair $(f_\sfx, \Gamma_{\sft})
\in \sfX \times \sfT$, we obtain a finite collection $E \subset {\op
\cG}$ of such coset representatives.

Let $(f_\sfx,\zG_{\sft})\in \sfX\times \sfT$.  We obtain a map
$L[f_\sfx,\zG_{\sft}]\co \R[\zG_{\sft}]\to
\R[f_\sfx^{-1}(\zG_{\sft})]$.  Recall that we have fixed $h \in {\op
\cG}$, with $f_\sfx^{-1}(\Gamma_{\sft})=h(\Gamma_{\tilde{\sft}})$ and
$\zG_{\widetilde{t}}\in \sfT$.  We set $H$ to be the finite collection
of such elements $h$.

We next define a map $g\mapsto \widetilde{g}$ from
$\text{Tw}(\zG_{\sft})$ to $\text{Tw}(\zG_{\widetilde{t}})$ as
follows.  Let $g\in \text{Tw}(\zG_{\sft})$.  Then there exists $k\in
\text{Tw}(\zG_{\sft})\cap \cG_{f_x}\op$ and $e\in E$ such that $g =
k\cdot e$.  So $f_\sfx\cdot k=\widetilde{g}'\cdot f_\sfx$ for some
$\widetilde{g}'\in \text{Tw}(\zG_{\widetilde{\sft}})$.  Let
$f_\sfy:=f_\sfx\cdot e\in \sfX\cdot E$.  Consider the diagram below:
  \begin{equation}\label{eqn:gtilde}
\begin{gathered}
\xymatrix{
(S^2, P, \Gamma_{\widetilde{t}}) \ar[r]^{\widetilde{g}}  
\ar[r]\ar@/^1.4pc/[rr]^{\widetilde{g}} \ar[d]_{h} & 
(S^2, P, \Gamma_{\widetilde{t}}) \ar[r]^{\text{id}} \ar[d]_{h} & 
(S^2, P, \Gamma_{\widetilde{t}}) \ar[d]_{h} \\
(S^2, P, f_{\sfx}^{-1}(\zG_{\sft})) \ar[r]^{\widetilde{g}'} \ar[d]_{f_{\sfx}} & 
(S^2, P, f_{\sfx}^{-1}(\zG_{\sft})) \ar[r]^{\text{id}} \ar[d]_{f_{\sfx}} & 
(S^2, P, f_{\sfx}^{-1}(\zG_{\sft})) \ar[d]_{f_{\sfy}} \\
(S^2, P, \Gamma_{\sft}) \ar[r]^{k} \ar[r]\ar@/_1.2pc/[rr]_{g} & 
(S^2, P, \Gamma_{\sft}) \ar[r]^{e} & 
(S^2, P, \Gamma_{\sft}).
}
\end{gathered}
  \end{equation}

This diagram determines a map $g\mapsto \widetilde{g}$ from
$\text{Tw}(\zG_{\sft})$ to $\text{Tw}(\zG_{\widetilde{t}})$.  We see that
  \begin{equation*}
\widetilde{g}' = f_\sfx @ k = f_\sfx @ (g\cdot e^{-1}) =
(f_\sfx @ g) \cdot (f_\sfz@ e^{-1})
  \end{equation*}
for some $f_z\in \sfX$, and so 
\[
\begin{array}{lcr}
\widetilde{g}& = & h\cdot (f_\sfx@g) \cdot
((f_\sfz@e^{-1})\cdot  (h^{-1})).
\end{array}
\] 
The factors $h$ and $(f_\sfz@e^{-1})\cdot h^{-1}$ are each drawn
from finite sets $H$ and $\sfX(E^{-1})\cdot~\!\!H^{-1}$. Putting $C:=H
\cup (\sfX(E^{-1})\cdot H^{-1})$, we conclude that
  \begin{equation*}
\widetilde{g} = c_1\cdot (f_\sfx @ g) \cdot c_2 \text{ with } 
\;\; c_1, c_2 \in C.
  \end{equation*}
On the other hand, using the shorthand notation $\vec{r}$ for terms of
the form $\zi_{\zG_{\sft}}(r)$, Proposition~\ref{prop:AtL} implies that
\[\vec{\widetilde{g}}=[L[f_\sfx, \Gamma_{\sft}]]\cdot \vec{k} = [L[f_\sfx,
\Gamma_{\sft}]]\cdot (\vec{g}-\vec{e})= [L[f_\sfx, \Gamma_{\sft}]]\cdot
\vec{g} + \vec{d},\]
where $d\in D:=-L[f_x,\zG_{\sft}]\cdot E$, a finite set of vectors in
$\R[f_\sfx^{-1}(\zG_{\sft})]$ with negative coordinates.

We have proved the following proposition.

\begin{prop}\label{prop:gtilde} Let $(f_x,\zG_{\sft})\in \sfX\times
\sfT$.  Let $E$ and $H$ be the finite subsets of $\op \cG$ defined
immediately after Proposition~\ref{prop:AtL}.  Let $C:=H \cup
(\sfX(E^{-1})\cdot H^{-1})$, and let
$D:=-L[f_x,\zG_{\sft}]\cdot E$.  Diagram
\ref{eqn:gtilde} determines a map $g\mapsto \widetilde{g}$ from
$\text{Tw}(\zG_{\sft})$ to $\text{Tw}(\zG_{\widetilde{t}})$ such that
\begin{enumerate}
  \item $\widetilde{g}= c_1\cdot (f_\sfx @ g) \cdot c_2$ with 
$c_1, c_2 \in C$ and 
  \item $\vec{\widetilde{g}}= L[f_\sfx, \Gamma_{\sft}]\cdot
\vec{g} + \vec{d}$ with $\vec{d}\in \vec{D}$.
\end{enumerate}
\end{prop}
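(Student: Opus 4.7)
The plan is to build the map $g \mapsto \widetilde{g}$ by coset-reducing $g$ into the $f_\sfx$-stabilizer, applying the biset operator $f_\sfx @$ there (where it acts as a homomorphism), conjugating by $h$ to land in $\Tw(\zG_{\widetilde{t}})$, and translating the resulting identities via Proposition~\ref{prop:AtL}.

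First, I would observe that $\Tw(\zG_{\sft}) \cap {\op \cG}_{f_\sfx}$ is a finite-index subgroup of the finitely generated free abelian group $\Tw(\zG_{\sft})$, and choose the finite coset transversal $E$ inside $\Tw(\zG_{\sft})$ itself. This is possible because in any finitely generated free abelian group, every finite-index subgroup meets the positive octant, so every coset admits a representative that is a positive product of Dehn twists. Then any $g \in \Tw(\zG_{\sft})$ factors uniquely as $g = k \cdot e$ with $k \in \Tw(\zG_{\sft}) \cap {\op \cG}_{f_\sfx}$ and $e \in E$. Because $k$ stabilizes the ${\op \cG}$-orbit of $f_\sfx$, the element $\widetilde{g}' := f_\sfx @ k$ is defined and lies in $\Tw(f_\sfx^{-1}(\zG_{\sft}))$. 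Setting $\widetilde{g} := h \cdot \widetilde{g}' \cdot h^{-1}$ in ${\op \cG}$ realizes the top-left square of Diagram~(\ref{eqn:gtilde}) and places $\widetilde{g}$ in $\Tw(\zG_{\widetilde{t}})$.

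For conclusion (1), I would invoke the biset chain rule: writing $f_\sfx \cdot g = (f_\sfx @ g) \cdot f_\sfz$ for a unique $f_\sfz \in \sfX$, and substituting $k = g \cdot e^{-1}$, we get $\widetilde{g}' = f_\sfx @ k = (f_\sfx @ g) \cdot (f_\sfz @ e^{-1})$, and hence $\widetilde{g} = h \cdot (f_\sfx @ g) \cdot \bigl((f_\sfz @ e^{-1}) \cdot h^{-1}\bigr)$. This identifies $c_1 := h \in H \subset C$ and $c_2 := (f_\sfz @ e^{-1}) \cdot h^{-1} \in \sfX(E^{-1}) \cdot H^{-1} \subset C$, as required.

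For conclusion (2), I would use that $\zi_{\zG_{\sft}}$ is additive on the abelian group $\Tw(\zG_{\sft})$ to write $\vec{k} = \vec{g} - \vec{e}$, and then apply Proposition~\ref{prop:AtL} to compute $\vec{\widetilde{g}'} = [L[f_\sfx,\zG_{\sft}]]\cdot \vec{k} = [L[f_\sfx,\zG_{\sft}]]\cdot \vec{g} + \vec{d}$ with $\vec{d} \in \vec{D}$. The canonical bijection between $\zG_{\widetilde{t}}$ and $f_\sfx^{-1}(\zG_{\sft})$ induced by $h$, together with the fact that pure mapping class elements act trivially on curves in any multicurve they stabilize, makes $\zi$-coordinates invariant under conjugation by $h$, so $\vec{\widetilde{g}} = \vec{\widetilde{g}'}$, giving (2). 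The main obstacle I anticipate is bookkeeping: keeping the opposite-group convention straight, being careful that $E \subset \Tw(\zG_{\sft})$ so that $g = k\cdot e$ decomposes abelianly at the $\zi$-level, and pinning down which basis element $f_\sfz$ arises from the chain rule. The structural content--finite-index coset reduction plus Proposition~\ref{prop:AtL}--is clean.
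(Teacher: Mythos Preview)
Your proposal is correct and follows essentially the same route as the paper: the paper's proof (which appears in the paragraphs immediately preceding the proposition statement) proceeds by exactly this coset decomposition $g = k \cdot e$, applies $f_\sfx@$ to $k$, conjugates by $h$ via the diagram, and uses the chain rule $f_\sfx@(g\cdot e^{-1}) = (f_\sfx@g)\cdot(f_\sfz@e^{-1})$ together with Proposition~\ref{prop:AtL} for the vector identity. Your extra remark that $\vec{\widetilde{g}} = \vec{\widetilde{g}'}$ via the canonical identification induced by $h$ makes explicit a step the paper leaves implicit in its notation.
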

\smallskip

\subsection{Spectra} \label{subsecn:spectra} Let $a \in L[B]$.  Then
$a$ is an element of $\text{End}(\cV)$ and so we may consider its
spectrum.  For a vector space with a basis, an element $v$ is called
\emph{non-negative} if its coefficients are non-negative; we write
this as $v \geq 0$.  The linear map $a$ is a non-negative linear map,
in the sense that $v \geq 0 \implies a(v) \geq 0$. Equivalently, the
matrix defining $a$ has non-negative entries.  The Perron-Frobenius
theorem implies that each $w \in L[B^*]$ has a nonnegative real
eigenvalue equal to its spectral radius, which we denote by
$\sigma(w)$.

\begin{prop} \label{prop:spectra} Suppose $a_1, \ldots, a_n$ is a
sequence of nonzero elements in $L[B]$ and $w:=a_n\circ \cdots \circ
a_1\in L[B^n]$. Then $\sigma(w)\neq 0$ if and only if $w$ is induced
by a closed edge-path in $\cS[B]$.
\end{prop}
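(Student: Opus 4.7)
The plan is to analyze $w \in \text{End}(\cV)$ according to how the supports and images of the factors chain up, and to show that the closed-path case is exactly the case of nonzero spectral radius. First, I would invoke Proposition~\ref{prop:whenzero}(1): if ${\op \cG}.f_i^{-1}(\Gamma_i) \neq {\op \cG}.\Gamma_{i+1}$ for some $i$, then $w = 0$, so $\sigma(w) = 0$ and no edge-path at all is induced. So I may assume the $a_i$ fit together as a directed edge-path in $\cS[B]$ running from a vertex $V_0$ (the summand of $\cV$ corresponding to the ${\op \cG}$-orbit of $\Gamma_1$) to a vertex $V_n$ (the summand corresponding to the orbit of $f_n^{-1}(\Gamma_n)$). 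By construction, $w$ has support contained in $V_0$, image contained in $V_n$, and vanishes on every other summand of $\cV$.

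Next I would split into cases according to whether this edge-path is closed. If $V_0 \neq V_n$, then $V_0$ and $V_n$ are distinct direct summands of $\cV$, so $w|_{V_n} = 0$; consequently $w^2 = 0$, so $w$ is nilpotent and $\sigma(w) = 0$. If $V_0 = V_n$, then in the distinguished basis of $\cV$ the matrix of $w$ is block-diagonal with one block equal to the matrix $M$ of $w|_{V_0}\colon V_0 \to V_0$ and the other blocks zero, so $\sigma(w) = \sigma(M)$, and it suffices to prove $\sigma(M) > 0$.

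By Proposition~\ref{prop:whenzero}(2), $M$ is, up to the canonical basis identification, the matrix of a Thurston linear map $L[F, \Gamma_1]$ for some $F \in B^n$, so $M$ is nonnegative and has no zero rows (as noted in the remark following Proposition~\ref{prop:choices}). The remaining step is the standard fact that a nonnegative square matrix with no zero rows has positive spectral radius. To prove this, I would form the directed support graph of $M$ with edge $i \to j$ iff $M_{ij} > 0$: every vertex has out-degree at least one, so by finiteness there is a directed cycle of some length $k$ with weight-product $c > 0$. Then $(M^{kN})_{ii} \geq c^N$ for some vertex $i$, giving $\operatorname{tr}(M^{kN}) \geq c^N$; combined with the bound $\operatorname{tr}(M^{kN}) \leq \dim(V_0) \cdot \sigma(M)^{kN}$, this yields $\sigma(M) \geq c^{1/k}$ in the limit $N \to \infty$. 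I anticipate no serious obstacle; the only delicate point is tracking the canonical identification that makes $V_n$ literally equal to $V_0$ in the closed case, so that $M$ is genuinely an endomorphism of one summand of $\cV$ rather than merely a map between two isomorphic summands.
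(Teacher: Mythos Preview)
Your proposal is correct and follows essentially the same approach as the paper: both use Proposition~\ref{prop:whenzero} to reduce to the closed-path case, observe $w^2=0$ when the path is not closed, and then argue that the restricted nonnegative matrix $M$ (which has no zero rows) has positive spectral radius. The only difference is in this last step: the paper cites a result that a nilpotent nonnegative matrix is permutation-conjugate to a strictly upper-triangular one (hence has a zero row), while you give a direct trace-and-cycle argument; your version is more self-contained but otherwise equivalent.
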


\begin{proof} If $w$ is not induced by an edge-path, then $w \equiv 0$
in $\text{End}(\cV)$ by Proposition \ref{prop:whenzero}. If it
represents an edge-path which is not closed, then $w^2=0$ since its
support and image intersect only at the origin.  Conversely, if $w$ is
induced by a closed edge-path of length $n$, say starting and ending
at $\cG.\Gamma$, then the corresponding linear map $a$ is represented by
$L[F, \Gamma]$ for some $F \in B^n$ and multicurve $\zG$ for which
$F^{-1}(\Gamma)\in {\op \cG}.\Gamma$, by Proposition
\ref{prop:whenzero}.  The nonnegative linear map $a$ cannot be
nilpotent: if it were, then after conjugation by a permutation matrix,
it would be upper-triangular, hence have a zero row;
cf. \cite[Thm. 0]{MR1212727}.  This is impossible by the remark a bit
before Proposition~\ref{prop:choices}.

\end{proof}

We equip $\cV$ with the Euclidean norm, and use this to define the operator norm $||\cdot ||$ on elements of $\text{End}(\cV)$. 
\begin{defn}
\label{defn:jsr}
For $n \geq 1$, set 
\[ \whatsigma_n(L[B]):=\max\{||w|| : w \in L[B^n]\}.\]
The \emph{joint spectral radius} is 
\[ \whatsigma(L[B]):=\lim_{n\to\infty}\whatsigma_n(L[B])^{1/n}.\]\
\end{defn}
The limit is independent of the chosen norm, since we are dealing with finite-dimensional spaces.

In particular, $\whatsigma(L[B])<1$ if and only if there are constants
$C>0$ and $0<\lambda<1$ such that $||w||<C\lambda^n$ for all $n \geq
1$ and $w\in L[B^n]$. In turn, this occurs if and only if for any
norm, there is a positive integer $q$ such that for any $n \geq q$ and
any $w \in L[B^n]$, we have $||w||< 1/2$.

The limit always exists, and is independent of the chosen norm.  For more on the joint spectral radius, see \cite{Cicone:2015aa} and the references therein.
We remark that even for matrices with binary or equivalently nonnegative rational entries, the problem of deciding whether the joint spectral radius is at most 1 is algorithmically undecideable (\emph{op. cit.}, p. 21, p. 30). And the question of realizibility of the joint spectral radius by some finite product (with the corresponding power)--called having the finiteness property--is also difficult; see \cite{MR2003315}. If such realizability is known to hold, the question of when the joint spectral radius is less than 1 becomes algorithmically decideable (\cite{Cicone:2015aa} p. 27). 

The following is essentially \cite[Thm. I(b), p. 22]{MR1152485}.
\begin{lem}
\label{lem:tozero}
The joint spectral radius satisfies $\whatsigma(L[B])<1$ if and only
if for any infinite sequence $a_1, a_2, a_3, \ldots$ we have
$||a_n\circ \cdots\circ a_2\circ a_1|| \to 0$ as $n \to \infty$.  
\end{lem}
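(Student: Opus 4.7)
The plan is to prove the two implications separately.

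For the forward direction, I would argue directly from the definition. The sequence $\whatsigma_n(L[B])$ is submultiplicative, so by Fekete's lemma $\whatsigma_n^{1/n}$ converges monotonically from above to $\whatsigma(L[B])$. Assuming $\whatsigma(L[B]) < 1$, pick $\lambda \in (\whatsigma(L[B]), 1)$; then for all sufficiently large $n$, $\whatsigma_n \leq \lambda^n$. Since $a_n \circ \cdots \circ a_1$ lies in $L[B^n] = (L[B])^n$, the functoriality established in \S\ref{subsecn:basicL} gives $||a_n \circ \cdots \circ a_1|| \leq \whatsigma_n \leq \lambda^n \to 0$.

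For the reverse implication, the cleanest route is to invoke the cited theorem \cite[Thm.~I(b), p.~22]{MR1152485}, which states precisely this equivalence for any finite set of linear operators on a finite-dimensional space; the finiteness of $L[B] \subset \End(\cV)$ was established in \S\ref{subsecn:ifs} via the finite indexing $\sfX \times \sfT$.

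If a self-contained argument is desired, I would proceed by contraposition. If $\whatsigma(L[B]) \geq 1$, then $\whatsigma_n \geq 1$ for all $n$, so for each $n$ there exists $w_n \in L[B^n]$ with $||w_n|| \geq 1$. Writing $w_n = b^{(n)}_n \circ \cdots \circ b^{(n)}_1$, K\"onig's lemma applied to the finite-branching tree $L[B]^{<\omega}$ yields an infinite sequence $(a_i) \in L[B]^{\N}$ each of whose prefixes matches some $w_n$. The main obstacle is that the naive submultiplicative bound $||a_m \circ \cdots \circ a_1|| \geq ||w_n||/M^{n-m}$, with $M := \max_{b \in L[B]} ||b||$, degrades as $n \to \infty$, so K\"onig's lemma alone does not directly produce a sequence with non-vanishing partial products. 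To overcome this, I would apply a Baire category argument to the compact product space $L[B]^{\N}$: if every infinite product tended to zero, then for each $\epsilon > 0$ the closed sets $U_N := \bigcap_{n \geq N} \{(a_i) : ||a_n \circ \cdots \circ a_1|| \leq \epsilon\}$ would cover $L[B]^{\N}$, so some $U_{N_0}$ would contain a basic cylinder $\{(a_i) : a_i = c_i \text{ for } i \leq K\}$. Setting $p := c_K \circ \cdots \circ c_1$, this translates to $||q \circ p|| \leq \epsilon$ for all sufficiently long $q \in L[B^m]$. Propagating this ``small after composing with a fixed prefix'' property throughout the semigroup $L[B^*]$ to bound $\whatsigma_n$ uniformly, contradicting $\whatsigma(L[B]) \geq 1$, is the main analytic step, and is essentially the content of Daubechies--Lagarias.
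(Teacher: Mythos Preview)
The paper gives no proof of this lemma; it simply records the citation to \cite[Thm.~I(b), p.~22]{MR1152485} and moves on. Your proposal is therefore aligned with the paper's treatment: for the reverse implication you invoke the same reference, and you additionally spell out the forward implication, which is routine. One small correction: Fekete's lemma gives $\whatsigma_n^{1/n} \to \inf_n \whatsigma_n^{1/n} = \whatsigma(L[B])$, but the convergence is \emph{not} monotone in general; this does not matter for your argument, since convergence alone yields $\whatsigma_n^{1/n} < \lambda$ for large $n$. Your Baire-category sketch for a self-contained reverse direction is a reasonable outline, and you are right to flag that the final propagation step is where the real work lies; the paper does not attempt this either.
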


\subsection{Rationality}

Thurston's characterization of rational functions \cite{DH1} and the preceding functoriality imply the following.

\begin{prop}
\label{prop:ratB}
Suppose $B$ is a non-exceptional Hurwitz biset.
\begin{enumerate}
\item Every element of $B$ is rational if and only if for each $a \in L[B]$ arising as the weight of a cycle of length one, the spectral radius satisfies $\sigma(a)<1$.
\item Given $n$, every element of the biset $B^n$ is rational if and only if  for each $w=a_n\circ\cdots\circ a_1$ arising as the weight of a cycle of length $n$, the spectral radius satisfies $\sigma(w)<1$.  
\item Every element of the semigroup $B^*$ is rational if and only if for every $n$ and each $w=a_n\circ\cdots\circ a_1$ arising as the weight of a cycle of length $n$, the spectral radius satisfies $\sigma(w)<1$.  
\item If $\whatsigma(L[B])<1$, then every element of $B^*$ is rational. 
\end{enumerate}
\end{prop}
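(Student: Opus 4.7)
The plan is to reduce the proposition to Thurston's characterization theorem \cite{DH1} via a dictionary between Thurston obstructions for elements of $B^n$ and length-$n$ cycles in the strata scrambler $\cS[B]$. The central correspondence is that a length-$n$ cycle at $\Gamma_{\sft}$ with weight $w$ records, up to canonical identification, the Thurston linear transformation of some $F' \in B^n$ on the completely invariant multicurve $\Gamma_{\sft}$.

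\textbf{Forward direction $(\Rightarrow)$ of (1), (2), (3).} Given a length-$n$ cycle at $\Gamma_{\sft}$ with weight $w = a_n \circ \cdots \circ a_1$, Proposition~\ref{prop:whenzero}(2) produces $F \in B^n$ with $F^{-1}(\Gamma_{\sft}) \in \op\cG.\Gamma_{\sft}$ and $[L[F,\Gamma_{\sft}]] = w$ in $\End(\cV)$. Choosing a mapping class $k$ with $k(\Gamma_{\sft}) = F^{-1}(\Gamma_{\sft})$ and absorbing $k$ into the innermost factor of $F$ yields $F' := F \circ k \in B^n$ with $(F')^{-1}(\Gamma_{\sft}) = \Gamma_{\sft}$; a direct computation using the canonical identification $\R[k(\Gamma_{\sft})] \cong \R[\Gamma_{\sft}]$ gives $[L[F',\Gamma_{\sft}]] = [L[F,\Gamma_{\sft}]] = w$. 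Thus $\sigma(w) = \sigma(L[F',\Gamma_{\sft}])$, and if $F'$ is rational, Thurston's theorem gives $\sigma(w) < 1$.

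\textbf{Reverse direction $(\Leftarrow)$ of (1), (2), (3).} Suppose $F \in B^n$ is obstructed. Thurston's theorem yields a multicurve $\Gamma_1$ with $F^{-1}(\Gamma_1) \subseteq \Gamma_1$ and $\sigma(L[F,\Gamma_1]) \geq 1$. A Perron-Frobenius argument applied to the non-negative matrix $L[F,\Gamma_1]$ isolates the support $\Gamma_0$ of a leading non-negative eigenvector: for $\gamma \in \Gamma_0$ and $\gamma' \in \Gamma_1 \setminus \Gamma_0$, the eigenvalue equation forces the corresponding matrix entry to vanish, so $F^{-1}(\Gamma_0) \subseteq \Gamma_0$ with spectral radius still $\geq 1$. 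Iterating, we may assume $L[F,\Gamma_0]$ is Perron-Frobenius irreducible; irreducibility forbids zero rows, forcing the stronger equality $F^{-1}(\Gamma_0) = \Gamma_0$. Pick $\Gamma_{\sft} \in \sfT$ with $\op\cG.\Gamma_{\sft} = \op\cG.\Gamma_0$ and a mapping class $k$ with $k(\Gamma_{\sft}) = \Gamma_0$; then $\tilde F := k^{-1} \circ F \circ k \in B^n$ satisfies $\tilde F^{-1}(\Gamma_{\sft}) = \Gamma_{\sft}$. Decomposing $\tilde F$ into $n$ factors in $B$ (absorbing $k$ and $k^{-1}$ into the outermost factors) and successively tracing pullbacks of $\Gamma_{\sft}$ modulo $\op\cG$-orbits produces a closed length-$n$ edge path in $\cS[B]$ whose weight equals $[L[\tilde F,\Gamma_{\sft}]]$ and hence has spectral radius $\sigma(L[F,\Gamma_0]) \geq 1$.

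\textbf{Statement (4) and main obstacle.} For (4), if $\widehat{\sigma}(L[B]) < 1$, choose $\lambda \in (\widehat{\sigma}(L[B]),1)$ and $C > 0$ with $\|w\| \leq C\lambda^n$ for all $w \in L[B^n]$. For any cycle weight $w \in L[B^n]$, powers satisfy $w^m \in L[B^{nm}]$, so Gelfand's formula gives $\sigma(w) = \lim_{m\to\infty} \|w^m\|^{1/m} \leq \lim_{m\to\infty} (C \lambda^{nm})^{1/m} = \lambda^n < 1$, and (3) then delivers (4). The main subtlety throughout is the Perron-Frobenius reduction: cycles in $\cS[B]$ demand the orbit equality $\op\cG.\Gamma = \op\cG.F^{-1}(\Gamma)$, whereas Thurston's theorem only produces the weaker $\subseteq$-invariance; passing to an irreducible sub-obstruction, where the absence of zero rows forces exact equality $F^{-1}(\Gamma_0) = \Gamma_0$, is what bridges these two notions.
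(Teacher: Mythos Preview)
Your proof is correct and fleshes out precisely the argument the paper only gestures at: the paper gives no proof beyond the sentence ``Thurston's characterization of rational functions \cite{DH1} and the preceding functoriality imply the following,'' and you have supplied the details of how those two ingredients combine. Your handling of the forward direction via Proposition~\ref{prop:whenzero}(2), the reverse direction via decomposing $\tilde F$ into $n$ factors to produce a closed edge-path, and statement (4) via Gelfand's formula all match what the paper's one-line justification intends.

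The one point worth highlighting is your explicit Perron--Frobenius reduction in the reverse direction. Thurston's theorem in its original form \cite{DH1} produces only an $F$-\emph{stable} multicurve ($F^{-1}(\Gamma_1) \subseteq \Gamma_1$), whereas cycles in the strata scrambler require the orbit equality ${\op\cG}.\Gamma = {\op\cG}.F^{-1}(\Gamma)$, which for $\Gamma \in \sfT$ amounts to complete invariance $F^{-1}(\Gamma) = \Gamma$. Your passage to an irreducible sub-obstruction $\Gamma_0$, where the absence of zero rows forces $F^{-1}(\Gamma_0) = \Gamma_0$, is exactly the bridge needed and is a standard maneuver the paper leaves implicit. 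This is a genuine detail that a reader filling in the paper's terse justification would need to supply, and you have identified and executed it cleanly.
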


\section{Contraction on $\cG$ implies contraction on $\cV$}
\label{secn:twistgroups}

In this section, we prove that if $B$ is contracting on ${\op \cG}$, then $\whatsigma(L[B])<1$. 

\subsection{Algebraic preliminaries}
\label{subsecn:prelim}

We return to a general $G$-biset $B$ that is contracting on a general
group $G$.  Let $\sfX$ be a basis of $B$, and fix a word norm
$|\cdot|$ on $G$. The \emph{contracting coefficient} is
\[\widehat{\rho}(B):=\limsup_{n\to \infty} \sqrt[n]{ \limsup_{|g|\to
\infty}\max_{v \in \sfX^n} \frac{|v@g|}{|g|}}.\] 
The contraction coefficient is independent of the norm on $G$ and of
the chosen basis, and $B$ is contracting on $G$ if and only if
$\widehat{\rho}(B)<1$; see \cite[Prop. 4.3.12]{MR4475129}.

Unwinding the definition, we see that for any $\rho$ with
$\widehat{\rho}<\rho<1$, there exist $n_0$ and $R_0$ such that for all
$g \in G$ with $|g| > R_0$, and all $v\in \sfX^*$ with $|v| \geq n_0$,
\[ |v@g| < \rho^{|v|}|g|. \]

Let $\rho, n_0, R_0$ be as above. 

\begin{prop}
\label{prop:basiccont}
Suppose $B$ is a contracting left-free-covering $G$-biset with basis $\sfX$.
\begin{enumerate}
\item For any finite set $C \subset G$, there exists a finite set $\widehat{C} \supset C$ such that $\sfX\widehat{C} \subset \widehat{C}\sfX$. Equivalently, $\sfX(\widehat{C}) \subset \widehat{C}$. 
\item For a finite set $C$, the iterated function system on $G$ given by the collection of maps 
\[ \calL:=\{ \;g \mapsto c_1 (\sfx @ g) c_2 \;| \;c_1, c_2 \in C, \sfx \in \sfX\}\]
is uniformly contracting: for any $n$ and $\ell_1, \ldots, \ell_n \in \calL$, we have 
\[ |\ell_1\ell_2\cdots \ell_n (g)| \le\rho^{n}|g|+O(1)\]
where the implicit constant is independent of $g$ and $n$. In particular, it 
has a nucleus: a finite set $\cN(\calL)$ depending on $C$ and on $\sfX$ such that for all $g \in G$, there exists $N \in \N$ such that for any $n \geq N$ and any sequence $\ell_1, \ell_2, \ldots, \ell_n \in \calL$, we have 
\[ \ell_1\ell_2\cdots \ell_n (g) \in \cN(\calL).\]
\end{enumerate}
\end{prop}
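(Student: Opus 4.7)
For part (1), the plan is to invoke the contraction hypothesis, which supplies the basic IFS $\{g \mapsto \sfx @ g : \sfx \in \sfX\}$ on $G$ with a \emph{nucleus} $\cN$: a finite, $\sfX$-forward-invariant subset into which every $G$-orbit eventually lands. Since $C$ is finite, there is a uniform integer $N$ with $\sfX^N(C) \subset \cN$. I would set
\[
\widehat{C} := \cN \cup \bigcup_{k=0}^{N-1}\sfX^k(C),
\]
and check directly that this is finite, contains $C$, and satisfies $\sfX(\widehat{C}) \subset \widehat{C}$, by splitting into the cases $c \in \cN$ (covered by $\sfX(\cN) \subset \cN$) and $c \in \sfX^k(C)$ with $k < N$ (image lies in $\sfX^{k+1}(C) \subset \widehat{C}$).

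For part (2), I would first unfold each composition using the biset identity
\[
\sfx @ (ab) = (\sfx @ a) \cdot (\sfy @ b),
\]
where $\sfy \in \sfX$ is the unique basis element satisfying $\sfx\, a = (\sfx @ a)\,\sfy$. An induction on $n$ rewrites
\[
\ell_n \circ \cdots \circ \ell_1(g) = A_n \cdot (\sfv_n @ g) \cdot B_n
\]
with $\sfv_n \in \sfX^n$ and with $A_n, B_n$ each products of $n$ factors, every one lying in $\widehat{C}$ (by part (1), every element of the form $\sfx'@c$ with $c \in \widehat{C}$ is again in $\widehat{C}$). A triangle inequality gives the weak bound $|A_n|, |B_n| \le n\widehat{C}_0$ with $\widehat{C}_0 := \max_{c \in \widehat{C}}|c|$.

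To convert this $O(n)$ estimate into an $O(1)$ constant, I would group the iteration into super-steps $L_j := \ell_{jn_0} \circ \cdots \circ \ell_{(j-1)n_0+1}$, where $n_0$ comes from the contraction coefficient: for $|v| \ge n_0$ and $|h| > R_0$, one has $|v @ h| \le \rho^{|v|}|h|$. Within a super-step the multipliers have length at most $n_0\widehat{C}_0$, and contraction of the core together with a uniform bound in the regime $|h| \le R_0$ yields
\[
|L_j(h)| \le K_0 + \rho^{n_0}|h|
\]
for a fixed $K_0$. Telescoping $m$ super-steps and summing the geometric series $\sum_{i=0}^{m-1}\rho^{in_0}$ gives
\[
|L_m \circ \cdots \circ L_1(g)| \le \frac{K_0}{1 - \rho^{n_0}} + \rho^{mn_0}|g|,
\]
a geometric-plus-constant bound independent of $m$. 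Folding in a residual of $r < n_0$ iterations (at the cost of a slightly larger $\rho^* \in (\widehat{\rho},1)$ to absorb the factor $\rho^{-r}$) yields the stated form $\rho^n|g| + O(1)$. The nucleus $\cN(\calL)$ then arises as follows: this bound forces every orbit into a fixed finite ball $B(R_*) \subset G$; applying the bound again to inputs in $B(R_*)$ shows that the $\calL$-forward-orbit of $B(R_*)$ is bounded, hence finite, and can be taken as $\cN(\calL)$.

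The main technical obstacle is carrying out the super-step telescoping uniformly across the two regimes $|h| > R_0$ (contraction) and $|h| \le R_0$ (boundedness), and absorbing the residual $r$ iterations together with the factor $\rho^{-r}$ into a single $O(1)$ constant. Part (1) is the essential input: it ensures that after biset rewriting, every multiplier lies in the single finite set $\widehat{C}$, so the individual factors of $A_n, B_n$ have uniformly bounded word-length---the only remaining growth is the linear count of factors, which the super-step telescoping converts into a convergent geometric series.
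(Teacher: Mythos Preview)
Your argument is correct. Part (1) is essentially the same construction as the paper's: both form $\widehat{C}$ as a finite union of iterated $\sfX$-images of a set that eventually lands in a nucleus, and verify $\sfX(\widehat{C})\subset\widehat{C}$ by casework.

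Part (2) is where you genuinely diverge. The paper first enlarges $C$ via part (1) so that $\sfX(C)\subset C$ and $\cN(\sfX)\subset C$, then records the finer containment
\[
\ell_1\cdots\ell_n(g)\ \in\ C\,\sfX(C)\,\sfX^2(C)\cdots\sfX^n(C)\,\sfX^n(g)\,\sfX^n(C)\cdots\sfX(C)\,C,
\]
tracking \emph{which} power $\sfX^k$ acts on each flanking $C$. Since $\sfX^k(C)\subset\cN(\sfX)$ once $k\ge n_1$, and the paper invokes the absorption property $\cN(\sfX)^k\subset\cN(\sfX)$, the long products on either side of $\sfX^n(g)$ collapse to the fixed finite set $C^{n_1+1}$, yielding the $O(1)$ flanking term in one stroke with no telescoping. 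Your approach instead bounds each flanking factor only by membership in $\widehat{C}$, accepts the resulting linear count of factors, and then eliminates that growth by grouping into super-steps of length $n_0$ and summing a geometric series. Your route is a bit longer and requires handling the residual block, but it is more self-contained: it uses only the raw contraction inequality $|v@h|\le\rho^{|v|}|h|$ and part (1), and does not appeal to any multiplicative closure of the nucleus. The paper's route is shorter once that nucleus machinery is available.
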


\begin{proof} (1) Let $B(R)$ be the ball in $G$ with radius $R$.  We
choose $R$ so that $R\ge R_0$, $C\subset B(R)$ and
$\sfX^{n_0}(B(R_0))\subset B(R)$.  The contraction inequality
$|v@g|<\rho^{|v|} |g|$ implies that
$\sfX^{n_0}(B(R)-B(R_0))\subset B(R)$.  Thus $\sfX^{n_0}(B(R))\subset
B(R)$.  Setting $\widehat{C}:=B(R) \cup \sfX(B(R)) \cup \cdots \cup
\sfX^{n_0}(B(R))$, it follows that $\sfX(\widehat{C}) \subset
\widehat{C}$.

(2) By (1) we may assume by enlarging $C$ that $\sfX(C) \subset C$ and
$\cN(\sfX)\subset C$, where $\cN(\sfX)$ is the nucleus for the
$G$-biset $B$ with respect to the basis $\sfX$.  An easy induction
argument shows that
 \[ \ell_1\ell_2\cdots \ell_n(g) \in C \sfX(C)
\cdots \sfX^n(C) \sfX^n(g) \sfX^n(C) \cdots \sfX(C)C.\]
Since $C$ is finite and $B$ is contracting, there exists $n_1$ such
that $n\geq n_1$ implies $\sfX^n(C) \subset \cN(\sfX)$.  Since
$\cN(\sfX)\cN(\sfX) \subset \cN(\sfX)$ and more generally $\cN(\sfX)^k
\subset \cN(\sfX)$ for all $k \geq 2$, we conclude the factors to the
left and right of the set $\sfX^n(g)$ in the above expression
stabilize.  For $n \geq n_1$, we have \[ \ell_1\ell_2\cdots\ell_n(g)
\in C \sfX(C) \cdots \sfX^{n_1-1}(C)\cN(\sfX)\sfX^n(g)\cN(\sfX)X^{n_1-1}(C)\cdots X(C)C.\]
Since $\sfX(C) \subset C$ and $\cN(\sfX)\subset C$, we conclude \[
\ell_1\ell_2\cdots\ell_n(g) \in C^{n_1+1} \sfX^n(g) C^{n_1+1}.\] Put
$m:=\max\{|g| : g \in C\}$. If $g\notin C$, then $g\notin B(R_0)$ and
so $|v@g| < \rho^{|v|}|g|$ for every $v\in \sfX^n$.  Hence \[
|\ell_1\ell_2\cdots \ell_n(g)| \leq 2(n_1+1)m+(\rho^{n}|g|+m)\] and
the conclusion follows.

\end{proof}

Suppose now $B$ is a contracting Hurwitz biset.  Let $\sfX$ be a basis of $B$.  To ease notation in what follows, we write simply $\sfx$ for elements of $\sfX$, instead of $f_{\sfx}$.
The implication (1) implies (3) in the statement of Theorem \ref{thm:main} will follow from 

\begin{prop}
\label{prop:coeffs}
Suppose $B$ is a contracting Hurwitz biset.  Then $\widehat{\sigma}(L[B]) \leq \widehat{\rho}(B)$. 
\end{prop}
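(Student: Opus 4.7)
The plan is to fix any $\rho$ with $\widehat{\rho}(B)<\rho<1$ and show that every $W \in L[B^n]$ satisfies $\|W\| \leq K\rho^n$ with $K$ independent of $n$ and $W$. Then $\widehat{\sigma}_n(L[B])^{1/n} \leq K^{1/n}\rho \to \rho$, so $\widehat{\sigma}(L[B])\leq \rho$; taking $\rho \downarrow \widehat{\rho}(B)$ gives the conclusion.

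The core idea is to exploit the two parallel descriptions in Proposition~\ref{prop:gtilde} of an induced map $\text{Tw}(\Gamma_{\sft}) \to \text{Tw}(\Gamma_{\widetilde{\sft}})$: algebraically, as a bounded bilateral perturbation $g \mapsto c_1 \cdot (f_{\sfx}@g)\cdot c_2$ with $c_1, c_2$ in a finite set $C$; and linearly, as an affine perturbation $\vec{g} \mapsto L[f_{\sfx}, \Gamma_{\sft}]\vec{g} + \vec{d}$ with $\vec{d}$ in a finite set. Fix a basis $\sfX$ of $B$ and a transversal $\sfT$, so that any $W \in L[B^n]$ is realized as $L_n \circ \cdots \circ L_1$ for a word $v = \sfx_1 \cdots \sfx_n \in \sfX^n$ and a starting $\Gamma_{\sft_0} \in \sfT$, with $L_i := [L[\sfx_i, \Gamma_{\sft_{i-1}}]]$. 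Starting from an arbitrary positive multitwist $g_0 \in \text{Tw}(\Gamma_{\sft_0})$, I would iterate Proposition~\ref{prop:gtilde} to produce $g_0, g_1, \ldots, g_n$ satisfying both descriptions simultaneously.

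On the algebraic side, unrolling gives $g_n$ as the image of $g_0$ under an $n$-fold composition of maps from $\calL = \{g \mapsto c_1(\sfx@g)c_2 : c_1, c_2 \in C, \sfx \in \sfX\}$, so Proposition~\ref{prop:basiccont}(2) yields $|g_n| \leq \rho^n |g_0| + K_0$ with $K_0$ independent of $n$ and $g_0$. On the linear side, unrolling gives $\vec{g_n} = W\vec{g_0} + \vec{E_n}$, where the error vector $\vec{E_n}$ depends on $v$ and on the iterated choices of $\vec{d_i}$ but not on $g_0$. To pass between the two, I compare the ${\op \cG}$-word norm restricted to $\text{Tw}(\Gamma)$ with the $\ell^1$ norm on $\R[\Gamma]$: the bound $|g| \leq K_2 \|\vec{g}\|_1$ is a triangle inequality, while the reverse bound $\|\vec{g}\|_1 \leq K_1 |g|$ is the undistortion (quasi-isometric embedding) of the free abelian multitwist subgroup in the mapping class group, which I would quote from Farb--Lubotzky--Minsky.

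Combining, $\|\vec{g_n}\|_1 \leq K_1 K_2 \rho^n \|\vec{g_0}\|_1 + K_1 K_0$, and from $W\vec{g_0} = \vec{g_n} - \vec{E_n}$ one has $\|W\vec{g_0}\|_1 \leq K_1 K_2 \rho^n \|\vec{g_0}\|_1 + K_1 K_0 + \|\vec{E_n}\|_1$. To absorb the additive errors, given any nonzero nonneg rational $\vec{v} \in \R_{\geq 0}[\Gamma_{\sft_0}]$, set $\vec{g_0} := M\vec{v}$ with $M$ a large enough positive integer; dividing by $M$ and sending $M \to \infty$ yields $\|W\vec{v}\|_1 \leq K_1 K_2 \rho^n \|\vec{v}\|_1$. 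Since the matrix of $W$ has nonnegative entries, its $\ell^1\to\ell^1$ operator norm is attained on nonnegative unit vectors; density gives $\|W\|_{\ell^1 \to \ell^1} \leq K_1 K_2 \rho^n$, and equivalence of norms on the finite-dimensional space $\cV$ delivers the uniform bound. The main obstacle is the undistortion quote; the rest is careful bookkeeping of the affine error terms $\vec{E_n}$ and of the additive constants arising on the $\calL$-side.
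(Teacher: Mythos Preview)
Your proposal is correct and follows essentially the same approach as the paper: both iterate Proposition~\ref{prop:gtilde} to run the algebraic IFS on ${\op\cG}$ (controlled by Proposition~\ref{prop:basiccont}) in parallel with the affine IFS on $\cV$, invoke undistortion of multitwist subgroups (the paper cites the same Farb--Lubotzky--Minsky result), and then kill the additive errors by a scaling/limit argument in the starting vector. Your write-up is slightly more explicit about the endgame---the observation that a nonnegative matrix achieves its $\ell^1$ operator norm on nonnegative vectors, and the scaling $\vec{g}_0 = M\vec{v}$, $M\to\infty$---where the paper simply passes to the $\limsup$ of the quotients $|a_n\circ\cdots\circ a_1(\vec{g}_1)|/|\vec{g}_1|$ as $|\vec{g}_1|\to\infty$; these are the same maneuver.
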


Our next result will be to relate two notions of size for multitwists.
Recall that if $\zG$ is a multicurve, then we have a group isomorphism
$\zi_\zG\co \text{Tw}(\zG)\to \Z[\zG]\subseteq \R[\zG]$ and that
$\R[\zG]$ may be identified with a subspace of $\cV$.  We equip
$\R[\Gamma]$ with the $\ell^1$-norm, so that $|\sum_{\zg\in \zG}a_\zg
\zg|:=\sum_{\gamma \in \Gamma}|a_\gamma|$.  We will need the following
result; see \cite[\S 2.3]{MR1813237} and \cite[Cor. 4]{MR2471596}.  We
suppose $\cG$ is equipped with some generating set; we denote by
$|g|_\cG$ the resulting norm of an element $g \in \cG$.

\begin{prop}
\label{prop:undistorted} For any multicurve $\Gamma$, the subgroup $\Tw(\Gamma)$ is undistorted in $\cG$.  That is, there is a constant $c$ depending on the generating sets for $\cG$ and $\Tw(\Gamma)$ such that for any $g \in \Tw(\Gamma)$, we have 
\[ |\zi_\zG(g)| \leq c |g|_\cG.\]
\end{prop}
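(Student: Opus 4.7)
The plan is to derive the claim from the theory of annular subsurface projections in the mapping class group, as developed in \cite{MR1813237, MR2471596}. Writing $g \in \Tw(\Gamma)$ uniquely as $g = \prod_{\delta \in \Gamma}\twist(\delta)^{a_\delta(g)}$, we have $\zi_\Gamma(g) = \sum_\delta a_\delta(g)\cdot\delta$, so $|\zi_\Gamma(g)| = \sum_{\delta \in \Gamma}|a_\delta(g)|$. It therefore suffices to bound each $|a_\gamma(g)|$ individually by a uniform multiple of $|g|_\cG$ and then sum over the finite set $\Gamma$.

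For each $\gamma \in \Gamma$ I would construct a ``twisting coefficient'' $\tau_\gamma \co \cG \to \Z$ with two properties: (i) $\tau_\gamma$ is coarsely Lipschitz with respect to a word metric on $\cG$, with Lipschitz constant $K$ depending only on the generating set; and (ii) $\tau_\gamma(g) = a_\gamma(g) + O(1)$ whenever $g \in \Tw(\Gamma)$. Concretely, choose an auxiliary arc or simple closed curve $\alpha_\gamma$ in $S^2 - P$ that has essential transverse intersection with $\gamma$ but is disjoint from $\Gamma \setminus \{\gamma\}$ (possible since the components of $\Gamma$ are pairwise disjoint), and define $\tau_\gamma(g)$ as the algebraic intersection number of $g(\alpha_\gamma)$ with $\alpha_\gamma$ measured in the annular cover of $S^2 - P$ corresponding to $\gamma$. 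For (i), any single generator of $\cG$ modifies the annular projection of $\alpha_\gamma$ by a bounded number of twists about $\gamma$, and a max over the finite generating set produces a uniform $K$. For (ii), the factor $\twist(\gamma)^{a_\gamma(g)}$ contributes exactly $a_\gamma(g)$ to the annular intersection, while each commuting factor $\twist(\delta)^{a_\delta(g)}$ with $\delta \neq \gamma$ fixes $\alpha_\gamma$ by construction, hence contributes nothing.

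Combining (i) and (ii) and summing over $\gamma \in \Gamma$,
\[
|\zi_\Gamma(g)| \;=\; \sum_{\gamma \in \Gamma} |a_\gamma(g)| \;\leq\; \sum_{\gamma \in \Gamma}\big(|\tau_\gamma(g)| + O(1)\big) \;\leq\; |\Gamma|\cdot K\cdot|g|_\cG + O(1),
\]
and the additive constant can be absorbed into a slightly larger multiplicative constant $c$ by noting that $\zi_\Gamma$ vanishes on the identity and $|g|_\cG \geq 1$ otherwise. The main technical obstacle is establishing the coarse Lipschitz property in (i), which is the heart of Masur--Minsky's subsurface projection machinery; the rest is bookkeeping, and the reduction from the multitwist setting to one coordinate at a time exploits pairwise disjointness of the components of $\Gamma$ in an essential way through our choice of $\alpha_\gamma$.
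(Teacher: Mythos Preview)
Your argument is correct and follows the standard route via Masur--Minsky annular subsurface projections: build a coarse-Lipschitz twisting coefficient $\tau_\gamma$ for each $\gamma\in\Gamma$, observe that the commuting twists about the other $\delta\in\Gamma$ fix your carefully chosen marker $\alpha_\gamma$, and sum. The only detail worth spelling out more fully is the existence of $\alpha_\gamma$: after cutting $S^2-P$ along $\Gamma\setminus\{\gamma\}$, the curve $\gamma$ remains essential and non-peripheral in the component $Y$ containing it (otherwise it would be isotopic to some $\delta$ or to a puncture), and since $Y$ is then a planar surface with at least four ends you can find the required arc or curve inside $Y$.

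The paper takes a genuinely different, more algebraic shortcut specific to punctured spheres. It observes (using the lantern relation and the explicit presentation of $\PMod(S^2,P)$) that $\Tw(\Gamma)$ injects into the abelianization $\cG^{ab}$, which is free abelian; since the quotient $\cG\to\cG^{ab}$ is $1$-Lipschitz and subgroups of $\Z^k$ are always undistorted, the result follows. This avoids the subsurface-projection machinery entirely but depends on a fact about $\cG^{ab}$ that fails for higher-genus mapping class groups (where the abelianization is finite). Your approach, by contrast, works uniformly for all finite-type surfaces and is in fact closer in spirit to the references the paper itself cites for the proposition; the paper's approach is shorter once the abelianization computation is in hand.
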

\begin{proof}[Suketch of a proof] Using the lantern relation \cite[Proposition
5.1]{fm} and the description of $\cG$ in \cite[Section 9.3]{fm}, one can
prove that $\Tw(\Gamma)$ injects into the Abelianization $\cG^{ab}$ of $\cG$.  The norm on $\cG$ descends to a norm on $\cG^{ab}$ such that the natural quotient map $\cG \ni g \mapsto g^{ab} \in \cG^{ab}$ is $1$-Lipschitz. Since subgroups of free abelian groups are always undistorted, we have 
\[ |\iota_\Gamma(g)| \asymp |g^{ab}| \leq |g|.\]
\end{proof}

Before returning to ${\op \cG}$, we find a convenient generating set $S$ for $\cG$.  Let $\sfT$ be an orbit transversal for the action of $\cG$ on multicurves.  For each $\Gamma_{\sft} \in \sfT$, and for each $\gamma \in \Gamma_{\sft}$, we add the positive (right) Dehn twist $\twist(\gamma)$ about $\gamma$ to our set $S$, along with its inverse. 

We then extend this set of twists to a symmetric set of generators of $\cG$.  We denote by $|\cdot|_\cG$ the corresponding word norm on $\cG$.

By construction of the generating set $S$, for any $\Gamma \in \sfT$ and any $g \in \Tw(\gamma)$, we have $|g|_\cG \leq |\zi_\zG(g)|$.  Since our transversal $T$ consists of finitely many multicurves, we can take the maximum of the constants $c$ in Proposition \ref{prop:undistorted} over all elements of $T$, and conclude 

\begin{cor}
\label{cor:comparable}
For any $\Gamma \in \sfT$, and any $g \in \Tw(\Gamma)$, we have 
\[ |\zi_\zG(g)| \asymp |g|_\cG,\]
where the implicit constant is independent of $g$. 
\end{cor}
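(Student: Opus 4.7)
The plan is to combine the two inequalities already staged in the text immediately preceding the corollary and then use finiteness of the transversal $\sfT$ to absorb dependence on $\Gamma$ into a single implicit constant.

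For the lower bound $|g|_\cG \leq |\iota_\Gamma(g)|$, I would unwind the construction of the generating set $S$. By design, for every $\Gamma_{\sft} \in \sfT$ and every $\gamma \in \Gamma_{\sft}$, the Dehn twist $\twist(\gamma)$ and its inverse are already in $S$. Any $g \in \Tw(\Gamma)$ can be written as $g = \prod_{\gamma \in \Gamma}\twist(\gamma)^{a_\gamma}$, where $\iota_\Gamma(g) = \sum_{\gamma \in \Gamma} a_\gamma \gamma$. Concatenating the $\sum_\gamma |a_\gamma|$ generators from $S$ that represent the factors gives a word of length $|\iota_\Gamma(g)|$, which proves $|g|_\cG \leq |\iota_\Gamma(g)|$.

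For the upper bound, I would apply Proposition \ref{prop:undistorted} directly to the finitely many multicurves $\Gamma \in \sfT$. For each such $\Gamma$ there is some constant $c_\Gamma > 0$ with $|\iota_\Gamma(g)| \leq c_\Gamma |g|_\cG$ for every $g \in \Tw(\Gamma)$. Because $\sfT$ is finite, we can set
\[ c := \max_{\Gamma \in \sfT} c_\Gamma, \]
which is a positive real number independent of $\Gamma$ and $g$. Combining the two bounds gives
\[ |g|_\cG \leq |\iota_\Gamma(g)| \leq c\,|g|_\cG \]
uniformly for $\Gamma \in \sfT$ and $g \in \Tw(\Gamma)$, which is the asserted $\asymp$.

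There is essentially no obstacle here: the real work is packaged into Proposition \ref{prop:undistorted}, and the only subtlety is to notice that one is quantifying over the finite set $\sfT$, not over all multicurves. The one thing worth double-checking is that the symmetric extension of $S$ to a generating set of $\cG$ does not destroy the lower bound, but this is immediate because adding generators can only decrease the word length, and the lower bound is proved by exhibiting an explicit word in the twist generators already contained in $S$.
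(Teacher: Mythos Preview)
Your proof is correct and follows essentially the same approach as the paper: the text immediately preceding the corollary already gives exactly this argument, first noting that the inclusion of the relevant Dehn twists in $S$ yields $|g|_\cG \leq |\iota_\Gamma(g)|$, and then taking the maximum of the constants from Proposition~\ref{prop:undistorted} over the finite set $\sfT$. Your write-up simply makes both steps a bit more explicit.
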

\medskip

\subsection{Proof of Proposition \ref{prop:coeffs}}
\label{subsecn:coeffs}

The strategy: by making finite corrections to each, we can identify
the IFS on $\cV$ given by $\cS[B]$ with the restriction of the IFS
on ${\op \cG}$ to twist subgroups about a representing set of multicurves.
After the correction, the IFS on $\cV$ will be affine, instead of
linear, and the IFS on ${\op \cG}$ will have elements that look like $g
\mapsto c_1\cdot (\sfx @g)\cdot c_2$, where $c_1, c_2$ are drawn from
a finite set. The philosophy is that such finite corrections do not
alter contraction ratios for either.

We now suppose we are in the setup of \S\S \ref{subsecn:ifs} and \ref{subsecn:prelim}; we fix $\rho$ with $\widehat{\rho}(B)<\rho<1$.  Suppose $a_1, a_2, \ldots$ is a sequence of elements of $L[B]$.  Fix $n \in \N$.  Our goal is to show that the operator norms of the compositions satisfy 
\begin{equation}
\label{eqn:control}
||a_n \circ\cdots \circ  a_2 \circ  a_1|| \le \rho^n\; \text{as} \; n \to\infty.
\end{equation}
From the definition of $L[B]$, for each $n \geq 1$, there exists
$\Gamma_n \in \sfT$ and $\sfx_n \in \sfX$ such that $a_n \in
\text{End}(\cV)$ is induced by some $L[\sfx_n, \Gamma_n]: \R[\Gamma_n]
\to \R[\Gamma_n']$ where $\Gamma_n':=\sfx_n^{-1}(\Gamma_n)$.  

Suppose now we are given $g \in \Tw(\Gamma_1)$, which we identify with
$\zi_\zG(g) \in \R[\Gamma_1]$. Put $g_1:=g$. We define now inductively
a sequence $g_n \in \Tw(\Gamma_n)$ by setting
$g_{n+1}=\widetilde{g}_n$, using the map in
Proposition~\ref{prop:gtilde}.

Proposition~\ref{prop:gtilde} implies that, on the one hand, as
elements of ${\op \cG}$, the terms of the sequence $(g_n)_n$ form an orbit
of the IFS on ${\op \cG}$ given by the collection of operators $\{g \mapsto
c_1\cdot \sfx@g \cdot c_2 : \sfx \in \sfX, c_1, c_2 \in C\}$ as in
Proposition \ref{prop:basiccont}.  Using the notation $\vec{g}_n$ as
in Proposition~\ref{prop:gtilde} and applying Corollary
\ref{cor:comparable}, we conclude
\begin{equation}
\label{eqn:gseqest}
\begin{array}{lcr}
|\vec{g}_n| \asymp |g_{n}|_\cG & \le & \rho^n|g|_\cG +O(1).
\end{array}
\end{equation}
On the other hand, as vectors, Proposition~\ref{prop:gtilde} implies
that we have 
\[\vec{g}_{n+1}= L[\sfx_n, \Gamma_n]\cdot
\vec{g}_n + \vec{d}_n=a_n\cdot \vec{g}_n+\vec{d}_n,\]
where the translation term $\vec{d}_n$ is drawn from a finite set,
$\vec{D}$.\footnote{The sets $C$ and $\vec{D}$ represent rather
different objects, but play similar roles in being finite
``corrections''.}  We write this succinctly as
\[ \vec{g}_{n+1}=M_n\cdot \vec{g}_n,\]
where now the map $M_n: \cV \to \cV$ is affine, rather than linear. 

In summary, the terms of the sequence $(\vec{g}_n)_n$ form an orbit of
the IFS on $\cV$ given by the collection of operators $\{M(\vec{v}) :=
a(\vec{v})+\vec{d} : a \in L[B], \vec{d} \in \vec{D}\}$.

We have for all compositions of a fixed length $n$ and every $v\in\cV$ that
\[ a_n \circ\cdots \circ  a_2 \circ  a_1 (\vec{v})=M_n \circ\cdots \circ  M_2 \circ  M_1(\vec{v})+O(1)\]
so
\[ |a_n \circ\cdots \circ  a_2 \circ  a_1(\vec{v})| \le |M_n \circ\cdots \circ  M_2 \circ  M_1(\vec{v})|+O(1).\]
Taking $\vec{v}=\vec{g}_1$, this and the estimate (\ref{eqn:gseqest}) 
imply that 
\[ |a_n \circ\cdots \circ  a_2 \circ  a_1(\vec{g}_1)| \le
|\vec{g}_n|+O(1) \le b\rho^n|g_1|_\cG+O(1)\le
c\zr^n|\vec{g}_1|+O(1)\]
for multiplicative constants $b$ and $c$ which are universal in
$n$ and an implied constant which depends on $n$.  It follows that
the lim sup as $|\vec{g}_1|\to \infty $ of the quotients 
\[|a_n \circ\cdots \circ  a_2 \circ  a_1(\vec{g}_1)|/|\vec{g}_1|\] 
is at most $c \zr^n$. 
We conclude that $\widehat{\sigma}(L[B]) \leq \rho$. Letting $\rho
\downarrow \widehat{\rho}(B)$, we conclude that the joint spectral
radius $\widehat{\sigma}(L[B])$ is bounded by the contraction factor
of the Hurwitz biset $B$. This completes the proof.

\section{Correspondence on moduli space and IFS on $\cT$}
\label{sec:corr}

Central to our development is the fact that Teichm\"uller spaces of marked spheres admit natural descriptions both in terms of maps and in terms of paths.  

\subsection{Moduli and Teichm\"uller spaces} 
\label{subsec:modteich}
Fix a subset $P \subset S^2$, this time with $\#P\geq 4$, and now identify $S^2$ with the complex projective line $\Pone$.  Here, the restriction on the cardinality is to eliminate mention of uninteresting special cases.  As a set, the moduli space $\cM:=\Moduli(S^2, P)$ is defined to be the set of injections $\iota: P \hookrightarrow \Pone$ modulo the action of $\Aut(\Pone)$ by post-composition; it comes with a basepoint $\star$ represented by the inclusion $P \hookrightarrow \Pone$.  As a complex space, it is isomorphic to a hyperplane complement in $\C^{\#P-3}$.  

The Teichm\"uller space $\cT:=\Teich(S^2, P)$ admits two distinct descriptions which, due to our choice of identification $S^2=\Pone$, are canonically identified.  As this is well-known, our treatment here is brief; see \cite[\S 2]{MR3054977} for details, which references the key fact of the contractability of homeomorphism groups of spheres fixing three marked points.  

On the one hand, we have a description as a double coset space
\[ \cT^{\text{maps}}:=\Aut(\Pone) \backslash \Homeo(S^2)/\Homeo_0(S^2,P)=
\{\text{homeos.}\; \tau: S^2 \to \Pone\}/\sim\]
where $\tau_1 \sim \tau_2$ if there exists $M \in \Aut(\Pone)$ with $\tau_2$ isotopic to $M \circ \tau_1$ through homeomorphisms agreeing on $P$.  

Recalling that the universal cover of a (suitably nice) space is constructed as homotopy classes of paths from a basepoint, we have a second description 
\[ \cT^{\text{paths}}:=\{\tau:[0,1] \to \cM,  \; \tau(0)=\star\}/\simeq\]
where now $\simeq$ denotes homotopy relative to endpoints. 

There is a natural map $\cT^{\text{maps}}\to \cT^{\text{paths}}$ given as follows.  Given $\tau \in \cT^{\text{maps}}$ there is a unique Teichm\"uller geodesic joining the basepoint $*=\text{id}$ to $\tau$. This gives a one-parameter family of homeomorphisms which, when evaluated at $P$, gives a motion of $P$ and hence a path $\tau$ in moduli space starting at the basepoint $*=\text{id}_P$.   Alternatively: forgetting all but three points of $P$, the resulting space of homeomorphisms is contractible, so there is an isotopy from $\tau$ to the identity relative to this set of three points; evaluating this isotopy on $P$ gives a path to the basepoint whose reverse is the desired element of $\cT^{\text{paths}}$.  There is a similarly natural map $\cT^{\text{paths}}\to \cT^{\text{maps}}$ given by isotopy extension of a motion of $P$.  Abusing notation, we use the same symbol $\tau$ to stand for an element of $\cT$, a representing homeomorphism $S^2 \to \Pone$, and a representing path in moduli space starting at $\star$. 

We equip both $\cT$ and $\cM$ with the Teichm\"uller length metrics; they are both complete and unbounded.  We denote balls in these metrics by $B(\; , \;)$ and the length of a rectifiable path by $|\; |$. 

\subsection{Mapping class group as fundamental group of moduli space}  
\label{subsec:mcg}

The fundamental group of moduli space also admits twin descriptions in terms of maps and of paths.  On the one hand,
\[ \cG:=\pi_1(\cM, \star)^{\text{maps}}:=\{g: (S^2, P) \to (S^2, P)\}/\sim\]
where the equivalence is isotopy relative to $P$. On the other,
\[ \pi_1(\cM, \star)^{\text{paths}}:=\{g: [0,1] \to \cM, g(0)=g(1)=\star\}/\sim\]
where the equivalence is homotopy relative to the endpoints at $\star$.  Again there are canonical identifications between these descriptions, but with a caveat.  As maps, the expression $g_1\circ g_2$ means apply $g_2$ first.  However, as paths, the expression $g_1\cdot g_2$ means apply $g_1$ first.  So when group operations are taken
into account, $\pi_1(\cM,*)^{\text{paths}}={\op \cG}$.  We again abuse notation so that
e.g. the symbol $g$ stands simultaneously for an element of ${\op \cG}$, a
representing map, and a representing loop based at $\star$.

\subsection{The  action of $\cG$ on $\cT$}  
\label{subsec:mcgact}
Suppose $g \in \cG$ and $\tau \in \cT$. We wish the induced map $\sigma_g: \cT \to \cT$ to be given by lifting complex structures under $g$. So when $g$ and $\tau$ are represented by maps, we set  
\[ g.\tau:=\tau \circ g.\]
This defines a left action of ${\op \cG}$ since
\[ (g_1 \cdot g_2).\tau = (g_2 \circ g_1). \tau:=\tau \circ (g_2 \circ g_1) = (\tau \circ g_2) \circ g_1 = g_1.(g_2.\tau).\]
When represented by paths, we set 
\[ g.\tau:=g \cdot \tau.\]
This also defines a left action of ${\op \cG}$ on $\cT$ since 
\[ (g_1 \cdot g_2).\tau :=(g_1 \cdot g_2)\cdot \tau=g_1\cdot(g_2 \cdot \tau)=g_1.(g_2.\tau).\]
The two interpretations coincide.  With the operation $\cdot$ the one in ${\op \cG}$, we have in either interpretation
\[ \sigma_{g_1\cdot g_2} = \sigma_{g_1} \circ \sigma_{g_2}.\]

In \S \ref{subsecn:two} below, we record similar results for Hurwitz bisets. 

\subsection{The action of $B$ on $\cT$}
\label{subsec:liftB}
 Again, below we omit, for example, details showing that various maps are well-defined, etc. See \cite{DH1} and \cite{koch:endos}.  Suppose $B$ is a Hurwitz biset. 
An element $f \in B$ determines a holomorphic self-map $\sigma_f: \cT \to \cT$  by pulling back complex structures:  
\[
\xymatrix{
(S^2,P) \ar@{.>}[r]^{\tilde{\tau}=\sigma_f(\tau)} \ar[d]_{f}& (\Pone, P_{\tilde{\tau}}) \ar@{.>}[d]^{R}\\
(S^2,P) \ar[r]_(.48){\tau} & (\Pone, P_\tau).
}
\]
In the above diagram, the dotted arrows are induced, and $R$ is a rational map.    

Stacking two such diagrams on top of each other, we see that for $f_1, f_2 \in B$ we have 
\[ \sigma_{f_2 \circ f_1}=\sigma_{f_1} \circ \sigma_{f_2}.\]
Thus the semigroup ${\op B}^{\otimes *}$ acts naturally on the left on $\cT$:
\[ (f_1 \otimes \cdots \otimes f_n).\tau:=\sigma_{f_1}\circ \cdots \circ \sigma_{f_n}(\tau).\] 

\subsection{Correspondence on moduli space} \label{subsec:corr} We
collect facts from \cite{koch:endos} and \cite{kmp:kps}.  There is a
finite-index subgroup $\cG_f<\cG$--the ``liftables'' for $f$--defined
on representing maps so that 
\[ \cG_f:=\{h\in \cG : \exists \tilde{h}\in \cG, \;
h\circ f \simeq f\circ \tilde{h}\}.\] 
The diagram 
\[\xymatrix{ (S^2, P) \ar[r]^{\tilde{h}} \ar[d]^{f} & (S^2,P)
\ar[r]^{\tilde{\tau}=\sigma_f(\tau)} \ar[d]_{f}& (\Pone,
P_{\tilde{\tau}}) \ar[d]^{R}\\ (S^2, P) \ar[r]_{h}& (S^2,P)
\ar[r]_(.48){\tau} & (\Pone, P_\tau) } \] 
makes it clear that the map $\sigma_f$ covers a correspondence on
moduli space

\[\xymatrix{ &
\cT \ar[dd]_{\pi}\ar[rr]^{\sigma_f} \ar[dr]^{\zeta} &
&\cT \ar[dd]^{\pi} \\ &&\mathcal{W}\ar[dl]_{\phi}\ar[dr]^{\rho}
&\\ & \cM& & \cM.}\] 
In the diagram above, $\pi$ is the universal covering projection, $\zeta$ is the covering projection to $\cW:=\cT/\cG_f$, and $\phi$ is a finite cover.  The map $\rho$ is holomorphic; it may be constant, injective, a Galois surjective cover onto the whole of moduli space, or something in between; see \cite{bekp}.  The definitions imply that this correspondence $\phi, \rho: \cW \to \cM$ depends only on the Hurwitz biset $B$ of $f$, as defined in \S \ref{secn:hb}.
The covering $\phi$ is determined, up to equivalence, by the conjugacy class in $\cG$ of the finite-index subgroup $\cG_f<\cG$
so that $[\cG:\cG_f]=\deg(\phi)$. 

The map $f$ is conjugate up to isotopy to a rational function if and only if $\sigma_f$ has a fixed-point in $\cT$. 
The property of $f$ being non-exceptional implies that if $p=\#P$, the iterate $\sigma_f^{\circ p}$ is a strict  (though typically not uniform) contraction, and so a fixed-point, if it exists, must be unique; see \cite{MR3289714} and \S \ref{subsecn:contraction} below.

We denote by $\psi:=\phi \circ \rho^{-1}: \cM \dashrightarrow \cM$ the
corresponding multi-valued map.  The map $\psi$ depends only on $B$,
not on the choice of $f\in B$.  Since $\phi$ is a covering, $\psi$ has
the path-lifting property; this is the reason for our choice of direction for $\psi$.

\subsection{Two canonical models of Hurwitz bisets} 
\label{subsecn:two}

Fix a Hurwitz $\cG$-biset $B$; we denote by ${\op B}$ the corresponding ${\op \cG}$-biset.   Let $\phi, \rho: \cW \to \cM$ be the correspondence of \S \ref{subsec:corr}.  Let $\star \in \cM$ be the basepoint, and fix an identification $\phi^{-1}(\star) \leftrightarrow \sfX$.  Having now in hand the induced correspondence on moduli space, we now recall how $B$ admits canonical interpretations in terms of either maps or paths.

Recall that from the definition, on the one hand, 
\[ B=B^{\text{maps}}:=\{g_0 \circ f \circ g_1 : g_0, g_1 \in \cG\}/\text{isotopy relative to $P$}\]
with the left and right $\cG$-actions by pre- and post-composition with representing maps, respectively.

On the other hand, applying the construction in \cite[\S 4]{MR3781419}, we obtain a biset 
\[ B^{\text{paths}}:=\{(\tau, \sfx) : \tau:[0,1]\to \cM, \tau(0)=\star, \rho(\sfx)=\tau(1), \phi(x)=\star\}/\approx\] 
where $(\tau, \sfx) \approx (\tau', \sfx')$ if and only if $\sfx=\sfx'$ and $\tau,
\tau'$ are homotopic relative to their endpoints. The left action of ${\op \cG}$ by
pre-concatenation of paths $g\cdot (\tau,\sfx)=(g\cdot \tau,\sfx)$ is free.  The right
action is defined as $(\tau,\sfx)\cdot g = (\tau \cdot \rho(\widetilde{g}[x]),y)$, where $\widetilde{g}[\sfx]$ is the lift of $g$ under $\phi$ based at $\sfx$ joining $\sfx$ to some $\sfy \in \sfX$. 

The following fact--a generalization of the canonical identification
of ${\op \cG}$ with $\pi_1(\cM, *)^{\text{paths}}$--is essential to our development.  A more general version of the first statement, which deals with maps $f: (S^2, C) \to (S^2, A)$ for finite sets $C, A \subset S^2$, has appeared as \cite[Thm. 9.1]{MR4213769}. The second and third assertions follow from functoriality.

\begin{prop}
\label{prop:identify} 
The following admit canonical identifications:
\begin{enumerate}
\item the ${\op \cG}$-bisets ${\op B}^{\text{maps}}$ and $B^{\text{paths}}$;
\item for each $n \in \N$, the ${\op \cG}$-bisets $({\op B}^{\text{maps}})^{\otimes
 n}$ and $(B^{\text{paths}})^{\otimes n}$;
\item the semigroups $({\op B}^{\text{maps}})^{\otimes *}$ and
$(B^{\text{paths}})^{\otimes *}$. 
\end{enumerate}
\end{prop}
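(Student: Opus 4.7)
\emph{Plan.} The strategy is to construct an explicit ${\op \cG}$-biset isomorphism $\Psi : {\op B}^{\text{maps}} \to B^{\text{paths}}$ for (1), and then deduce (2) and (3) by functoriality: tensor products respect biset isomorphisms, and the semigroup structure on ${\op B}^{\otimes *}$ is simply the disjoint union of the tensor powers.

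First, fix the identification $\sfX \leftrightarrow \phi^{-1}(\star)$ provided in the setup of \S\ref{subsecn:two}. The commutative diagram from \S\ref{subsec:corr} gives $\pi \circ \sigma_{\sfx} = \rho \circ \zeta$ on $\cT$, and evaluating at $\star$ shows that $\rho(\sfx) = \pi(\sigma_{\sfx}(\star))$ for each basis element $\sfx \in \sfX$. For each $\sfx \in \sfX$, fix once and for all a reference path $\tau_{\sfx}$ from $\star$ to $\rho(\sfx)$ in $\cM$; for concreteness, one may take $\tau_{\sfx}$ to be the $\pi$-image of the Teichm\"uller geodesic in $\cT$ from $\star$ to $\sigma_{\sfx}(\star)$. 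Any two such choices differ by the left ${\op \cG}$-action, so the resulting isomorphism below is canonical at the level of bisets.

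Next, define $\Psi$ as follows. Since ${\op B}$ is left-covering with basis $\sfX$, every $f \in {\op B}^{\text{maps}}$ admits a unique decomposition $f = g \cdot \sfx$ with $\sfx \in \sfX$ and $g \in {\op \cG}$. Set
\[
\Psi(f) := (g \cdot \tau_{\sfx},\, \sfx),
\]
where $g$ is interpreted as a based loop in $\cM$ via the identification ${\op \cG} = \pi_1(\cM,\star)^{\text{paths}}$ of \S\ref{subsec:mcg}. Well-definedness on isotopy classes is immediate since isotopic maps yield homotopic loops. The left ${\op \cG}$-equivariance is automatic: $h \cdot f = (h g)\cdot\sfx$, so $\Psi(h \cdot f) = ((hg)\cdot\tau_{\sfx},\sfx) = h \cdot \Psi(f)$ by pre-concatenation. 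Bijectivity is also clear, since every element of $B^{\text{paths}}$ is uniquely the left ${\op \cG}$-translate of some $(\tau_{\sfx},\sfx)$.

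The crux is right ${\op \cG}$-equivariance. Writing $\sfx \cdot h = h' \cdot \sfx'$ for the unique $\sfx' \in \sfX$ and $h' \in {\op \cG}$, we have $f \cdot h = (gh')\cdot\sfx'$, so $\Psi(f \cdot h) = (gh' \cdot \tau_{\sfx'},\sfx')$. Directly from the right action on $B^{\text{paths}}$ recalled in \S\ref{subsecn:two},
\[
\Psi(f)\cdot h = \bigl(g \cdot \tau_{\sfx} \cdot \rho(\widetilde{h}[\sfx]),\, \sfy\bigr),
\]
where $\sfy \in \phi^{-1}(\star)$ is the endpoint of the $\phi$-lift $\widetilde{h}[\sfx]$ of $h$ starting at $\sfx$. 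The identity $\sfy = \sfx'$ together with the homotopy $\tau_{\sfx} \cdot \rho(\widetilde{h}[\sfx]) \simeq h' \cdot \tau_{\sfx'}$ rel endpoints is precisely the assertion that the monodromy action of $\pi_1(\cM,\star)$ on $\phi^{-1}(\star)$ via lifts through $\phi$ coincides, under $\sfX \leftrightarrow \phi^{-1}(\star)$, with the right action of ${\op \cG}$ on the basis $\sfX$ of ${\op B}$. This is the substantive step, and is a direct specialization of \cite[Thm.~9.1]{MR4213769}.

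For assertions (2) and (3), tensor over ${\op \cG}$: an isomorphism $\Psi$ of ${\op \cG}$-bisets induces isomorphisms $\Psi^{\otimes n}$ for every $n \geq 1$, using that the defining equivalence relation for $B^{\otimes n}$ (in either model) is given by the same ${\op \cG}$-action. Taking the disjoint union yields the semigroup isomorphism. The semigroup operation (concatenation of tensor factors on the map side, composition of path-lift data on the path side) is preserved because $\Psi^{\otimes (m+n)} = \Psi^{\otimes m} \otimes \Psi^{\otimes n}$ by construction.

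\emph{Main obstacle.} The only nontrivial point is the right-action compatibility in (1): identifying the $\phi$-monodromy on the fiber $\phi^{-1}(\star)$ with the biset right action on $\sfX$. Once this is in place—either by direct appeal to \cite[Thm.~9.1]{MR4213769} or by a hands-on verification using the factorization $\phi \circ \zeta = \pi$ and the fact that the deck transformations of $\pi$ are given by ${\op \cG}$—everything else follows by routine unwinding of definitions and by functoriality.
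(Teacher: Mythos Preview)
Your proposal is correct and follows essentially the same approach as the paper: the paper simply cites \cite[Thm.~9.1]{MR4213769} for part (1) and invokes functoriality for parts (2) and (3), without spelling out the explicit isomorphism. Your write-up is more detailed than the paper's own treatment---you construct $\Psi$ explicitly via the basis decomposition and correctly isolate right-equivariance (the monodromy identification) as the substantive content---but the underlying strategy is the same.
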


In general, the assignment of a biset of paths to a covering self-correspondence is sufficiently functorial so that the biset of an iterate is given by suitable concatenations of paths.  Let us now see how this works in our setting. 

Suppose $f_1, f_2 \in B^{\text{paths}}$ are represented by $(\tau_1, \sfx_1)$
and $(\tau_2, \sfx_2)$, respectively.   There is a unique lift
$\widetilde{\tau_2}[\sfx]$ of $\tau_2$ under $\phi$ based at $\sfx_1$. Under
$\rho$ this projects to a path starting at the endpoint
$\tau_1(1)=\rho(\sfx_1)$.  The concatenation $\tau_1 \cdot
\rho(\widetilde{\tau_2}[\sfx_1])$ defines an element of
$(B^{\text{paths}})^{\otimes 2}$. This is well-defined and respects the ${\op \cG}$-actions. 
 Generalizing our previous conventions, we denote this path simply by $f_1\cdot f_2$.  This extends more generally to arbitrary products of finitely many elements. 

We remark here that the concatenation of any pair of elements drawn from 
$(B^{\text{paths}})^{\otimes *} \cup {\op \cG}$ in fact admits the unifed description
\[ a \cdot b := a \cdot \tilde{b}[a]\]
where $\tilde{b}[a]$ denotes the unique lift of $b$ under $(\phi \circ
\rho^{-1})^{\circ |a|}$ starting at the terminus of $a$, where $|a|=0$ if $a
\in {\op \cG}$ and $|a| = n$ if $a \in B^{\otimes n}$, and the superscript $\circ |a|$ denotes iteration.

Furthermore, the action of $(B^{\text{paths}})^{\otimes *} \cup {\op \cG}$ on $\cT$ is naturally a left action: 
\[ (a\cdot b) . \tau = a.(b.\tau) = \sigma_a\circ \sigma_b(\tau)=\sigma_{a\cdot b}(\tau).\]
So in particular for arbitrary elements $a, b, c, \ldots, z \in (B^{\text{paths}})^{\otimes *} \cup {\op \cG}$ we have, evaluating at the basepoint $\star$ of $\cT$, that 
\[ \sigma_{a\cdot b \cdot c \cdot \cdots \cdot z}(\star) = \sigma_a \circ \sigma_b \circ \sigma_c \circ \cdots \circ \sigma_z(\star)\]
and that the projection of this image to $\cM$ coincides with the endpoint of the path $a\cdot b \cdot c \cdot \cdots \cdot z$.

\subsection{Contraction properties}\label{subsecn:contraction}See \cite{DH1} and \cite[\S 2.5]{MR3289714}. For any Hurwitz biset $B$, the maps $\rho$ and, equivalently, $\sigma_f$ for some (any) $f \in B$, do not increase Teichm\"uller distances, since they are holomorphic.  When $P$ is the postcritical set of $f$, taking the second iterate suffices to obtain a strict  non-uniform contraction.  For general $P$, we  may need to take $p:=\#P$ iterates  to obtain strict  contraction of $\sigma_f^{\circ p}$.  Formally, our setup is a little different: we are considering compositions, not iterates. Nevertheless, we still have:

\begin{prop}
\label{prop:strict}
Suppose $B$ is a non-exceptional Hurwitz biset, and $p:=\#P$. 
\begin{enumerate}
\item For any $F:=f_1\circ \cdots \circ f_p \in B^{\circ p}$, the map $\sigma_{F}$ is a strict  contraction with respect to the Teichm\"uller metric. 
\item Given any compact nonempty subset $K \subset \cM$, there exists
$\lambda:=\lambda(K)\in [0,1)$ such that for any $F \in B^{\circ p}$, we have 
\[ \sup\{ | d\sigma_F(\tau)| : \tau\in \cT,\pi(\tau) \in K\} < \lambda\]
where $| \cdot |$ is the operator norm with respect to the Teichm\"uller metric. 
\end{enumerate}
\end{prop}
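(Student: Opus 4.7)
The plan is to reduce both statements to the contraction theorem of Douady and Hubbard \cite{DH1}, as refined by Selinger \cite{selinger:pullback} when $P$ strictly contains the postcritical set of a representative $f \in B$. Two key ingredients: each $\sigma_f$ for $f \in B$ is holomorphic hence a weak Teichm\"uller contraction (Royden), and each $\sigma_g$ for $g \in \cG$ is a Teichm\"uller isometry (a covering transformation of $\pi\co\cT\to\cM$). Fixing a representative $f \in B$ and writing each factor as $f_i = g_i \circ f \circ h_i$ with $g_i, h_i \in \cG$, the pullback decomposes as
\[
\sigma_F \;=\; \sigma_{h_p} \circ \sigma_f \circ \sigma_{g_p} \circ \cdots \circ \sigma_{h_1} \circ \sigma_f \circ \sigma_{g_1},
\]
exhibiting $\sigma_F$ as $p$ copies of $\sigma_f$ interleaved with $\cG$-isometries.

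For (1), $F$ is itself a non-exceptional Thurston map on $(S^2,P)$, now presented as a $p$-fold composition. Selinger's refinement of Douady--Hubbard applied to this presentation---which uses precisely $p = \#P$ factors in order to defeat the Teichm\"uller-isometric subvarieties that can arise from periodic marked points outside the postcritical set of $f$---yields that $\sigma_F$ is a strict (non-uniform) contraction at every point of $\cT$. Heuristically, the locus where any single $\sigma_{f_i}$ fails to be strictly contracting corresponds, via the coderivative on holomorphic quadratic differentials, to an invariant multicurve structure; $p$ compositions exhaust the combinatorial possibilities compatible with non-exceptionality.

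For (2), my approach is pointwise comparison of $\sigma_F$ with the pure iterate $F_0 := f^{\circ p} \in B^{\circ p}$. The function $\tau \mapsto |d\sigma_{F_0}(\tau)|$ is $\cG_{F_0}$-invariant (from $\sigma_{F_0}\circ \sigma_a = \sigma_{F_0@a}\circ \sigma_{F_0}$ for $a \in \cG_{F_0}$, combined with the fact that $\sigma_a$ and $\sigma_{F_0@a}$ are isometries), so it descends to a continuous function on the finite cover $\cT/\cG_{F_0}\to\cM$; by part (1) applied to $F_0$, this function is strictly less than $1$ everywhere. On the compact preimage of $K$ the function attains a maximum $\lambda(K) \in [0,1)$. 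For a general $F$, using the covariance of the $\sigma$-construction under the ${\op \cG}$-biset action on $B$, I expect to obtain $|d\sigma_F(\tau)| = |d\sigma_{F_0}(\tau')|$ for some $\tau' \in \cT$ in the $\cG$-orbit of $\tau$, which still satisfies $\pi(\tau') = \pi(\tau) \in K$, yielding the same uniform bound $\lambda(K)$.

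The hard part will be the comparison step in (2). The isometries interleaved between $\sigma_f$-factors in $\sigma_F$ cannot in general be moved past $\sigma_f$ unless they lie in $\cG_f$, so an outer isometric conjugacy between $\sigma_F$ and $\sigma_{F_0}$ fails in general. The resolution likely involves viewing both $\sigma_F$ and $\sigma_{F_0}$ as distinct lifts of the same iterated moduli correspondence $\psi^{\circ p}\co \cM \dashrightarrow \cM$ starting at $\pi(\tau)$, so that their derivative norms at $\tau$ are captured by a common collection of continuous functions on finitely many branches of $\psi^{\circ p}$ over $\cM$, each bounded by some $\lambda_i(K) < 1$ on the compact preimage of $K$.
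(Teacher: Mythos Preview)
Your approach to (1) is heading the right direction---apply Douady--Hubbard/Selinger to $F$ viewed as a single Thurston map---but you have not identified the concrete input that makes this work. The paper's argument is: since the $f_i$ are all pure-Hurwitz-equivalent, they agree on $P$, and by varying within isotopy classes one may arrange $P\cup\mathrm{Crit}(F)=P\cup\mathrm{Crit}(f_1^{\circ p})$. The combinatorial criterion for strict contraction (Lemma 2.8 of \cite{MR3289714}) depends only on this set together with the self-map of $P$; hence it holds for $F$ exactly because it holds for $f_1^{\circ p}$. Your decomposition of $\sigma_F$ into $p$ copies of $\sigma_f$ interleaved with isometries is correct but does not by itself yield strict contraction, and your appeal to Selinger ``applied to this presentation'' is a placeholder where this reduction should go.

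For (2) you are working much too hard. The detour through $F_0=f^{\circ p}$ and the attempted pointwise comparison $|d\sigma_F(\tau)|=|d\sigma_{F_0}(\tau')|$ fails for exactly the reason you diagnose, and your proposed fix via ``finitely many branches of $\psi^{\circ p}$'' is correct in spirit but can be stated far more simply. Fix a basis $\sfX$ for $B$. Any $F\in B^{\circ p}$ has a unique expression $F=g\cdot w$ in ${\op B}^{\otimes p}$ with $g\in{\op\cG}$ and $w\in\sfX^p$; then $\sigma_F=\sigma_g\circ\sigma_w$ with $\sigma_g$ an isometry, so $|d\sigma_F(\tau)|=|d\sigma_w(\tau)|$. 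Now $\sfX^p$ is \emph{finite}, so apply part (1) to each $w\in\sfX^p$, note that $|d\sigma_w|$ descends to a continuous function on the finite cover $\cW_p\to\cM$ (your descent argument, transplanted from $F_0$ to each $w$), and take the maximum over the compact preimage of $K$. This is exactly your ``finitely many branches'' statement, with the branches named as basis words.
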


\begin{proof} To see (1), note that since we are dealing with pure
Hurwitz bisets, the $f_i$'s all agree on $P$. By varying within
isotopy classes we may therefore assume $P \cup \text{Crit}(F)=P \cup
\text{Crit}(f_1^{\circ p})$. By \cite[Lemma 2.8]{MR3289714}, which references only the set-theoretic dynamics on these common subsets, since $f_1$ is non-exceptional, if $E \subset P$ is any set with $\#E\geq 4$, then there exists $e \in E$ with $f_1^{-p}(e) \not\subseteq \text{Crit}(f_1^{\circ p})\cup P$.  From this the proof proceeds in the identical fashion as in the case of a single map. 

For (2), suppose $\sfX$ is a basis for $B$.  Then given any $F \in
B^{\circ p}$ there exist unique $g \in {\op \cG}$ and $w \in \sfX^p$
with $F=g\cdot w$.  Thus $\sigma_F=\sigma_g \circ \sigma_w$.  Since
$\sigma_g$ is an isometry, we conclude that $|d\sigma_F| =
|d\sigma_w|$.  The claim now follows from (1) and the observation that
$\sfX^p$ is finite.

\end{proof}

\subsection{Limit sets}\label{subsecn:limitsets} The correspondence $\phi, \rho: \cW \to \cM$
associated to a Hurwitz biset $B$ can be iterated, and so defines a
dynamical system on moduli space.   An orbit  corresponds to a
sequence of iterated inverse images of the multivalued map
$\psi=\phi\circ \rho^{-1}$.   More formally: a backward orbit is a sequence $(w_0, w_1, \ldots)$ of elements of $\cW$ such that for each $n=1, 2, \ldots$, we have $\mu_n:=\rho(w_{n-1})=\phi(w_{n})$.  Abusing notation, and setting $\mu_0:=\phi(w_0)$, we denote a backward orbit by the sequence $(\mu_0, \mu_1, \ldots)$ with the implicit understanding that this arises from such a sequence $(w_0, w_1, \ldots)$.  
Via functorial pullbacks, the $n$th iterate of the correspondence is given by a pair $\phi_n, \rho_n: \cW_n \to \cM$, where now $\phi_n$ is a cover of degree $(\deg\phi)^n$, and $\rho_n$ is holomorphic.  So equivalently, with analogous conventions of notation, a backward orbit segment of length $n$ is a choice of preimage of $\mu_0$ under the multivalued map $\phi_n \circ \rho_n^{-1}$.  This latter perspective is useful, since it implies the following fact, which follows directly from the fact that $\phi_n$ is a cover and $\rho_n$ is distance-non-increasing. 

\begin{lem} \label{lem:inj} Suppose $(\mu_0, \mu_1, \ldots)$ is a
backward orbit, and $r>0$.  Suppose $B(\mu_0, 2r)$ is
simply-connected. Set $\overline{B}_0:=\overline{B(\mu_0, r)}$.  For
each $n$, let $\overline{B}_n:=\psi^{-n}(\overline{B}_0)$ denote the
lift of $\overline{B}_0$ based at $\mu_n$. Then $\psi^{-n}:
\overline{B}_0 \to \overline{B_n}$ is single-valued, and
$\diam(\overline{B}_n) \leq \diam(\overline{B}_0)$ for each $n$.
\end{lem}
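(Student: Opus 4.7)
The plan is to work with the iterated correspondence $\phi_n, \rho_n \co \cW_n \to \cM$ introduced just before the lemma, and to exploit the two key facts that $\phi_n$ is a finite covering map and $\rho_n$ is holomorphic, hence distance-non-increasing in the Teichm\"uller length metric.

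First, I would identify the canonical base point in $\cW_n$ determined by the given backward orbit segment $(w_0, w_1, \ldots, w_{n-1})$. By functoriality of the pullback construction that yields $\phi_n, \rho_n \co \cW_n \to \cM$ (the analog at level $n$ of the construction of $\cW$ at level one), this sequence determines a unique point $w \in \cW_n$ with $\phi_n(w) = \mu_0$ and $\rho_n(w) = \mu_n$. In particular, the desired lift $\overline{B}_n$ will be produced as the $\rho_n$-image of a single sheet above $\overline{B}_0$ chosen to contain $w$.

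Second, because $\phi_n$ is a covering map and $B(\mu_0, 2r)$ is simply-connected (and locally path-connected), standard covering-space theory implies that $\phi_n^{-1}(B(\mu_0, 2r))$ decomposes as a disjoint union of open sheets, each mapped homeomorphically onto $B(\mu_0, 2r)$ by $\phi_n$. Equipping $\cW_n$ with the Teichm\"uller length metric pulled back from $\cM$, each sheet is in fact isometric to $B(\mu_0, 2r)$. Let $\widetilde{B}$ denote the sheet containing $w$, and let $\widetilde{D}$ be the preimage in $\widetilde{B}$ of $\overline{B}_0$; thus $\phi_n|_{\widetilde{D}}\co \widetilde{D} \to \overline{B}_0$ is an isometry. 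The single-valued branch of $\psi^{-n}$ on $\overline{B}_0$ based at $\mu_n$ is then realized as $\rho_n \circ (\phi_n|_{\widetilde{B}})^{-1}$, and its image is $\overline{B}_n$. Uniqueness of this branch is just the unique-lifting property of $\phi_n$ over a simply-connected base, pinned down by requiring the lift of $\mu_0$ to be $w$.

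Third, for the diameter bound, note that $\rho_n$ is holomorphic between complex orbifolds equipped with their Teichm\"uller (Kobayashi) metrics, hence distance-non-increasing. Combining this with the isometry $\phi_n|_{\widetilde{D}}$, one obtains
\[
\diam(\overline{B}_n) \;=\; \diam\bigl(\rho_n(\widetilde{D})\bigr) \;\leq\; \diam(\widetilde{D}) \;=\; \diam(\overline{B}_0),
\]
as desired. The only point of care is ensuring that $\phi_n$ is a local isometry in the chosen length metric on $\cW_n$, which is a standard fact, so I do not anticipate a genuine obstacle here; the whole argument is a packaging of covering-space theory plus the Schwarz--Pick inequality for $\rho_n$.
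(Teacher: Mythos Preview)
Your proposal is correct and is precisely the argument the paper has in mind: the paper does not give a detailed proof, merely stating that the lemma ``follows directly from the fact that $\phi_n$ is a cover and $\rho_n$ is distance-non-increasing,'' and your write-up is a faithful unpacking of exactly those two ingredients.
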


We are also concerned with the full backward orbit $\cO^{-}(\mu_0):=\{(\mu_0, \mu_1, \ldots) : \psi(\mu_n)=\mu_{n-1}, n=1, 2, \ldots \}$ of a point $\mu_0 \in \cM$.  This forms a ``preimage tree'' with root $\mu_0$ and a directed edge joining a point to its image under $\psi$. This ``preimage tree'' is distinct from the ``coding tree'' that we will define below. 

\begin{defn}
\label{defn:limitsetmod}
The \emph{limit set} of the correspondence on moduli space $\phi, \rho: \cW \to \cM$ is the set $\cJ$ of accumulation points  in $\cM$ of $\cO^{-}(\mu_0)$, where $\mu_0:=\star$ is the basepoint.  More formally: $\mu \in \cJ$ if and only if for every neighborhood $U$ of $\mu$, there is a backward orbit $(\mu_0, \mu_1, \ldots)$ and a subsequence $n_1, n_2, \ldots$ such that $\mu_{n_k} \in U$ for infinitely many values of $k$. 
\end{defn}

\begin{prop}
\label{prop:limitset}
Suppose $\cJ \neq \emptyset$.
\begin{enumerate}
\item $\cJ$ is closed and is independent of the choice of seed: for any $\mu_0, \nu_0 \in \cM$, the accumulation points of $\cO^{-}(\mu_0)$ and of $\cO^{-}(\nu_0)$ in $\cM$ coincide.
\item $\cJ$ is backward-invariant: $\psi^{-1}(\cJ)\subset \cJ$.  Equality need not hold.
\item For each $\mu_0 \in \cJ$, the set $\cO^{-}(\mu_0)$ is dense in $\cJ$.
\item The set $\cJ$ is contained in the closure of the set of periodic points (which are necessarily repelling).
\item If in addition $\#\cJ \geq 2$, then $\cJ$ contains no isolated points. 
\end{enumerate}
\end{prop}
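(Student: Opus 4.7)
The plan is to prove the five statements in an order dictated by their logical dependencies rather than the order listed. Closedness of $\cJ$ in (1) and backward invariance (2) are formal consequences of Lemma \ref{lem:inj}; part (4) is the analytic heart; independence in (1), density (3), and perfectness (5) then follow. Closedness is immediate from the definition. For (2), given $\mu \in \cJ$ and $\mu' \in \psi^{-1}(\mu)$, I would choose a small simply-connected ball $U \ni \mu$ on which the branch of $\psi^{-1}$ through $\mu'$ is single-valued by Lemma \ref{lem:inj}, take a backward orbit of $\star$ that enters $U$ infinitely often, and extend it one step via this branch to land in any prescribed neighborhood of $\mu'$; concatenating produces a backward orbit of $\star$ whose tail accumulates at $\mu'$. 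Equality can fail because some preimage of a point of $\cJ$ can lie outside the closure of backward orbits from $\star$.

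The hard part will be (4). Given $\mu \in \cJ$ and a sufficiently small simply-connected ball $U$ around $\mu$, take a backward orbit $(\mu_n)$ with $\mu_{n_k} \in U$ for infinitely many $k$; a pigeonhole argument on a finite subcover of $\overline{U}$ produces indices $n_k < n_\ell$ with $\mu_{n_k}$ and $\mu_{n_\ell}$ within any prescribed distance. The branch of $\psi^{-(n_\ell - n_k)}$ sending $\mu_{n_k}$ to $\mu_{n_\ell}$ is single-valued on $U$ by Lemma \ref{lem:inj}, mapping $U$ into a set of diameter at most $\diam(U)$ near $\mu_{n_\ell}$. To produce a fixed point I must upgrade this weak non-expansion to a strict contraction: the branch lifts to a self-map of Teichm\"uller space of the form $\sigma_F$ for some $F \in B^{\circ(n_\ell - n_k)}$, and once the composition length exceeds $p := \#P$, Proposition \ref{prop:strict} supplies uniform strict contraction on any compact subset of $\cM$ containing the relevant orbit. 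Banach's fixed point theorem then yields a periodic point of $\psi$ in a neighborhood of $U$, which is repelling since its Teichm\"uller lift is the fixed point of a strict contraction. Shrinking $U$ around $\mu$ gives periodic points accumulating at $\mu$, proving (4).

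With (4) in hand, independence in (1) follows because the set $\mathrm{Per}(\psi)$ of periodic points of $\psi$ is intrinsic to the correspondence, not dependent on any basepoint. Each $p \in \mathrm{Per}(\psi)$ is the fixed point of a contracting branch of $\psi^{-n}$ on a neighborhood $V \ni p$, so for any $\nu_0 \in V$ iterating this branch yields a backward orbit of $\nu_0$ converging to $p$; for arbitrary $\nu_0$ I would connect $\nu_0$ to a point of $V$ by a path, cover the path by finitely many simply-connected balls, and transport the construction by successive applications of Lemma \ref{lem:inj}. This gives $\overline{\mathrm{Per}(\psi)} \subset \cJ(\nu_0)$ for every $\nu_0$; combining with $\cJ(\mu_0) \subset \overline{\mathrm{Per}(\psi)}$ from (4) yields $\cJ(\mu_0) \subset \cJ(\nu_0)$, and symmetry gives equality. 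Density (3) is then immediate from the very definition of $\cJ(\mu_0)$ together with independence. For perfectness (5), with $\#\cJ \geq 2$ pick $\mu' \in \cJ \setminus \{\mu\}$; by (3) the backward iterates of $\mu'$ accumulate at $\mu$; they lie in $\cJ$ by iterated application of (2); and since only countably many can coincide with $\mu$, the point $\mu$ is nonisolated in $\cJ$.
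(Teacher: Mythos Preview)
Your proposal is correct and relies on the same two ingredients as the paper---Lemma \ref{lem:inj} (single-valuedness and non-expansion of branches of $\psi^{-n}$) and the uniform strict contraction on compacta from Proposition \ref{prop:strict}---but you reorganize the logic. The paper proves independence of seed in (1) \emph{directly} by the path-lifting argument (join $\mu_0$ to $\nu_0$ by a path $\gamma_0$ of length $L$; lifted paths $\gamma_n$ stay in $\overline{B(\mu,2L)}$ and their lengths tend to zero by uniform contraction there), then deduces (3), and only then proves (4): with (3) in hand one has a backward orbit \emph{of $\mu$ itself} recurring to $\mu$, so the relevant branch maps a ball centered at $\mu$ into itself with no pigeonhole step needed. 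You instead attack (4) first, using a backward orbit of $\star$ that recurs near $\mu$; this forces the extra pigeonhole argument to get two orbit points close enough that the strictly contracting branch becomes a self-map of a small ball. You then recover (1) by sandwiching every $\cJ(\nu_0)$ between $\cJ$ and $\overline{\mathrm{Per}(\psi)}$; your inclusion $\mathrm{Per}(\psi)\subset\cJ(\nu_0)$ is exactly the paper's path-lifting argument for (1), specialized to a periodic $\mu_0$, so you have not avoided that step but merely relocated it. The trade-off: the paper's ordering makes (4) cleaner, while your route yields the identity $\cJ=\overline{\mathrm{Per}(\psi)}$ as an explicit byproduct. One small gap to tighten in your (5): ``only countably many can coincide with $\mu$'' does not by itself rule out that every nearby point of $\cO^-(\mu')$ equals $\mu$; the paper handles this by invoking the path-lifting argument of (1) with $\mu_0=\mu$ and $\nu_0=\mu'\neq\mu$, so the lifted endpoints $\nu_n\in\cJ$ converge to $\mu$ while the paths $\gamma_n$ remain nontrivial.
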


\begin{proof}
(1) That the limit set is closed follows easily from an elementary diagonalization argument. For independence of seed, suppose $\mu_{n_k} \to \mu \in \cJ$. Join $\mu_0$ to $\nu_0$ by a path $\gamma_0$ of length $L$.  Let $K:=\overline{B(\mu, 2L)}$.  For each $n$, let $\gamma_n$ be the lift of $\gamma_0$ based at $\mu_n$, and set $\nu_n$ to be the endpoint of $\gamma_n$. Then $\gamma_{n_k} \subset K$ for all sufficiently large $k$. By uniform contraction on $K$, we conclude $|\gamma_{n_k}| \to 0$ as $k \to \infty$. Thus $\nu_{n_k} \to \mu$. 

(2) Backward invariance holds because $\phi$ is a cover and $\rho$ is
continuous.  To see that the limit set $\cJ$ need not be equal to its
preimage under $\psi$, consider the following example.  The
correspondence induced by $f(z)=z^2+i$, in suitable coordinates, is
given by $\phi(w)=(2-w)^2/w^2$, $\rho(w)=w$,
$\cM=\Pone-\{0,1,\infty\}$; see
\cite{bartholdi:nekrashevych:twisted}. The point $\mu=2$ is in the
image of $\psi$, but it is not in the limit space $\cJ=\cM - \{2\}$.

(3) This follows from (1) and (2).  

(4) Suppose $\mu_0 \in \cJ$; by (3) there is a backward orbit $(\mu_0, \mu_1, \ldots)$  such that along some subsequence we have $\mu_{n_k} \to \mu_0=:\mu_\infty$.  Choose any $r>0$ so small that $K:=\overline{B(\mu_\infty, 3r)}$ is simply-connected. Set $B_0:=\overline{B(\mu_0, r)}$. By Lemma \ref{lem:inj} and uniform contraction over $K$, we have $\diam(B_n) \to 0$ as $n \to \infty$.  For some sufficiently large $n$ we then have $B_n \subset B_0$. So  $\psi^{-n}|_{B_0}: B_0 \to B_n \subset B_0$ is well-defined and a uniform contraction, so it has a fixed-point. 

(5) This follows (2) and the argument in (1), applied to two distinct
elements $\mu_0, \nu_0 \in \cJ$. 
 
\end{proof}

As we have defined $\cJ$, it is not obvious that $\cJ$ is nonempty, since
we have defined it in terms of accumulation points in a non-compact
space.

\subsection{IFS on $\cT$ and coding tree}\label{subsecn:codingtree} Suppose $B$ is a Hurwitz $\cG$-biset, $\phi, \rho: \cW \to \cM$ its correspondence over moduli space, and fix an identification $\phi^{-1}(\star)\leftrightarrow \sfX$. For each $\sfx \in \sfX$, abusing notation, fix a rectifiable path $\sfx: [0,1] \to \cM$ with $\sfx(0)=\star$ and $\sfx(1) = \rho(\sfx)$.  We now have a basis $\sfX$ of the ${\op \cG}$-biset $B^{\text{paths}}$ which, by Proposition \ref{prop:identify}, is identified with the ${\op \cG}$-biset ${\op B}^{\text{maps}}$.  We also get an IFS on $\cT$ generated by $\{\sigma_{\sfx}: \sfx \in \sfX\}$.  

Below, we denote the elements of $\sfX$ by $\sfx, \sfy, \ldots, \sfz$, etc. 

Given a word $w_n=\sfx_1\cdots\sfx_n \in \sfX^n$ we denote the image of the basepoint $\star$ in Teichm\"uller space under the action of $w_n$ by 
\[ \Lambda(w_n)=\Lambda(\sfx_1 \sfx_2 \cdots
\sfx_n):=\sigma_{\sfx_1}\circ \sigma_{\sfx_2}\circ \cdots \circ
\sigma_{\sfx_n}(\star)=:\sfx_1\cdot \sfx_2\cdot \cdots \cdot \sfx_n.\!\star.\]
We denote by $\sfX^{+\infty}$ the space of right-infinite strings
$\sfx_1 \sfx_2  \cdots $, equipped with the
product topology.  
The \emph{coding map} $\Lambda\colon\sfX^{+\infty }\to \cT$ is defined as the function which
takes an infinite word to the set of accumulation points of the
sequence of images of finite initial subwords of the word:
\[ \Lambda(\sfx_1\sfx_2\cdots) :=\{ \tau \in \cT : \forall \; \text{open}\; U
\ni \tau, \Lambda(\sfx_1\cdots \sfx_n) \in U \; \text{for infinitely many }n\}.\]
We define the \emph{coding tree} $T(\sfX)$ to be the abstract rooted uniformly branching simplicial tree whose edges are labelled with elements of $\sfX$.
By iteratively lifting and concatenating representing paths under $\psi$, we obtain a
continuous extension of the coding map to the coding tree, $\Lambda: T(\sfX) \to \cT$.  An edge-path with label sequence
$\sfx_1, \ldots, \sfx_n$  maps continuously to the path
$\sfx_1 \cdots  \sfx_n$ in $\cT$, which terminates at $\sfx_1
\cdots \sfx_n.\star$. The corresponding image path in $\cM$ we denote by $\pi(\sfx_1 \cdots \sfx_n)$.

Some words of caution: given an infinite word $\sfx_1\sfx_2\cdots$, the sequence of points
$\sfx_1\cdot \sfx_2\cdot \cdots \cdot \sfx_n.\star$ is not an ``orbit'' in
the traditional sense: the $(n+1)$st term is \emph{not} the image of
the $n$th term under application of the action of one of the
$\sfx_i$'s.  Thus, we do not know how to prove that if the sequence corresponding to an aperiodic infinite word recurs infinitely often to a compact subset of moduli space, then its limit under the coding map exists. 

Clearly $\pi(\Lambda(\sfX^{+\infty})) \subset \cJ$. Conversely, any $\mu \in \cJ$
arises as an element of some $\pi(\Lambda(\sfx_1\sfx_2\ldots))$, since by diagonalization, in any finite branching infinite rooted tree, any infinite sequence of vertices contains a subsequence lying along an infinite ray. So in fact $\cJ = \pi(\Lambda(\sfX^{+\infty}))$.  

We call $\Lambda(\sfX^{+\infty})$ the \emph{limit set} of the IFS on $\cT$ generated by $\sfX$. 

\begin{prop}
\label{prop:coding}
Suppose $B$ is a non-exceptional Hurwitz biset.  The following are equivalent.
\begin{enumerate}
\item The limit set $\cJ$ of the correspondence on moduli space $\phi, \rho: \cW \to \cM$ is a nonempty compact subset of $\cM$.  Equivalently, the IFS on $\cT$ generated by $\sfX$ is contracting. 
\item The coding map $\Lambda: \sfX^{+\infty} \to \cT$ is single-valued and continuous. 
\item There is a nonempty smooth compact manifold with boundary  $K \subset \cM $ such that $\psi^{-1}(K) \subset K$ and the inclusion $K \hookrightarrow \cM$ induces a surjection on fundamental groups.
\end{enumerate}
\end{prop}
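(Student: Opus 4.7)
The plan is to prove $(1) \Leftrightarrow (2)$ and $(1) \Leftrightarrow (3)$ separately, with uniform contraction of the IFS on Teichm\"uller space (Proposition~\ref{prop:strict}) as the central tool; the equivalence of the two formulations inside $(1)$ -- compactness of $\cJ$ versus contraction of the IFS on $\cT$ -- will emerge from the $(1) \Leftrightarrow (2)$ loop. For $(2) \Rightarrow (1)$: the space $\sfX^{+\infty}$ is compact in the product topology, so continuity and single-valuedness of $\Lambda$ make $\Lambda(\sfX^{+\infty}) \subset \cT$ a compact attractor for the IFS, whose projection $\cJ = \pi(\Lambda(\sfX^{+\infty}))$ is then nonempty compact. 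For $(1) \Rightarrow (2)$: choose a compact neighborhood $N \supset \cJ$ with $\psi^{-1}(N) \subset N$, possible since $\psi^{-1}(\cJ) \subset \cJ$ and $\psi^{-1}$ is continuous. Proposition~\ref{prop:strict}(2) gives a uniform contraction factor $\lambda < 1$ for $\sigma_F$, $F \in B^{\circ p}$, over $\pi^{-1}(N)$. Writing $\tau_n := \sigma_{\sfx_1}\circ\cdots\circ\sigma_{\sfx_n}(\star)$, one has $\tau_{n+1} = \sigma_{\sfx_1}\circ\cdots\circ\sigma_{\sfx_n}(\sigma_{\sfx_{n+1}}(\star))$, so once the orbit has entered $\pi^{-1}(N)$,
\[
d_\cT(\tau_n, \tau_{n+1}) \;\leq\; M\,\lambda^{\lfloor n/p \rfloor}, \qquad M := \max_{\sfx \in \sfX} d_\cT(\star, \sigma_{\sfx}(\star)).
\]
Hence $(\tau_n)$ is Cauchy and converges in $\cT$, giving single-valuedness of $\Lambda$; continuity follows by applying the same estimate to two words sharing a long common prefix.

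For $(3) \Rightarrow (1)$, any backward orbit from a point in the nonempty $K$ stays in $K$ by hypothesis and accumulates by compactness, so $\cJ \neq \emptyset$; independence of seed (Proposition~\ref{prop:limitset}(1)) together with $\psi^{-n}(K)\subset K$ forces $\cJ \subset K$, hence $\cJ$ is compact. The $\pi_1$-surjectivity in $(3)$ is not required for this direction.

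For $(1) \Rightarrow (3)$, set $\Pi := \bigcup_{\sfx \in \sfX} \sfx([0,1]) \subset \cM$ and
\[
K_0 \;:=\; \overline{\cJ \;\cup\; \bigcup_{n \geq 0}\psi^{-n}(\Pi)}.
\]
Iterated uniform contraction on a compact neighborhood of $\cJ$ makes the $n$th lifts of $\Pi$ shrink geometrically and accumulate on $\cJ$, so $K_0$ is connected, compact, and $\psi^{-1}$-invariant. Thicken $K_0$ by a small $\epsilon$-tubular neighborhood with smoothed boundary, with $\epsilon$ chosen small relative to the strict-contraction margin, to obtain a smooth compact manifold with boundary $K$ satisfying $\psi^{-1}(K) \subset K$. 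For $\pi_1(K) \twoheadrightarrow \cG$: the basis paths $\sfx_i \in \Pi \subset K$, combined with auxiliary paths in $K_0$ connecting the preimages $\rho(\sfx_i)$ of $\star$, yield loops at $\star$ realizing every coset of $\cG_f$ in $\cG$ through the monodromy identification $\phi^{-1}(\star) \leftrightarrow \sfX$; an inductive descent on the pulled-back correspondence over $\cW$ then supplies generators of $\cG_f$ itself from the iterated lifts $\psi^{-n}(\Pi) \subset K$.

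The main obstacle is precisely the $\pi_1$-surjection in $(1) \Rightarrow (3)$. Compactness and backward-invariance of $K$ follow routinely from the contraction estimates, but converting the biset-theoretic structure (encoded in the coset decomposition $\sfX \leftrightarrow \cG/\cG_f$ and the iterated lifts $\psi^{-n}(\Pi)$) into explicit generators of $\cG = \pi_1(\cM)$ is a combinatorial-topological step requiring careful bookkeeping of the inductive descent into the finite-index subgroup $\cG_f$, and a verification that the descent terminates with $K$ still carrying enough loops to surject onto all of $\cG$.
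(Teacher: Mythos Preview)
Your implications $(2)\Rightarrow(1)$ and $(3)\Rightarrow(1)$ are fine and match the paper's. Your $(1)\Rightarrow(2)$ is essentially what the paper packages inside $(3)\Rightarrow(1)$ (geometric series from uniform contraction), though note a small gap: the bound $d_\cT(\tau_n,\tau_{n+1})\le M\lambda^{\lfloor n/p\rfloor}$ requires not that the orbit points $\tau_k$ lie over $N$, but that the entire geodesic from $\star$ to $\sigma_{\sfx_{n+1}}(\star)$ and all its forward images under $\sigma_{\sfx_k}\circ\cdots\circ\sigma_{\sfx_n}$ project into $N$. This is exactly why the paper first arranges that $K$ contain the basis paths $\sfx\in\sfX$ themselves; then backward invariance of $K$ keeps every lifted segment in $K$, and the contraction estimate goes through cleanly.

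The real divergence is in $(1)\Rightarrow(3)$. You build $K_0$ from iterated lifts of the basis paths and then try to extract generators of $\cG$ by an ``inductive descent into $\cG_f$''. You correctly flag this as the hard step, but you do not actually carry it out, and it is not clear the scheme terminates: knowing coset representatives of $\cG_f$ in $\cG$ plus generators of $\cG_f$ does give generators of $\cG$, but your proposed source for generators of $\cG_f$ is the \emph{same} construction one level down, which only produces coset representatives of $\cG_{f^{\circ 2}}$ in $\cG_f$, and so on---there is no base case. The paper sidesteps this entirely with a one-line observation: $\cM$ is a hyperplane complement with finitely generated fundamental group, so \emph{any} sufficiently large closed metric neighborhood $K$ of $\cJ$ already carries $\pi_1(\cM)$. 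Since $\psi^{-1}(\cJ)\subset\cJ$ and $\psi^{-1}$ is distance-nonincreasing, the closed $r$-neighborhood of $\cJ$ is automatically backward-invariant for every $r>0$. No bookkeeping of cosets or inductive descent is needed.
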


\begin{proof}That (2) implies (1) follows from the fact that $\pi$ maps the limit set in $\cT$ surjectively to the limit set in $\cM$. 

Suppose that (1) holds.  Since $\pi_1(\cM)={\op \cG}$ is finitely
generated, some sufficiently large closed metric neighborhood $K$ of
$\cJ$ will carry the fundamental group of $\cM$.
Statement (2) of Proposition~\ref{prop:limitset} implies that
$\psi^{-1}(\cJ)\subset \cJ$,
so $\psi^{-1}(K) \subset K$ because $\psi^{-1}$ is distance
nonincreasing. 
Hence (1) implies (3). 

To see that (3) implies (1), we note that by enlarging $K$ if
needed we may assume that it contains each of the paths $\sfx \in
\sfX$ corresponding to basis elements, while still ensuring
$\psi^{-1}(K) \subset K$.  Proposition \ref{prop:strict} implies that
$\psi^{-p}$ is a strict  contraction, where $p:=\#P$.  Hence it is
uniformly contracting on $K$.  Then standard arguments show that the
image in $\cM$ of each infinite path in the coding tree $T(\sfX)$ regarded as a path in $\cM$ is
uniformly bounded by the sum of a convergent geometric series. The same is true when lifted to $\cT$. 

\end{proof}

As a corollary, we see that contraction of the IFS on Teichm\"uller space defined by a chosen basis is a property that is independent of the choice of basis.

We are now ready to prove another of the implications in Theorem \ref{thm:main}. \\

\emph{Proof that (2) implies (1):}  This is standard, a special case of a much more general fact that the biset over the fundamental group of any expanding (meaning, lifts of paths are uniformly contracted) correspondence is contracting.  We adopt the setup of \S \ref{subsecn:codingtree}.  Fix a backward invariant compact subset $K \subset \cM, \cJ \subset K$ as in the setup of Proposition \ref{prop:coding}.  
We denote by $|\sfx|$ the length of the chosen representative
for $\sfx$ and by $\xi:=\max\{|\sfx|: \sfx \in \sfX\}$. The hypothesis
implies that any  $g\in {\op \cG}$ is represented by a rectifiable loop in $K$ at the basepoint $\star$.  We denote by $|g|$ the minimum length of such a representative.  Since ${\op \cG}$ acts cocompactly and properly discontinuously on $\pi^{-1}(K)$, the resulting norm on ${\op \cG}$ is proper: balls are finite.  

By hypothesis, $K$ is backward-invariant.  Applying Proposition \ref{prop:strict} and passing to the $p$th
iterate, we may reduce to the case $p=1$, and may assume that lifting
of paths in $K$ under $\psi$  contracts lengths by a definite factor
$\lambda < 1$.   The definitions of taking states and interpretation in terms of paths
then imply that for any $g \in {\op \cG}$ and any $\sfx \in \sfX$, the
element $\sfx\cdot g$ is represented by a path of length at most
$\lambda|g|+\xi$. Hence if $|g|>2\xi/(1-\lambda)$, then $\sfx \cdot g$ is represented by a path in $K$ of length at most $(1/2)(1+\lambda)|g|<|g|$.  In other words, the operators $g \mapsto \sfx@g, \sfx \in \sfX$ are uniformly contracting on sufficiently large elements of ${\op \cG}$.  By induction, and properness of the norm, it follows that ${\op B}=B^{\text{paths}}$ is contracting over ${\op \cG}$, hence $B$ is contracting over $\cG$.

\section{Contraction on $\cV$ implies contraction on $\cT$}
\label{secn:3to1}

We recall the statement:

\begin{nonumthm}
Suppose $B$ is a non-exceptional Hurwitz biset of Thurston maps.  Then the following are equivalent:
\begin{enumerate}
\item The IFS on $\cG$ given in \S (\ref{subsecn:G}) is contracting.
\item The IFS on $\cT$ given in \S (\ref{subsecn:T}) is contracting.
\item The IFS on $\cV$ given in \S (\ref{subsecn:V}) is contracting.
\end{enumerate}
\end{nonumthm}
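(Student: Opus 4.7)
The plan is to prove the only remaining implication, $(3) \Rightarrow (2)$ in Theorem~\ref{thm:main}: assuming $\widehat{\sigma}(L[B]) < 1$, I will show that the IFS on $\cT$ is contracting. The argument proceeds by contradiction, combining Selinger's continuous extension \cite{selinger:pullback} of each $\sigma_\sfx$ to the augmented Teichm\"uller space $\overline{\cT}$ with the action of Thurston linear maps on collar moduli.

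Suppose the IFS on $\cT$ is not contracting. By Proposition~\ref{prop:coding}, there is an infinite address $\sfx_1 \sfx_2 \cdots \in \sfX^{+\infty}$ whose coding orbit $\tau_n := \sigma_{\sfx_1}\circ \cdots \circ \sigma_{\sfx_n}(\star)$ escapes every compact subset of $\cT$. For $0 \leq m \leq n$, set $\tau_n^{(m)} := \sigma_{\sfx_{m+1}}\circ\cdots\circ\sigma_{\sfx_n}(\star)$, so $\tau_n^{(0)}=\tau_n$, $\tau_n^{(n)}=\star$, and $\tau_n^{(m)}=\sigma_{\sfx_{m+1}}(\tau_n^{(m+1)})$. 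Because each $\sigma_\sfx$ is $1$-Lipschitz in the Teichm\"uller metric, for every fixed $m$ the tail $\tau_n^{(m)}$ also escapes: indeed $d(\tau_n^{(m)},\star)\geq d(\tau_n,\star) - C_m$ where $C_m:= d(\sigma_{\sfx_1}\circ\cdots\circ\sigma_{\sfx_m}(\star),\star)$ is independent of $n$. Selinger's continuous extension together with a diagonal extraction yields limits $\tau_n^{(m)}\to\tau_\infty^{(m)}\in \cT_{\Gamma_m}$ for every $m \geq 0$, where $\Gamma_m$ is a nonempty multicurve; Selinger's description of the boundary action forces $\Gamma_m = f_{\sfx_{m+1}}^{-1}(\Gamma_{m+1})$, so $L_{m+1}:=L[\sfx_{m+1},\Gamma_{m+1}]:\R[\Gamma_{m+1}]\to \R[\Gamma_m]$ is a well-defined Thurston linear map.

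Fix $\epsilon>0$ smaller than the Margulis constant and let $M^*(n)$ be the largest $m$ for which $\tau_n^{(m)}$ has a simple closed geodesic of length $<\epsilon$. Then $M^*(n)<n$ (since $\star$ is $\epsilon$-thick) and, by the tail-escape argument, $M^*(n)\to\infty$ as $n\to\infty$. For $0\leq m\leq M^*(n)$, let $M_{n,m}\in \R[\Gamma_m]$ be the vector of moduli of standard collars of $\Gamma_m$ on $\tau_n^{(m)}$. Selinger's quantitative pullback comparison, rooted in the fact that a degree-$d'$ unramified cover of an annulus of modulus $M$ has modulus $M/d'$, yields the asymptotic recursion
\[ M_{n,m} \;=\; L_{m+1}\cdot M_{n,m+1} \;+\; E_{n,m}, \qquad \|E_{n,m}\|\leq K, \]
with $K$ depending only on $B$, $\sfX$, and $\epsilon$. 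At the threshold $m=M^*(n)$, the surface $\tau_n^{(M^*(n)+1)}$ is $\epsilon$-thick, so its $1$-Lipschitz image $\tau_n^{(M^*(n))}$ has any short curves whose collar moduli are bounded above by a constant depending only on $\sfX$ and $\epsilon$; hence $\|M_{n,M^*(n)}\|\leq K'$ uniformly in $n$. Iterating the recursion from $m=M^*(n)$ down to $m=0$, and observing that under the canonical identifications of \S\ref{subsecn:ifs} each composition $L_1\circ\cdots\circ L_m$ represents an element of $L[B^{\otimes m}]\subset \End(\cV)$, the definition of the joint spectral radius together with Lemma~\ref{lem:tozero} and $\widehat{\sigma}(L[B])<1$ yields $\|L_1\circ\cdots\circ L_m\|\leq C'\lambda^m$ for some $\lambda<1$. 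Therefore
\[ \|M_{n,0}\| \;\leq\; C' K' \lambda^{M^*(n)} \;+\; C' K\sum_{k=0}^{\infty}\lambda^k, \]
which is uniformly bounded in $n$. But by Mumford's compactness criterion, the escape of $\tau_n$ forces $\|M_{n,0}\|\to\infty$: a contradiction.

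The principal obstacle is verifying the uniform error bound $\|E_{n,m}\|\leq K$ in the recursion. Selinger \cite{selinger:pullback} furnishes continuous extension and qualitative asymptotic control, but extracting a uniform quantitative estimate requires revisiting his local comparison of thin parts via Minsky's product-region theorem, while checking that errors do not accumulate across the finitely many scrambler transitions. A second subtlety is reconciling, for each specific $n$, the pinched profile of $\tau_n^{(m)}$ with the limit multicurve $\Gamma_m$ arising from the diagonal extraction, so that a common sequence of maps $L_{m+1}$ applies throughout the iteration; this should be achievable by a nested choice of subsequences along which the pinched profiles stabilize step by step.
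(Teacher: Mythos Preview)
Your strategy and the paper's share the same backbone---Selinger's estimate that, under pullback, the vector of (reciprocals of) short-curve lengths is controlled by the Thurston linear map up to a uniform additive error, combined with Mumford compactness. But you have introduced an unnecessary and problematic layer: the limit multicurves $\Gamma_m$ obtained by diagonal extraction in $\overline{\cT}$. The recursion $M_{n,m}=L_{m+1}\cdot M_{n,m+1}+E_{n,m}$ uses the linear maps $L_{m+1}=L[\sfx_{m+1},\Gamma_{m+1}]$, which are defined via the \emph{limit} multicurves, while the moduli $M_{n,m}$ live on the \emph{finite}-$n$ surfaces $\tau_n^{(m)}$. Selinger's estimate requires $\tau_n^{(m+1)}\in U(\Gamma_{m+1})$, i.e.\ that every short curve on $\tau_n^{(m+1)}$ lies in $\Gamma_{m+1}$. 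For each fixed $m$ this holds once $n$ is large enough, but you need it simultaneously for all $m\le M^*(n)$, and $M^*(n)\to\infty$; no single diagonal subsequence guarantees this. You flag this yourself as ``a second subtlety,'' and it is a genuine gap, not a routine bookkeeping matter.

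The paper bypasses this entirely by not tracking any specific escaping orbit and not passing to $\overline{\cT}$. Instead it proves a one-step lemma directly in $\cM$: after passing to an iterate so that every $\|w\|<1/2$ for $w\in L[B]$, there exists $\epsilon>0$ such that $\ell(\mu)<\epsilon$ implies $\ell(\widetilde{\mu})>\ell(\mu)$ for every $\widetilde{\mu}\in\psi^{-1}(\mu)$. Here one takes $\Gamma$ to be the \emph{actual} set of short curves on the surface over $\mu$ (so $\tau\in U(\Gamma)$ is automatic), applies Selinger's inequality once, and reads off the conclusion from the $\ell^\infty$-norm. This single step immediately gives backward-invariance under $\psi$ of the compact set $\{\ell\ge\epsilon'\}$ for suitable $\epsilon'$, and Proposition~\ref{prop:coding} finishes. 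Your iterated-recursion argument can be repaired by taking, for each $(n,m)$, $\Gamma$ to be the actual short curves on $\tau_n^{(m+1)}$ rather than the limit $\Gamma_{m+1}$---but once you do that, the argument collapses to the paper's one-step lemma, and the limit extraction becomes irrelevant.
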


\begin{proof}

The implication (2) implies (1) was shown at the end of the previous subsection, \S \ref{subsecn:codingtree}. The implication (1) implies (3) was shown in Proposition~\ref{prop:coeffs}.
\\

\emph{Proof that (3) implies (2).} Given $\mu \in \cM$ we denote by $\ell(\mu)$ the length of the
shortest closed hyperbolic geodesic on the corresponding punctured sphere, with its hyperbolic metric; similarly we define $\ell(\tau)=\ell(\pi(\tau))$. 

Suppose that the IFS on $\cV$ is contracting, that is,
$\whatsigma(L[B]) < 1$. Proposition~\ref{prop:coding} implies that to
prove that the IFS on $\cT$ is contracting, it suffices to prove that
there is a nonempty smooth compact manifold with boundary $K\subset
\cM$ such that $K \hookrightarrow \cM$ induces a surjection on
fundamental groups.  We will show the existence of such a compact,
backward-invariant set $K$.

We equip $\cV$ with now the $\ell^\infty$ norm, denoted $|\cdot |$,
and use this to define the operator norm $||w||$ of an element $w \in
L[B^*]$.  Since $\whatsigma(L[B])<1$, there is an iterate $q$ such
that $||w||<1/2$ for all $w\in L[B^n]$ with $n\geq q$. 

In this paragraph we show how to reduce to the case in which $q=1$.
Suppose that we have a compact set $K'$ relative to $\psi^q$ instead
of $\psi$.  Put $K'_0:=K', K'_{i+1}:=\psi^{-1}(K'_i)$ for $i=0,
\ldots, q-1$, and $K:=\cup_{i=0}^q K_i'$.  Then $\psi^{-1}(K) \subset
K$.  Hence by replacing $f$ by $f^{\circ q}$, we may assume $q=1$ to
ease notation.

\begin{lem}
\label{lemma:improves}
There exists $\epsilon>0$ such that if $\ell(\mu)<\epsilon$ and $\widetilde{\mu}\in \psi^{-1}(\mu)$, then $\ell(\wtmu)>\ell(\mu)$.
\end{lem}

Assuming Lemma \ref{lemma:improves}, we now show the
implication. Mumford's compactness theorem implies that the locus $\{\ell(\mu)\geq \epsilon\} \subset \cM$ is compact.  Since $\phi$ is a finite cover, it is proper, so the locus $\psi^{-1}(\{\ell(\mu)\geq \epsilon\})$ is compact. Again by Mumford's theorem, there exists $\epsilon'>0$ such that $\psi^{-1}(\{\ell(\mu)\geq \epsilon\}) \subset \{\ell(\mu)\geq \epsilon'\}=:K$, which is compact.  If $\epsilon' \geq \epsilon$ we are done. Otherwise, the compact set $K$ is the union of the loci $\{\ell(\mu) \geq \epsilon\}$ and $\{\epsilon' \leq \ell(\mu) \leq \epsilon\}$. Under $\psi^{-1}$, the image of the former lies in $K$ by definition.  The lemma implies that the image of the latter is contained in the locus $\{\ell(\mu)\geq \epsilon'\}= K$.  So $\psi^{-1}(K) \subset K$.

Turning to the proof of the lemma, we first establish some
notation. Suppose $\mu, \wtmu$ are as in the statement.  Fix $\tau \in
\cT$ such that $\pi(\tau)=\mu$.  Given $\gamma \in \curves$, we denote
by $\ell(\tau, \gamma)$ the length of the unique geodesic homotopic to
$\gamma$ relative to the hyperbolic metric determined by $\tau$.  

Let $\Gamma$ be a nonempty multicurve.  The ${\op \cG}$-orbit of
multicurves containing $\Gamma$ is represented by $\Gamma_{\sft}$ for some
$\Gamma_{\sft}\in \sfT$.  Every $\gamma\in \Gamma$ determines a unique basis
vector $v_\gamma$ in the summand $\R[\Gamma_{\sft}]$ of $\cV$.  We set
\[Z(\Gamma, \tau):= \sum_{\gamma\in \Gamma} (1/\ell(\tau,
\gamma))v_\gamma \in \cV.\]

We fix a Margulis-like constant $0<L<2\log(\sqrt{2}+1)$; for
concreteness, $L:=\log(\sqrt{2}+1)$ will do.  Curves of length less
than $L$ are ``short''.  We set $\Gamma:=\{\gamma\in \curves :
\ell(\tau,\gamma)<L\}$, the set of all short curves.  This is a
multicurve by \cite[Corollary 3.8.7]{MR3675959}, possibly empty.  We
denote by
\[ U(\Gamma):=\{ \tau'\in \cT : \ell(\tau',\beta) \geq L\;\;  
\forall \beta \in \curves , \; \beta \not\in \Gamma\}.\]
In other words, $U(\Gamma)$ is the locus in $\cT$ for which each short
curve belongs to $\Gamma$.  Note that if $\ell(\tau)<L$, then $\Gamma$
is nonempty and $\tau \in U(\Gamma)$.

We now prove the lemma.  There exists $\sfx \in \sfX$ so that $\wttau:=\sigma_\sfx(\tau)$ satisfies $\pi(\wttau)=\wtmu$.  We denote $\sfx$ by $f$ and write $\sigma_f$ instead.   

We next show that if $\gamma\in \curves$ and $\ell
(\widetilde{\tau},\gamma)<L$, then $\gamma\in f^{-1}(\Gamma)$.  For
this we use $f$ to lift the hyperbolic metric on $S^2-P$ to a
hyperbolic metric on $S^2-f^{-1}(P)$.  Let $\delta$ be the unique
geodesic relative to this metric on $S^2-f^{-1}(P)$ which is isotopic
to $\gamma$.  Then $f(\delta)$ is a closed geodesic in $S^2-P$
isotopic to $f(\gamma)$, possibly with self-intersections.  So $\ell
(\tau,f(\delta))\le\ell (\tau,f(\gamma))<\ell
(\widetilde{\tau},\gamma)<L$.  Hence \cite[Proposition
10.11.5]{MR3675959} implies that $f(\delta)$ is simple.  Thus
$f(\delta)\in \Gamma$, and so $\gamma\in f^{-1}(\Gamma)$.

Now we apply the estimates in the proof of
\cite[Prop. 7.4]{selinger:pullback}.  Though stated for invariant
multicurves with leading eigenvalues less than 1, these assumptions are
not used to prove these estimates.  The key observation is the existence of a positive constant $C=C(\deg(f), L, \#P)$, depending only on the degree of $f$, on $L$, and on the cardinality of $P$, such that 
\[ \tau \in U(\Gamma) \implies |Z(f^{-1}(\Gamma), \wttau)| \leq||L[f,\Gamma]||\cdot | Z(\Gamma, \tau)| +C.\]
We have passed to a sufficient iterate so that $||L[f,\Gamma]|| <1/2$.  Thus 
\[ |Z(\Gamma,\tau)|>2C \implies  |Z(f^{-1}(\Gamma), \wttau)| < (1/2) |Z(\Gamma,\tau)|+(1/2) |Z(\Gamma,\tau)|=|Z(\Gamma,\tau)|.\]
In other words, since we are dealing with the $\ell^\infty$ norm, 
\[ \tau \in U(\Gamma) \; \text{and }\; \ell(\tau) < \frac{1}{2C} \implies \ell(\wttau)>\ell(\tau).\]
Taking $\epsilon:=\min\{1/(2C), L\}$, we conclude that if
$\ell(\tau)<\epsilon$, then the set of short curves on the surface
corresponding to $\tau$ is nonempty, and the estimates show that
the length of the systole on $\wttau$ cannot decrease.  The proof is complete.

\end{proof}

By Theorem \ref{thm:main}, a Hurwitz biset $B$ for which $B^*$
consists entirely of rational maps, but which is not contracting, must
have the property that the limit set $\cJ$ is not compact.
Proposition \ref{prop:limitset} implies that then the set of periodic
points in $\cJ$ is unbounded. Proposition \ref{prop:coding} and the
interpretation of the proof of Thurston's characterization in terms of
iterated concatenation of paths implies that then every periodic
coding ray terminates at such a periodic point.

\section{Examples}
\label{sec:examples}

Here, we present several examples of computations of the strata
scrambler and related results for polynomials and NET maps.

\subsection{Polynomials}
\label{subsecn:toppolys}

The main result of this section is

\begin{thm}
\label{thm:Lforpoly}
Suppose $f: (\rs, P) \to (\rs, P)$ is a topological polynomial,
$\Gamma$ is a multicurve in $\rs - P$, and $f^{-1}(\Gamma)=\Gamma$. Then $\sigma(L[f,\Gamma]) \leq 1$, with equality if and only if $\Gamma$ contains a degenerate Levy cycle.  Furthermore, if $f$ is equivalent to a complex polynomial, then $\sigma(L[f,\Gamma]) < 2^{-1/\#P}<1$.
\end{thm}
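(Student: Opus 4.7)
The plan is to apply the folklore structural theorem on completely invariant multicurves of topological polynomials, which will have been proved earlier in this section, to reduce $L[f,\Gamma]$ to an essentially combinatorial object. The geometric setup is the following: because $\infty$ is a fixed critical point of $f$ of maximal degree $d$, every essential simple closed curve $\gamma$ in $\rs - P$ bounds an inner disk $D_\gamma \subset \rs \setminus \{\infty\}$; set $m(\gamma) := |P \cap D_\gamma|$. For an essential component $\tilde\delta$ of $f^{-1}(\gamma)$, the restriction $f \colon D_{\tilde\delta} \to D_\gamma$ is a branched cover of degree $d_{\tilde\delta} := \deg(f|_{\tilde\delta})$, and the invariance $f(P) \subseteq P$ together with a multiplicity comparison yields the fundamental inequality $m(\tilde\delta) \leq d_{\tilde\delta}\cdot m(\gamma)$. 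Distinct essential preimage components produce pairwise disjoint inner disks, because two disjoint essential curves both bound disks avoiding the common point $\infty$.

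Next, I would invoke the folklore structural theorem under the complete invariance hypothesis $\Gamma = f^{-1}(\Gamma)$. The expected output is that the assignment sending each $\gamma \in \Gamma$ to the homotopy class of a distinguished essential preimage component realizes a bijection of $\Gamma$ onto itself, so that $L[f,\Gamma]$ becomes, after relabeling, a weighted permutation matrix with nonzero entries $1/d_{\tilde\gamma}$. The spectrum of such a matrix decomposes along the cycles of the underlying permutation: for each cycle $C$ the corresponding eigenvalues have absolute value $\bigl(\prod_{\gamma \in C} d_{\tilde\gamma}\bigr)^{-1/|C|}$. Since each $d_{\tilde\gamma} \geq 1$, we immediately obtain $\sigma(L[f,\Gamma]) \leq 1$. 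Equality along a cycle forces $d_{\tilde\gamma} = 1$ for every $\gamma$ in that cycle, which is exactly the definition of a degenerate Levy cycle contained in $\Gamma$; conversely any such degenerate Levy cycle within $\Gamma$ produces a permutation cycle with all weights equal to $1$ and realizes $\sigma = 1$.

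For the quantitative refinement when $f$ is equivalent to a complex polynomial, the Berstein-Levy criterion \cite[Theorem 10.3.9]{MR3675959} rules out Levy cycles entirely, so every cycle $C$ of the permutation satisfies $\prod_{\gamma \in C} d_{\tilde\gamma} \geq 2$, producing a spectral contribution of at most $2^{-1/|C|}$. The classical pants-decomposition bound $\#\Gamma \leq \#P - 3$ forces $|C| \leq \#P - 3 < \#P$, and since $k \mapsto 2^{-1/k}$ is strictly increasing on positive integers we conclude
\[
\sigma(L[f,\Gamma]) \leq 2^{-1/(\#P - 3)} < 2^{-1/\#P}.
\]

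The main anticipated obstacle is extracting from the folklore structural theorem the precise form that reduces $L[f,\Gamma]$ to a weighted permutation matrix. The topological polynomial hypothesis should force each $\gamma \in \Gamma$ to have only one (or a canonical) essential preimage component in $\Gamma$, via a Riemann-Hurwitz count on the components of $f^{-1}(D_\gamma)$ combined with the nested disk structure around $\infty$ and disjointness of inner disks. If the structural theorem instead permits multiple essential preimages of a single curve, one would need to replace the weighted-permutation analysis with a direct Perron-Frobenius bound for a general nonnegative matrix, taking the inequality $m(\tilde\delta) \leq d_{\tilde\delta}\, m(\gamma)$ and the pairwise disjointness of inner disks as the essential inputs and choosing the weight vector $m$ as a Perron test vector.
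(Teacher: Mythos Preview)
Your central structural claim---that $L[f,\Gamma]$ is a weighted permutation matrix---is false in general, and this is the genuine gap. Observation $(\ast\ast)$ in the paper (distinct essential preimages of a single curve have disjoint inner disks, hence are pairwise non-homotopic) guarantees only that each \emph{entry} of $L[f,\Gamma]$ is $0$ or a single reciprocal $1/d$; it does not force each row or column to have a unique nonzero entry. A curve $\gamma\in\Gamma$ can have several essential, pairwise non-homotopic preimage components, and since $f^{-1}(\Gamma)=\Gamma$ all of them lie in $\Gamma$, so the column of $\gamma$ has several nonzero entries. Dually, when $\gamma_1,\gamma_2\in\Gamma$ are nested, a preimage component of $\gamma_1$ and one of $\gamma_2$ can be homotopic in $\rs-P$, so a single $\tilde\gamma$ may have more than one parent. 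Thus the ``folklore structural theorem'' you are hoping for does not deliver a bijection on all of $\Gamma$.

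The paper's proof avoids this by an inductive peeling: one first isolates the subset $I=I(\Gamma)$ of \emph{innermost} curves, shows $\R[I]$ is $L$-invariant so that $M(L_\Gamma)$ is block upper-triangular, and proves that \emph{within $I$} each curve has at most one parent (because innermost curves bound pairwise disjoint disks). This yields a pushforward function $f_*\colon I\cup\{\emptyset\}\to I\cup\{\emptyset\}$ whose cycle decomposition gives exactly the weighted permutation blocks you describe, with nilpotent pieces elsewhere. One then replaces $\Gamma$ by $\Gamma\setminus I$ and repeats. Your cycle-length bound $|C|\le\#\Gamma\le\#P-3<\#P$ and the monotonicity of $k\mapsto 2^{-1/k}$ are correct and are precisely what finishes the argument once the block structure is in hand.

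Your fallback Perron test with weight vector $m(\gamma)=|P\cap D_\gamma|$ does not rescue the argument: the inequality $m(\tilde\delta)\le d_{\tilde\delta}\,m(\gamma)$ goes the wrong way to bound $(Lm)_{\tilde\gamma}=\sum_\gamma m(\gamma)/d_{\tilde\gamma,\gamma}$ by $m(\tilde\gamma)$, and replacing $m$ by $1/m$ fails once $\tilde\gamma$ has more than one parent.
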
 

A \emph{degenerate Levy cycle} is a Levy cycle of nontrivial curves,
each mapping by degree one, and bounding a pairwise disjoint
collection of closed discs also mapping by degree one.  Thus, in
particular, the only obstructions are Levy cycles, and in the absence
of such obstructions, the spectral radii of the transformation
$L[f,\Gamma]$ are bounded above away from $1$ by a quantity which
depends only on the size of $P$ and not on the degree of $f$. From
this and Proposition \ref{prop:polygamma} below, we easily derive
Theorem \ref{thm:toppoly} as follows.

\begin{proof}[Proof of Theorem \ref{thm:toppoly}] We will show that if
$a_1, \ldots, a_n \in L[B]$ are the weights along an arbitrary
$n$-cycle in $\cS[B]$, then $\sigma(a_n \circ \cdots \circ a_1) <
2^{-1/\#P}$.  So assume such $a_i$'s are given; we may assume $a_i =
L[f_i,\Gamma_i]$ where $f_i^{-1}(\Gamma_i)=\Gamma_{i+1}$ and
$f_n^{-1}(\Gamma_n)=\Gamma_1$.  Let $F:=f_1 \circ \cdots \circ f_n$.
Then $\Gamma:=\Gamma_1$ is a completely invariant multicurve for $F$.
Statement (6) of Proposition \ref{prop:polygamma} shows that under
either condition of Theorem \ref{thm:toppoly}, there cannot exist a
degenerate Levy cycle, and so $f$ is equivalent to a complex
polynomial.  Theorem \ref{thm:Lforpoly} then implies that
$\sigma(L[F,\Gamma]) = \sigma(a_n \circ \cdots \circ a_1) < 2^{-1/\#P}
< 1$ independent of $n$.  Thus $\widehat{\sigma}(B) < 1$ and so $B$ is
contracting by Theorem \ref{thm:main}.
\end{proof}

The proof of Theorem \ref{thm:Lforpoly} proceeds by showing that for a topological polynomial $f$ and a completely invariant multicurve $\Gamma$, the structure of the linear map $L[f,\Gamma]$ is constrained; see Proposition \ref{prop:polygamma}.  While this seems to be well-known, we were unable to find this version in the literature.

We begin with some notation and preparatory remarks.  Let $f\co
(\rs,P)\to (\rs,P)$ be a topological polynomial.

If $\gamma \subset \rs-P$ is a simple closed curve, we denote by
$D(\gamma)$ the bounded component of its complement, i.e. its
``inside''. If $\gamma, \wtgamma \in \curves$ and $\gamma \leftarrow
\wtgamma$ then up to isotopy $f: D(\wtgamma) \to D(\gamma)$ is a
proper--in particular, surjective--branched cover of degree given by
$\deg(f: \wtgamma \to \gamma)$. 

Next, we state some facts about multicurves in the plane. Suppose
$\Gamma$ is a multicurve in $\rs- P$.  If $\alpha, \beta \in \Gamma$,
then either their insides are disjoint, or one's inside is contained
in the other's.  Thus any $\Gamma$ has a nonempty distinguished subset
$I:=I(\Gamma)$ consisting of its ``innermost'' elements, defined by
the property \[ \alpha \in I \iff \forall \beta \in \Gamma -
\{\alpha\}, \; D(\alpha) \not\supset \beta .\] Moreover, the insides
of innermost curves are pairwise disjoint, i.e. $\gamma_1, \gamma_2
\in I \implies D(\gamma_1) \cap D(\gamma_2) = \emptyset$.

Next, suppose that $\gamma \leftarrow \wtgamma$. In this case, we say that $\gamma$ is a \emph{parent} of $\wtgamma$. 

Finally, suppose $\delta_1, \delta_2 \subset f^{-1}(\gamma)$ are distinct and
nontrivial preimages of a curve $\gamma$. Then their insides must be
pairwise disjoint, and so they cannot be homotopic in $\rs - P$.  That
is,
  \begin{equation*}\tag{**}
\parbox{.75\linewidth}{
  any entry of the matrix $L[f,\Gamma]$ has at most one term in
its sum, and hence is either 0 or the reciprocal of a positive integer.}
  \end{equation*}

We introduce some notation. We put $L:=L[f,\Gamma]: \R[\Gamma] \to \R[f^{-1}(\Gamma)]$.  For a subset $S \subset f^{-1}(\Gamma)$ we put $L_S=p_S \circ L$ where $p_S: \R[f^{-1}(\Gamma)] \to \R[S]$ is the natural projection.  Matrices--after a possible permutation of the basis elements $\Gamma$--are denoted by $M(\cdot)$.

\begin{prop}
\label{prop:polygamma}
Suppose $f$ is a topological polynomial, and $\Gamma$ is an arbitrary multicurve such that $f^{-1}(\Gamma) \supset \Gamma$.  Let $I:=I(\Gamma)$ be the innermost elements of $\Gamma$. 
\begin{enumerate}
\item $(f^{-1}(I)) \cap \Gamma \subset I$.  That is, a preimage of a $\Gamma$-innermost curve that belongs to $\Gamma$ is again $\Gamma$-innermost, and so  $L_\Gamma$ leaves invariant the subspace $\R[I]$, i.e. we may assume that the matrix for $L_\Gamma$ has the form 
\[ 
M(L_\Gamma) = \begin{pmatrix}
M(L_I) & * \\
0 & *
\end{pmatrix}.
\]
\item For each $\wtgamma \in I$, there exists at most one ``parent'' $\gamma \in I$ with $\gamma \leftarrow \wtgamma$.  Thus pushforward under $f$ induces a function $f_*: I \cup \{\emptyset\} \to I \cup \{\emptyset\}$ upon setting $f(\emptyset)=\emptyset$ and $f(\wtgamma)=\emptyset$ if $\wtgamma \in I$ has no parent in $I$. 

The grand orbits of $f_*$ partition $I$ into the pairwise disjoint union of an escaping set $E \subset I$, consisting of those curves which land on $\emptyset$ under iteration of $f_*$, and finitely many sets $K \subset I$ consisting of elements iterating to a single common cycle, $C(K)$. Thus $L_I$ leaves invariant the subspace $\R[E]$ on which the action of $L_I$ is nilpotent, and $L_I$ leaves invariant each of the subspaces $\R[K]$. Thus
\[ M(L_I)=
\begin{pmatrix}
M(L_{K_1}) & 0 & \cdots  & 0\\
0  & M(L_{K_2}) & \cdots  & 0\\
\cdots  &  \cdots & \cdots & \cdots  \\
0 & 0 & \cdots &  M(L_E)
\end{pmatrix}.
\]

\item Each set $K$ decomposes into its unique cycle $C(K)$ and its complement $K-C(K)$. So 
\[ M(L_K)=
\begin{pmatrix}
M(L_{K-C(K)}) & * \\
0 & M(L_{C(K)})\\
\end{pmatrix}
\]
where $M(L_{K-C(K)})$ is nilpotent.
\item Writing $C=c_1 \leftarrow c_2 \leftarrow \cdots c_{p} \leftarrow c_1$, we have 
\[
M(L_C)=
\begin{pmatrix}
0 & 0 & 0 & \cdots & 0 & 1/d(c_{p})\\
1/d(c_1) & 0 & 0 & \cdots & 0 & 0 \\
0 & 1/d(c_2)& 0 & \cdots & 0 & 0  \\
0 & 0 & 1/d(c_3)& \cdots & 0 & 0 \\
\cdots & \cdots & \cdots & \cdots & \cdots & \cdots \\
0 & 0 & 0 & \cdots & 1/d(c_{p-1}) & 0 \\
\end{pmatrix}
\]
That is, $M(L_C)=\Pi W$, where $\Pi$ is a transitive permutation matrix, and $W$ is a diagonal matrix whose diagonal entries have the form $W_{c}=1/d(c), c \in C$ where $d(c)=\deg(f: c \to f(c)) \in \{1, \ldots, \deg(f)\}$.  Thus $\sigma(L_C) = (\prod_{c\in C}d(c))^{-1/\#C}$.  
\item The insides of the curves comprising a cycle $C$ are pairwise disjoint disks which are permuted up to isotopy by the action of $f$, and each such disk contains at least one periodic point of $P$.
\item If either (i) each cycle of $P$ contains a critical point, or (ii) If there is a unique cycle in $P$ not containing a critical point, and this cycle has length $1$, then for each cycle $C\subset \Gamma$, we have $d(c) \leq 1/2$ for some $c \in C$, and so $\sigma(M(L_C))\leq 2^{-1/\#C}<2^{-1/\#P}$. 

\end{enumerate}
\end{prop}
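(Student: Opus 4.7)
The plan is to verify the six parts in sequence, leveraging a key feature of topological polynomials: any essential simple closed curve $\gamma \subset \rs - P$ bounds a unique ``inside'' disk $D(\gamma)$, and any nontrivial component $\wtgamma$ of $f^{-1}(\gamma)$ has $f\colon D(\wtgamma) \to D(\gamma)$ a proper branched cover of degree $\deg(f\colon \wtgamma \to \gamma)$. Part (1) then follows by contradiction: if a preimage $\wtgamma \in \Gamma$ of some $\gamma \in I$ failed to be innermost, a witness $\beta \in \Gamma$ inside $\wtgamma$ would push forward either as a distinct $\Gamma$-curve inside $\gamma$ (contradicting $\gamma \in I$) or as a second nontrivial preimage of $\gamma$ whose inside overlaps $D(\wtgamma)$ (contradicting the disjointness of insides of distinct preimages noted in $(**)$). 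Part (2) is immediate since $f(\wtgamma)$ is well-defined up to isotopy, making $f_*$ a partial endofunction of the finite set $I$, whose orbit structure yields the claimed decomposition. Part (3) is a formal consequence: since $L$ runs opposite to $f_*$, it respects the partition, and on the pre-cycle tree $K - C(K)$ iterating $L$ traverses a finite rooted forest and is thus nilpotent.

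Part (4) is a direct matrix computation from the cyclic pattern, whose $\#C$-th power is diagonal with every entry equal to $\left(\prod_{c \in C} d(c)\right)^{-1}$. Part (5) follows from the pairwise disjointness of innermost insides and the cyclic permutation of disks by $f$: since $f^{\#C}$ stabilizes each $D(c_i)$ and preserves the finite set $P \cap D(c_i)$, an endofunction-on-finite-set argument produces a periodic $P$-point in each disk, while essentiality of $c_i$ ensures $|P \cap D(c_i)| \geq 2$.

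Part (6) is the main technical step, driven by Riemann--Hurwitz on disks: $\chi(D(c_i)) = 1$ yields $d(c_i) = 1 + \sum_{y \in \mathrm{Crit}(f) \cap D(c_i)}(\mathrm{mult}_y - 1)$, so $d(c_i) \geq 2$ precisely when $D(c_i)$ contains a critical point of $f$. Assume for contradiction every $d(c_i) = 1$, so no critical points lie in $\bigcup_i D(c_i)$. Under hypothesis (i), a periodic $P$-point in some $D(c_j)$ provided by (5) lies in a $P$-cycle fully contained in $\bigcup_i D(c_i)$ (since $f$ permutes these disks) which by hypothesis contains a critical point of $f$ lying in $P$, hence in some $D(c_k)$; contradiction. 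Under hypothesis (ii) with $\#C \geq 2$ the same argument works since every $P$-cycle meeting $\bigcup_i D(c_i)$ has length a multiple of $\#C \geq 2$ and so by (ii) contains a critical point. The remaining case $\#C = 1$ is handled as follows: $f\colon D(c_1) \to D(c_1)$ is then a degree-one branched cover, hence a homeomorphism, restricting to a permutation of the $\geq 2$ points of $P \cap D(c_1)$; every cycle of this permutation is a $P$-cycle in $D(c_1)$ which, absent critical points in $D(c_1)$, must have length one, so by the uniqueness clause of (ii) every point of $P \cap D(c_1)$ equals the exceptional fixed point, contradicting $|P \cap D(c_1)| \geq 2$. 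Given some $d(c) \geq 2$, the bound $\sigma(M(L_C)) = \left(\prod_{c \in C} d(c)\right)^{-1/\#C} \leq 2^{-1/\#C}$ follows from (4), and $\#C < \#P$ holds because the $\#C$ pairwise disjoint disks $D(c_i)$ each contain at least one $P$-point and equality would force each disk to contain exactly one, rendering every $c_i$ peripheral.

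The principal obstacle is the subcase $\#C = 1$ under hypothesis (ii): the periodic $P$-point produced by (5) may coincide with the unique exceptional fixed point of $P$, so condition (ii) does not directly yield a critical point in $D(c_1)$; the resolution hinges on exploiting essentiality (which supplies $|P \cap D(c_1)| \geq 2$) together with the fact that $f$ restricts to an honest permutation of $P \cap D(c_1)$, forcing a second exceptional fixed cycle and violating the uniqueness asserted in (ii).
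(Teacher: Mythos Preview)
Your proof is correct and follows essentially the same strategy as the paper: both arguments exploit the ``inside disk'' structure peculiar to topological polynomials, the observation $(**)$ that distinct nontrivial preimages of a curve have disjoint insides, and Riemann--Hurwitz on disks to link $d(c_i)\geq 2$ with the presence of a critical point in $D(c_i)$.

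Where you differ is in the level of detail for part~(6). The paper's proof is compressed to a couple of sentences (``Since every innermost disk contains at least two postcritical points, the assumptions imply that every innermost disk contains a postcritical point which lies in a cycle containing a critical point. \ldots\ Statement~(6) easily follows from this and observation~$(**)$''), and in particular does not separate the subcases $\#C\geq 2$ and $\#C=1$ under hypothesis~(ii). Your treatment makes this case split explicit: for $\#C\geq 2$ you observe that any periodic $P$-point in $\bigcup_i D(c_i)$ has period divisible by $\#C$, hence its cycle cannot be the length-one exceptional cycle; for $\#C=1$ you use that a degree-one self-map of $D(c_1)$ induces a genuine permutation on $P\cap D(c_1)$, forcing at least two critical-free $P$-cycles and contradicting the uniqueness clause in~(ii). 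This is exactly the right resolution of what you correctly flag as the principal obstacle, and it fills a gap that the paper's sketch leaves to the reader.

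One small point: your justification of part~(2) (``$f(\wtgamma)$ is well-defined up to isotopy'') is slightly imprecise, since $\wtgamma$ is an isotopy class and the image of an arbitrary representative need not be simple. The paper's argument---distinct innermost curves bound disjoint disks, so preimage disks are disjoint, so essential boundary components cannot be homotopic---is cleaner here. This is a minor wording issue, not a substantive gap.
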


Assuming this proposition, Theorem \ref{thm:Lforpoly} follows easily.
We inductively decompose $\Gamma$ by partitioning it into innermost
and non-innermost elements, setting $\Gamma^0:=\Gamma$ and
$\Gamma^{r+1}:=\Gamma^r - I(\Gamma^r)$.  The inductive assumption that
$f^{-1}(\zG^r)\supset \zG^r $ and the conclusion from (1) that
$(f^{-1}(I(\zG^r))) \cap \Gamma^r \subset I(\zG^r)$ imply that
$f^{-1}(\Gamma^{r+1}) \supset \Gamma^{r+1}$.  This allows us to
proceed inductively.  At each stage $r$, we apply conclusion (1) of
the proposition to conclude that $\sigma(L_{\Gamma^r}) \leq
\max\{\sigma(L_{I(\Gamma^r)}), \sigma(L_{\Gamma^{r+1}})\}$. The
remaining conclusions of the proposition bound
$\sigma(L_{I(\Gamma^r)})$.

We continue with the proof of the proposition. 

(1) Suppose to the contrary that for some $\gamma \in I$ and $\wtgamma
\in \Gamma - I$ we have $\gamma \leftarrow \wtgamma$. Then there
exists $\alpha \in \Gamma$ and $\wtalpha \in \Gamma$ with $\alpha
\leftarrow \wtalpha$ and $\wtalpha \subset D(\wtgamma)$.  But then up
to isotopy $\alpha = f(\wtalpha) \subset f(D(\wtgamma)) = D(\gamma)$.
However, $\za\ne \zg$ because the insides of distinct preimages of
$\zg$ must be disjoint.  This contradicts the assumption that $\gamma
\in I$.

(2) Two distinct elements of $I$ bound disjoint disks.  Therefore, any two components of the preimages of these disks are also disjoint. Their boundaries, if essential, cannot then be homotopic in $\rs - P$. So any curve in $I$ has at most one parent in $I$.

(3) Immediate from (2).

(4) Immediate from (3) and observation (**) above.   

(5) The elements of $C$ are in $I$, thus they are nontrivial, and their insides are pairwise disjoint and must contain elements of $P$, and $f$ maps insides to insides. 

(6) 
Since every innermost disk contains at least two postcritical points,
the assumptions imply that every innermost disk contains a
postcritical point which lies in a cycle containing a critical point.
Therefore every inner disk contains a postcritical point which lies in
a cycle containing a critical point.  Statement (6) easily follows
from this and observation (**).

This concludes the proof of Proposition \ref{prop:polygamma} and Theorem \ref{thm:Lforpoly}

\subsection{A polynomial example with $\#P=5$}
\label{subsecn:julia4cycle}

A coarse invariant of a Hurwitz biset containing a Thurston map $f$ is
its underlying \emph{critical orbit portrait}--the directed weighted
graph with vertex set $P_f \cup C_f$ and an edge joining $z$ to
$f(z)$ weighted by the local degree $\deg(f,z)$.  We think of this as
defining a self-map $\tau: P_f \cup C_f \to P_f$. Floyd \emph{et
al.} \cite{floyd2022realizingpolynomialportraits} have characterized
those polynomial portraits which are \emph{completely unobstructed} in
the sense that any topological polynomial with such a portrait is
equivalent to a complex polynomial:

\begin{thm}
\label{thm:polyportrait}
Suppose $\Pi$ is an abstract polynomial portrait. Then every
Thurston map with portrait isomorphic to $\Pi$ is unobstructed if and only if $\Pi$ 
satisfies at least one of the following conditions.
\begin{enumerate}
\item $\Pi$ has at most three postcritical vertices.
\item Every cycle of $\Pi$ is an attractor (contains a critical point).
\item $\Pi$ has a single non-attractor cycle, and it has length one.
\item Every finite postcritical vertex of $\Pi$ is in a single non-attractor cycle,
this cycle has length $p^k$ 
for some prime number $p$ and some positive
integer $k$, the finite postcritical vertices can be enumerated as
$\{v_i: 0 \leq  i < p^k\}$ such that $\tau(v_i) = v_{i+1 \bmod p^k}$ for every $i\in \{0, \ldots, p^k-1\}$, and 
if $v_j$ is a critical value, then $j$ is a multiple of $p^{k-1}$.
\end{enumerate}
\end{thm}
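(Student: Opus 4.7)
The plan is to reduce the whole theorem to the classical Levy-cycle criterion for topological polynomials: $f$ is unobstructed if and only if $f$ admits no Levy cycle. After passing to innermost curves as in Proposition~\ref{prop:polygamma}, it suffices to rule out degenerate Levy cycles, i.e., cycles of pairwise disjoint disks $D_0,\dotsc,D_{\ell-1}$ with $f\colon D_{i+1}\to D_i$ a degree-one homeomorphism (indices mod $\ell$). Each $D_i$ must then contain at least two non-critical postcritical points, and $\bigcup_i D_i$ is forward-invariant under $f$.

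For the sufficiency direction I would dispatch the four conditions in turn. Cases (1) and (2) are respectively vacuous (no essential multicurves on $S^2$ with at most three marked points) and an immediate consequence of Berstein-Levy \cite[Thm.\ 10.3.9]{MR3675959}, since the postcritical points trapped in $\bigcup_i D_i$ would form $f$-periodic non-attractor cycles. For (3), the unique non-attractor fixed point $p$ must lie in some $D_j$; since $f$ permutes the $D_i$'s cyclically and $p$ is fixed, the disks being disjoint forces $\ell=1$. Then $f|_{D_0}$ is a homeomorphism with $p$ as its unique fixed point, so by invertibility no other point of $D_0\cap P$ can iterate to $p$; since every postcritical point in $D_0$ must eventually iterate to $p$, we get $|D_0\cap P|=1$, contradicting $|D_0\cap P|\geq 2$. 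Case (4) is the heart of the argument. Setting $S_i:=\{j : v_j\in D_i\}\subset \Z/p^k\Z$, the degree-one hypothesis and the shift dynamics $\tau(v_j)=v_{j+1}$ give $S_i=S_{i+1}+1\pmod{p^k}$, so $S_0$ is a coset of $\ell\Z/p^k\Z$; the constraints $|S_i|\geq 2$ and $\sum|S_i|\leq p^k$ then force $\ell$ to be a power of $p$ with $\ell\leq p^{k-1}$. Consequently $\ell\mid p^{k-1}$, so the subgroup $p^{k-1}\Z/p^k\Z$ containing all critical-value indices is contained in $\ell\Z/p^k\Z$, and all critical values in the cycle concentrate in a single disk $D_{i_0}$. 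A Riemann-Hurwitz count on $f^{-1}(D_{i_0})$ then closes the argument: one component is $D_{i_0+1}$ with degree $1$, while all finite critical branching $(d-1)$ must be absorbed by the other components, which must have total degree $d-1$; by RH, the number of other components equals $(d-1)-(d-1)=0$, which is incompatible with having positive total degree when $d\geq 2$.

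For the necessity direction I would construct, for every portrait $\Pi$ failing all of (1)-(4), a Thurston polynomial with portrait $\Pi$ exhibiting an explicit degenerate Levy cycle. The complementary cases to handle are: (a) two or more non-attractor cycles, (b) a unique non-attractor cycle of length at least two that is not a prime power, and (c) a unique non-attractor cycle of prime-power length $p^k$ but with a critical value at some index not divisible by $p^{k-1}$. In each case one performs a direct blow-up surgery around appropriately chosen non-attractor periodic points, producing a pair of essential disks on which the prescribed portrait is realized while the Levy condition is met; the combinatorial arithmetic (cycle lengths versus the index set of critical values) in each case is precisely what was ruled out in the sufficiency proof. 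The main obstacle is the fine bookkeeping in case (4) of sufficiency: one must articulate the chain ``$\ell$ is a $p$-power, so $\ell\mid p^{k-1}$, so all critical values land in one disk, so the RH count fails'' so that the degree constraint closes with zero slack; a secondary challenge is ensuring in the necessity direction that each blow-up construction produces exactly the prescribed portrait with no spurious postcritical decoration.
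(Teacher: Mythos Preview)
The paper does not prove this theorem at all: it is quoted, with attribution, from Floyd \emph{et al.}\ \cite{floyd2022realizingpolynomialportraits}, and is used only to motivate Example~\ref{ex:cubicpoly}. So there is no in-paper proof to compare against; what you have produced is an independent attempt at the cited result.

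On its own merits, your sufficiency argument is sound. Cases (1)--(3) are routine once you observe (as you implicitly do) that $f$ restricted to $\bigcup_i D_i$ is a bijection, so every point of $P\cap\bigcup_i D_i$ lies on a non-attractor cycle; this streamlines your case (3) and removes the need for the ``iterate to $p$'' step. Your case (4) is correct: the relation $S_{i+1}+1\subset S_i$ together with cyclicity forces equality, whence $S_0+\ell=S_0$ and disjointness of the $S_i$ gives $\ell\mid p^k$; then $|S_0|\ge 2$ yields $\ell\le p^{k-1}$, so the critical-value indices in $p^{k-1}\Z/p^k\Z$ sit inside a single coset $S_{i_0}$. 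The Riemann--Hurwitz count closes cleanly once you use, as the paper does just before Proposition~\ref{prop:polygamma}, that for a topological polynomial every bounded component of the preimage of a disk is again a disk; you should make that appeal explicit rather than leave it implicit.

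Your necessity direction, by contrast, is only a promissory sketch. You have correctly enumerated the complementary cases (two non-attractor cycles; a single non-attractor cycle whose length is $\ge 2$ and not a prime power; a prime-power cycle with a critical value at a bad index), but ``perform a direct blow-up surgery'' hides all of the content. The actual work in \cite{floyd2022realizingpolynomialportraits} is to exhibit, for each such portrait, a topological polynomial realizing it \emph{together with} a Levy cycle, and to verify that the surgery does not disturb the portrait. Your write-up gives no mechanism for this and no argument that the arithmetic conditions you ruled out in sufficiency are exactly what permit the construction; in particular, case (b) requires a nontrivial combinatorial step (partitioning the cycle into two nonempty $f$-invariant blocks of equal size, which is possible precisely when the length has at least two prime factors or is a proper prime power handled in (c)). As written, this half is a plan rather than a proof.
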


In the first case of $\#P=3$, the group $\cG$ is trivial, so the Hurwitz biset reduces to a singleton. In cases 2 and 3, the biset is contracting, by Theorem \ref{thm:toppoly}.  A portrait can be iterated in the obvious way.  It is easy to see that the portraits in cases 1, 2, 3 are closed under iteration, but that those in case 4 are not. 
Below, we show by example that there exists a polynomial $f$ whose portrait is covered by case 4--so that $B(f)$ consists entirely of rational maps--but for which $B(f)^2$ contains obstructed maps. This naturally raises the following \\

\noindent{\bf Question:} \emph{Suppose $f$ is a topological polynomial whose portrait satisfies case 4.  When does $B(f)^*$ contain an obstructed element? }\\

Our treatment of the example also illustrates the theme that for
low-complexity polynomial examples with $\#P>4$, it is possible to
make by-hand computations of the strata scrambler.

\begin{ex}\label{ex:cubicpoly} A cubic polynomial with $\#P=5$
\end{ex}
Consider the following portrait, of type case 4, corresponding to a
topological polynomial of degree $3$: \[ \infty \stackrel{3} {\mapsto}
\infty\] \[ c_1 \stackrel{2}{\mapsto}a\;\;\;\;c_2
\stackrel{2}{\mapsto}c\] \[ a \stackrel{1}{\mapsto} b
\stackrel{1}{\mapsto} c \stackrel{1}{\mapsto} d \stackrel{1}{\mapsto}
a.\] It is easy to find topological and complex polynomials realizing
this portrait; we choose one, $f$, put $P:=P_f$, and write
$P:=\{a,b,c,d,\infty\}$ and put $\cG:=\PMod(S^2, P)$ as usual.  The
restriction $f: \C - f^{-1}(\{a,c\}) \to \C-\{a,c\}$ is a degree 3
unramified non-cyclic covering, and is therefore unique up to
isomorphism of covering spaces, and has trivial deck group. It follows
that any homeomorphism $g$ representing an element of $\cG$ lifts
under $f$ to a homeomorphism $\widetilde{g}: (S^2, f^{-1}(P)) \to
(S^2, f^{-1}(P))$ which must preserve the fibers of $f$ and the
property of being a critical point. It follows that $\widetilde{g}$
fixes each element of $f^{-1}(\{a,c\})$, but may permute the elements
of $f^{-1}(b)$ and of $f^{-1}(d)$. If $g$ is a Dehn twist about
$\{a,b\}$, then $\widetilde{g}$ acts as a transposition on $f^{-1}(b)$
and as the identity on $f^{-1}(d)$, and a Dehn twist about $\{b,c\}$
acts as a different transposition and as the identity on
$f^{-1}(d)$. By symmetry Dehn twists about $\{c,d\}$ and $\{d,a\}$ act
as different transpositions on $f^{-1}(d)$ and as the identity on
$f^{-1}(b)$.  It follows the induced map $\cG \to
\text{Sym}(f^{-1}(b)) \times \text{Sym}(f^{-1}(d))$ is surjective.
From this we conclude that there is a unique Hurwitz biset $B$ with
this portrait; this could also have been deduced by computing the
correspondence on moduli space (which turns out to be a map), and
noting that the computation depends only on the portrait. We also note
that $f^{-1}(b)\cap P=\{a\}$, $f^{-1}(d)\cap P=\{c\}$ and the
stabilizer of $\{a,c\}$ in $\text{Sym}(f^{-1}(b)) \times
\text{Sym}(f^{-1}(d))$ is isomorphic to $(\Z/2\Z)\times (\Z/2\Z)$.
From this we conclude that $[\cG:\cG_f]=6^2/2^2=9$.

In order to compute the strata scrambler of the Hurwitz biset
represented by $f$, we need some notation.  When $\#P=5$ every simple
closed curve $\gamma$ which is neither peripheral nor null homotopic
separates two postcritical points, say $x$ and $y$, from the other
three.  This gives a bijection $\gamma.\cG \mapsto \{x,y\}$ from
$\cG$-orbits of such curves to the set of two-element subsets of $P$.
We denote the $\cG$-orbit of the curve $\zg$ by $xy$, with the convention $xy=yx$.  We denote the
$\cG$-orbit of a 2-component multicurve with curve orbits $wx$ and $yz$
by $wx\&yz$.  We denote the $\cG$-orbits of simple closed curves which
are either peripheral or null homotopic by $\odot $.  We also use this
notation to denote the vertices of our strata scrambler.  We will
indicate weights of strata scrambler edges simply by the matrices of
the linear transformations.

Describing the strata scrambler amounts to describing all ways in
which $f$ pulls back multicurves, up to the action of $\cG$.  Equivalently, it amounts to all
ways in which the twists of $f$ pullback a fixed set of orbit
representatives, up to the action of $\cG$.  This is what we compute.  So let $g$ be a $\cG$-twist
of $f$.  

We begin with the orbit $bd$.  We choose a proper arc $\za$ for
$(S^2,P)$ with endpoints $b$ and $d$.  We view $\za$ as a core arc for
a simple closed curve $\zg$, and we consider the $g$-pullback of this
simple closed curve.  Since neither $b$ nor $d$ is a critical value,
$g^{-1}(\za)$ consists of three disjoint arcs.  The union of these
arcs contains $a$ and $c$ and no other postcritical point.  Since $\cG
\to \text{Sym}(f^{-1}(b)) \times \text{Sym}(f^{-1}(d))$ is surjective,
it is possible to choose $g$ so that $a$ and $c$ are contained in the
same connected component of $g^{-1}(\za)$ and it is possible to choose
$g$ so that $a$ and $c$ are not contained in the same connected
component of $g^{-1}(\za)$.  So our scrambler has an edge from $bd$ to
$ac$ and an edge from $bd$ to $\odot $ and no other edges with
initial vertex $bd$.  The edge from $bd$ to $ac$ has label $[1]$ (the
$1\times 1$ matrix with entry 1) because the nontrivial connected
component of $g^{-1}(\zg)$ when there is one maps to $\zg$ with degree
1.

Next consider the orbit $ac$.  Now a core arc for $\zg$ is a proper
arc $\za$ for $(S^2,P)$ with endpoints $a$ and $c$.  Two lifts of
$\za$ have endpoint $c_1$, the critical point of $g$ which maps to
$a$.  The other two endpoints of these lifts cannot both be $c_2$, the
critical point which maps to $c$, because every connected component of
the bounded disk $D(\zg)$ of $\zg$ is a topological disk.  So one lift
of $\za$ has endpoints $d$ and $c_1$, another lift has endpoints $c_1$
and $c_2$ and the third lift has endpoints $c_2$ and $b$.  Hence
$g^{-1}(\za)$ is homeomorphic to a line segment.  It contains exactly
two postcritical points, namely $b$ and $d$, and it maps to $\za$ with
degree 3.  Our scrambler has exactly one edge with initial vertex
$ac$.  This edge ends at $bd$ and its weight is $[1/3]$.

Next consider the orbit $ab$.  Two lifts of $g^{-1}(\za)$ have the
critical point $c_1$ in common and contain two preimages of $b$.  The third
lift is disjoint from them.  It contains $d$ and one preimage of $b$.
Using the surjectivity of $\cG \to \text{Sym}(f^{-1}(b)) \times
\text{Sym}(f^{-1}(d))$, we find that two edges of our scrambler begin
at $ab$.  One of these ends at $\odot$.  The other ends at $ad$ with
weight $[1]$.

We determine edges beginning at $ad$, $bc$ and $cd$ as we did for
$ab$.  Edges begin at $ad$, $bc$ and $cd$ and end at $\odot $.  Edges
with weights $[1]$ begin at $ad$, $bc$, respectively, $cd$ and end at
$cd$, $ab$, respectively, $bc$.

Next consider the orbit $b\infty $.  In this case all three $g$-lifts
of $\za$ meet at $\infty $ and end at the three elements of
$g^{-1}(b)$.  So $g^{-1}(\alpha)$ contains $\infty $, $a$ and no other
postcritical point.  Our scrambler has exactly one edge with initial
endpoints $b\infty $.  It ends at $a\infty $ and it has weight $[1/3]$.

Similarly, our scrambler has exactly one edge with initial endpoint
$d\infty $.  It ends at $c\infty$ and it has weight $[1/3]$.

Next consider the orbit $a\infty $.  The three $g$-lifts of $\za$ meet
at $\infty $ and two of them also meet at $c_1$.  So $S^2-g^{-1}(\za)$
consists of two open topological disks $D_1$, respectively $D_2$,
mapping to $S^2-\za$ with degree 1, respectively degree 2.  The set
$g^{-1}(\za)$ contains $d$ and $\infty $, and the disk $D_1$ contains
$b$.  There are essentially four possibilities for the locations of
the postcritical points $a$ and $c$.  One possibility is that $a,c\in
D_1$.  In this case the connected component of $g^{-1}(\zg)$ in $D_1$
separates $a$, $b$ and $c$ from $d$ and $\infty $.  The other
connected component of $g^{-1}(\zg)$ is null homotopic.  We obtain a
scrambler edge from $a\infty $ to $d\infty $ with weight $[1]$.  If
$a\in D_1$ and $c\in D_2$, then the connected component in $D_1$
separates $a$ and $b$ from $c$, $d$ and $\infty $, while the other
connected component is peripheral.  We obtain an edge from $a\infty $
to $ab$ with weight $[1]$.  If $a\in D_2$ and $c\in D_1$, then we
obtain an edge from $a\infty $ to $bc$ with weight $[1]$.  If $a,c\in
D_2$, then we obtain an edge from $a\infty $ to $ac$ with weight
$[1/2]$.

The situation for orbit $c\infty $ is similar to that for $a\infty $.
We obtain scrambler edges from $c\infty $ to $b\infty $, $cd$, $ad$,
respectively $ac$, with weights $[1]$, $[1]$, $[1]$, respectively
$[1/2]$.

We have so far handled all 1-component multicurves.  The 2-component
multicurves are handled in the same way using two disjoint arcs
instead of one.  Figure~\ref{fig:cubicex} succinctly describes  the
strata scrambler of this Hurwitz biset.

\begin{figure}
\centerline{\includegraphics[width=6in]{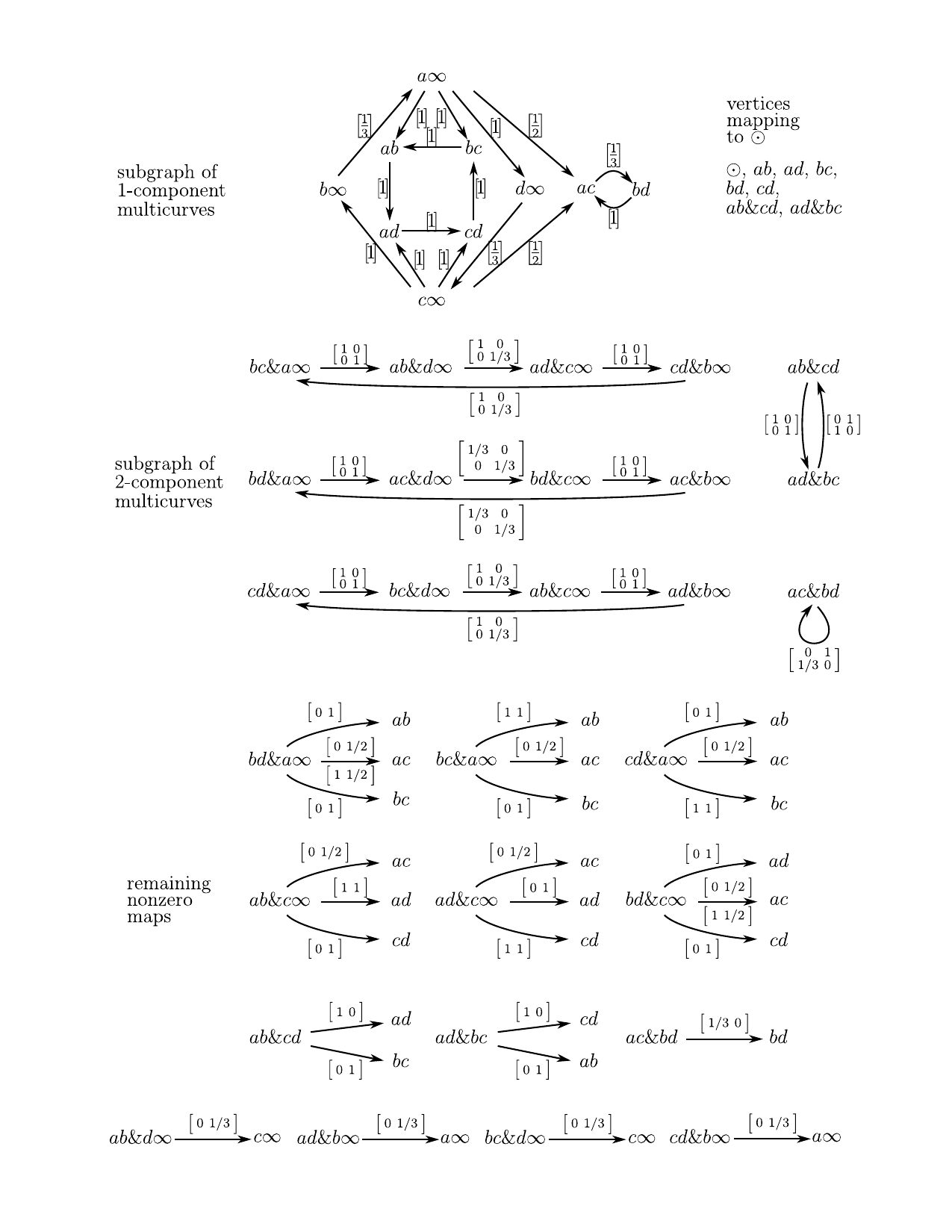}}
\caption{The scrambler for Example \ref{ex:cubicpoly}}
\label{fig:cubicex}
\end{figure}

Note the curves $ab, cd$ are disjoint, as are $bc, ad$.  The $2$-component multicurve cycle $ab\&cd \rightarrow ad\&bc \rightarrow ab\&cd$ shows that the second iterate $B^2$ of $B$ contains an obstructed map, and so $B^n$ contains an obstructed map if $n$ is even.  In particular, $B$ is not contracting. Because every cycle has even length, if $n$ is odd, then $B^n$ consists only of rational maps.

\subsection{Case $\#P=4$}

Here we specialize to the case of a Hurwitz biset $B$ for which $\#P=4$.  The space $\cT$ may be identified with the upper-half plane $\IH$ and the moduli space $\cM$ with $\Pone - \{0,1,\infty\}$ in such a way that the deck group $\cG$ of the cover $\pi: \cT \to \cM$ is identified with the principal congruence subgroup $P\Gamma(2)$ acting by linear fractional transformations.  The domain $\cW$ of the correspondence is thus a Riemann surface with a finite number of cusps.  These cusps are in bijective correspondence with orbits of cusps under the subgroup of liftables $\cG_f$, for any chosen representative $f \in B$.  In this setting, the analysis is particularly tractable. 

In the present situation $\mc =\curves$.  The set $\curves$ may be
identified with ``slopes'' of curves.  With such identifications,
given a curve $\gamma$ of slope $s$, pinching $\gamma$ so that its
hyperbolic length tends to zero produces a path in $\IH$ which limits
to a cusp point $t=-1/s$. We denote by $\IH^* = \IH \cup
\widehat{\Q}$ where $\widehat{\Q}:=\Q \cup \{\infty:=\pm 1/0\}$
as usual. A nonempty multicurve $\Gamma$ has exactly one element, and
under the action of $\cG$, there are precisely three orbits,
corresponding to whether the slope of the curve, written in lowest
terms as a rational $p/q$, takes the form $0/1, 1/0, 1/1$ when
numerator and denominator are reduced modulo $2$.

By adjoining finitely many points called cusps to $\cM$ and $\cW$,
they can be enlarged to compact Riemann surfaces $\cM^*$ and $\cW^*$.
The maps $\zeta$, $\phi$, $\rho$ and $\pi$ in Section~\ref{subsec:corr}
extend continuously to maps, $\zeta:\IH^*\to \cW^*$, $\phi: \cW^*\to
\cM^*$, $\rho:\cW^*\to \cM^*$ and $\pi:\IH^*\to \cM^*$, with $\zeta$,
$\zf$ and $\zp$ taking cusps to cusps. The map $\sigma_f: \IH \to \IH$
extends continuously to $\IH^*$ when equipped with the topology in
which horoballs tangent to a cusp form a local neighborhood basis.

Suppose now a Thurston map $f:(S^2, P) \to (S^2, P)$ is given. Under the pullback relation on curves, a nontrivial curve $\gamma$ pulls back to at most one nontrivial curve $\tilde{\gamma}$, though there may be several components.  Thus the matrices for the transformations $L[f,\Gamma]$ are one-by-one, of the form $[m]$; we call the value $m$ the \emph{multiplier} of the curve (and of the corresponding slope $s$ and cusp $t$).

The main result of this subsection, Proposition \ref{prop:cuspdegrees}, implies that these multipliers, and hence the strata scrambler, may be directly computed from the asymptotic behavior of $\phi$ and $\rho$ at the cusps.  We then illustrate this in two examples.

\begin{prop}
\label{prop:cuspdegrees} Let $t$ be a cusp of $\IH^*$. Suppose $f$ has multiplier $m$ at $t$ and set $d:=[\Stab_\cG(t):\Stab_{\cG_f}(t)]$.  Then 
\begin{enumerate}
\item $\deg(\phi,\zeta(t))=d$, and
\item $\deg(\rho, \zeta(t))=md$.
\end{enumerate}
\end{prop}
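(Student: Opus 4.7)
The plan is to carry out the computation in local uniformizers at the cusps, exploiting the commutative relations $\pi = \phi \circ \zeta$ and $\pi \circ \sigma_f = \rho \circ \zeta$ from the diagram in \S\ref{subsec:corr}. First I would conjugate by $\mathrm{PSL}(2,\mathbb{R})$ so that $t$ and $\widetilde{t} := \sigma_f(t)$ both sit at $\infty$, with primitive parabolic generators of $\Stab_\cG(t)$ and $\Stab_\cG(\widetilde{t})$ being $\tau \mapsto \tau + L$ and $\widetilde{\tau} \mapsto \widetilde{\tau} + \widetilde{L}$ respectively. By hypothesis $\Stab_{\cG_f}(t)$ is then generated by $\tau \mapsto \tau + Ld$. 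In these normalizations the local uniformizers become $q_\cM = e^{2\pi i \tau / L}$ at $\pi(t) \in \cM^*$, $q_\cW = e^{2\pi i \tau / (Ld)}$ at $\zeta(t) \in \cW^*$, and $q_{\cM,\widetilde{t}} = e^{2\pi i \widetilde{\tau} / \widetilde{L}}$ at $\pi(\widetilde{t}) \in \cM^*$.

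Statement (1) will then be immediate: the relation $\pi = \phi \circ \zeta$ reads $q_\cM = q_\cW^d$ in these coordinates, giving $\deg(\phi,\zeta(t)) = d$. For statement (2), the relation $\pi \circ \sigma_f = \rho \circ \zeta$ reads $\rho(q_\cW) = e^{2\pi i \sigma_f(\tau)/\widetilde{L}}$, so it suffices to prove the asymptotic
\[
  \mathrm{Im}(\sigma_f(\tau)) \sim \frac{m\widetilde{L}}{L}\,\mathrm{Im}(\tau) \quad \text{as } \mathrm{Im}(\tau) \to \infty.
\]
Granting this, $|\rho(q_\cW)| \sim |q_\cW|^{md}$ near $q_\cW = 0$, and since $\rho$ extends holomorphically to the cusp $\zeta(t) \in \cW^*$, this forces $\deg(\rho,\zeta(t)) = md$.

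To establish the displayed asymptotic I would combine two ingredients. The first is the universal formula $\ell(\tau,\gamma) \sim c L/\mathrm{Im}(\tau)$ for the hyperbolic length of the pinching curve at a cusp normalized to $\infty$ of width $L$, with $c$ independent of the cusp; universality follows because the scaling $\tau \mapsto \tau/L$ lies in $\mathrm{PSL}(2,\mathbb{R})$, preserves the hyperbolic metric and the curve $\gamma$, and conjugates the width-$L$ cusp to the width-$1$ standard one. The second is the asymptotic multiplier relation $\ell(\sigma_f(\tau),\widetilde{\gamma}) \sim (1/m)\,\ell(\tau,\gamma)$: an annular neighborhood of $\gamma$ in $(S^2,\tau)$ of modulus $M$ pulls back under $f$ to $k$ disjoint annuli of moduli $M/d_1,\dots,M/d_k$ around the preimage components, all in the common homotopy class $\widetilde{\gamma}$, so Gr\"otzsch gives $\mathrm{mod}(\widetilde{\gamma},\sigma_f(\tau)) \geq \sum_i M/d_i = mM$; combined with the standard equivalence $\ell \sim \pi/\mathrm{mod}$ for degenerating curves this yields the length comparison. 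Substituting the first asymptotic into the second and solving for $\mathrm{Im}(\sigma_f(\tau))$ produces the displayed asymptotic.

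The main obstacle will be upgrading the Gr\"otzsch step from an inequality to the needed asymptotic equality, $\mathrm{mod}(\widetilde{\gamma},\sigma_f(\tau)) = mM + O(1)$ as $M \to \infty$, which requires control of the modulus of the complement of the pullback annuli in a neighborhood of $\widetilde{\gamma}$. This control is standard but technical; I would handle it by adapting the extremal-length bookkeeping used in the proof of \cite[Prop.~7.4]{selinger:pullback}, which is invoked for closely related estimates in \S\ref{secn:3to1}.
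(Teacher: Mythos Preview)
Your approach is correct but takes a substantially more analytic route than the paper. Both proofs handle statement~(1) identically, reading off $\deg(\phi,\zeta(t))=d$ from the quotient description $H/\Stab_{\cG_f}(t)\hookrightarrow\cW$ and $H/\Stab_\cG(t)\hookrightarrow\cM$ of cusp neighborhoods. The divergence is in statement~(2).

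You compute $\deg(\rho,\zeta(t))$ via the asymptotic $\mathrm{Im}(\sigma_f(\tau))\sim (m\widetilde L/L)\,\mathrm{Im}(\tau)$, which you derive from the length/modulus comparison $\ell(\sigma_f(\tau),\widetilde\gamma)\sim m^{-1}\ell(\tau,\gamma)$; as you note, this requires sharpening the Gr\"otzsch inequality to an asymptotic equality $\mathrm{mod}(\widetilde\gamma,\sigma_f(\tau))=m\,\mathrm{mod}(\gamma,\tau)+O(1)$, which is standard (it is the content of Selinger's estimates you cite) but adds a layer of analysis. The paper instead stays purely topological: it observes that $\deg(\rho,\zeta(t))$ equals the index of the image under $\widetilde\rho_*$ of $\pi_1(H/\Stab_{\cG_f}(t))\cong\Stab_{\cG_f}(t)$ inside $\pi_1(H'/\Stab_\cG(t'))\cong\Stab_\cG(t')$, and computes this index directly from the Dehn-twist lifting relation $\eta^d\circ f=f\circ(\eta')^{md}$, which is an immediate consequence of the definition of the multiplier $m$ (equivalently, of Proposition~\ref{prop:AtL}). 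This bypasses all length and modulus estimates entirely.

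What each buys: your route makes the geometric meaning of the multiplier visible and connects naturally to the estimates already used in \S\ref{secn:3to1}, but at the cost of invoking a nontrivial analytic lemma. The paper's route is shorter and self-contained, exploiting that the local degree of a holomorphic map at a puncture is a $\pi_1$ invariant computable from the biset structure alone.
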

  \begin{proof} There exists a horoball $H\subseteq \IH$ at $t$ such
that if $\varphi\in \cG$ and $\sigma_\varphi(H)\cap H\ne \emptyset $,
then $\varphi\in\Stab_\cG(t)$.  It follows that the canonical map
$H/\Stab_{\cG_f}(t)\to \cW$ is a homeomorphism onto a deleted
neighborhood of $\zeta(t)$ in $\cW^*$.  Similarly, $\pi$ induces a
homeomorphism $H/\Stab_\cG(t)$ onto a deleted neighborhood of $\pi(t)$
in $\cM^*$.  Statement 1 follows.

To prove the second statement, set $t':=\sigma_f(t)$.  Choose a horoball in $\IH$ at $t'$ in
the same way that $H$ was chosen for $t$.  Because $\sigma_f$ is
continuous at $t$, we may assume that $\sigma_f(H)\subseteq H'$.  As
before, the canonical map $H'/\Stab_\cG(t')\to \cM$ is a
homeomorphism onto a deleted neighborhood of $\pi(t')$ in $\cM^*$.  We
obtain the following commutative diagram for some map
$\widetilde{\rho}$.
 \begin{equation*}
\xymatrix{H\ar[r]^{\sigma_f}\ar[d] & H'\ar[d] \\
H/\Stab_{\cG_f}(t) \ar@{^{(}->}[d] 
\ar[r]^{\widetilde{\rho}} & H'/\Stab_\cG(t') \ar@{^{(}->}[d] \\
\mathcal{W^*}\ar[r]^\rho & \cM^*} 
  \end{equation*}
The degree of $\rho$ at $\zeta(t)$ thus equals the degree of
$\widetilde{\rho}$.  We may view $\Stab_{\cG_f}(t)$ and
$\Stab_\cG(t')$ as the fundamental groups of
$H/\Stab_{\cG_f}(t)$ and $H/\Stab_\cG(t')$.  

Let $\gamma$ be a curve with slope $-1/t$, and let $\gamma'$ be a
curve with slope $-1/t'$.  Then $\gamma'=f^{-1}(\gamma)$ as
multicurves, $\Stab_\cG(t)=\text{Tw}(\gamma)$ and
$\Stab_\cG(t')=\text{Tw}(\gamma')$.  Let $\eta:=\text{tw}(\gamma)$
and $\eta':=\text{tw}(\gamma')$ be generators for $\Stab_\cG(t)$ and
$\Stab_\cG(t')$.  Then $\eta^d$ is a generator for $\Stab_{\cG_f}(t)$.
Since $[m]$ is the matrix of $L[f,\gamma]:\R[\gamma]\to \R[\gamma']$,
it follows that $\eta^d\circ f=f\circ \eta'^{md}$ and $\sigma_f\circ
\sigma_{\eta^d}=\sigma_{\eta'^{m d}}\circ \sigma_f$.  In other words,
the map on fundamental groups induced by $\widetilde{\rho}$ sends the
generator in the domain to $md$ times the generator in the
codomain. Thus the local degree of $\rho$ at $\zeta(t)$ is $m d$.

\end{proof}

Hence if $\rho, \phi$ are known explicitly, then the series expansions
at the cusps allow for the computation of the multipliers, and hence
of the scrambler. We now apply this to some examples.

\begin{ex}
\label{ex:dendrite} The dendrite and rabbit quadratic polynomials
\end{ex}

We compute here $\cS[B]$ for each of two Hurwitz
bisets $B$: that of the ``dendrite'' $f(z)=z^2+i$ and that of the
``rabbit'' $g(z)=z^2+\kappa$ where $\Im(\kappa)>0$ and $z=0$ has period $3$.  

For the dendrite, we have the postcritical set $P_f:=\{i, -1+i, -i,
\infty\}$.  Let $a, b, c$ be curves which are boundaries of small
regular neighborhoods of the Euclidean segments joining $-i$ to $+i$,
$+i$ to $-1+i$, and $-1+i$ to $-i$, respectively.  So $\mc(S^2,
P_f)\simeq \{a,b,c,\emptyset\}$. It is shown in \cite[\S
6]{bartholdi:nekrashevych:twisted} that in this case $\rho$ is
injective, and (in our notation) one may choose coordinates so that
$\cM=\rs - \{0,1,\infty\}$ and $\phi(\rho^{-1}(w))=(1-2/w)^2$, where
the cusps $0$, $\infty$, and $1$ are identified with $\cG.a, \cG.b,
\cG.c$, respectively.  Applying Proposition \ref{prop:cuspdegrees}, we
find $\cS[B]$ is given by \[[1] \lefttorightarrow c
\stackrel{[1]}{\longrightarrow} b \stackrel{[1/2]}{\longrightarrow} a
\stackrel{0}{\longrightarrow} \emptyset \righttoleftarrow 0 .\] One
can also verify this directly, using the fact that $\sfX:=\{f, f\cdot
a\}$ is a basis of $B$, applying the definition of the scrambler, and
finding preimages of the curves $a, b, c$ under $f$ and $f \cdot a$;
here we denote also by $a$ the right Dehn twist about $a$.

For the rabbit, setting $a, b, c$ to be boundaries of neighborhoods of
segments joining $0$ to $\kappa$, $\kappa$ to $\kappa^2+\kappa$, and
$\kappa^2+\kappa$ to $0$, respectively, we find similarly that $\cS
[B]$ is represented by \[ \xymatrix{ 0 \lefttorightarrow \emptyset &
\ar[l]_(.3){0} c \ar[r]^{[1]}& b \ar[r]^{[1/2]} & a
\ar@/^1pc/[ll]^{[1/2]} \\ } \]

\begin{ex}\label{ex:fixed}Critically fixed cubic and its impure twist
\end{ex}

Here we compute $\cS[B(f)]$ for the Hurwitz biset $B(f)$ determined by a cubic map $f$ with four simple fixed critical points.   Writing the most general normal form for a cubic rational map with fixed critical points at $0, 1, \infty$, and having a critical point at $x$ mapping to a critical value $y$, we see by direct calculation that $x, y$ are rational functions of a complex variable $t$--that is, the genus of $\cW$ is equal to zero--and that these are given by $x(t)=\rho(t)$ and $y(t)=\phi(t)$ where 
\[ \rho(t)=\frac{-1+2t+3t^2}{4t},\]
\[ \phi(t)=\frac{(1+t)(-1+3t)^3}{16t}.\]
Under the correspondence $\psi=\phi \circ \rho^{-1}$, each of the three cusps $a=0, b=1, c=\infty$ is fixed; each has two possible multipliers, $1$ and $1/3$; it follows that the set of cusps is completely invariant under the multivalued map $\phi \circ \rho^{-1}$.  So $\cS[B(f)]$ is represented by 
\[ 
\xymatrix{
\{[1/3], [1]\} \lefttorightarrow a & \{[1/3], [1]\} \lefttorightarrow b & \{[1/3], [1]\} \lefttorightarrow c  \\
}
.\]
Direct computation shows that there are no fixed-points in moduli space, so every element of $B(f)$ is obstructed; note however that this cannot be immediately inferred from the diagram above, without further argument. 

We now ``twist'' the preceding example with an impure mapping class
element to obtain a new example. Let $P:=P_f$.  There exists $h:
(S^2,P) \to (S^2,P)$ which is an impure homeomorphism cyclically
permuting three elements of $P$ and fixing the remaining one such that
the induced map on moduli space cyclically permutes the cusps as $c \mapsto b \mapsto a \mapsto c$.  The composition $g:=h\circ f$ is a Thurston map with $P_g=P_f$.   But now $\cS[B(g)]$ is represented by 
\[ 
\xymatrix{
a  \ar[rr]^{\{[1/3],[1]\}} && b \ar[rr]^{\{[1/3],[1]\}} && c  \ar@/^1pc/[llll]^{\{[1/3], [1]\}}
}
.\]
No element of $B(g)$ can have an invariant multicurve.  In particular, no element of $B(g)$ can have an obstruction.  Thus $B(g)$ consists entirely of unobstructed maps. 
However, the scrambler for $g^3$ now fixes each cusp, and each cusp has the set of multipliers $\{1/3^3, 1/3^2, 1/3, 1\}$.  In particular, there are obstructed elements in $B(g^3)$.

\subsection{Nearly Euclidean Thurston (NET) maps}
\label{subsecn:NETmaps}

For so-called Nearly Euclidean Thurston (NET) maps
\cites{cfpp:net, kmp:origami, Floyd:2017kq, MR4278340, Floyd:2017fj},
we have $\#P=4$, and there is an algorithm 
which 
computes the strata scrambler.
In turn, this allows one to check if the conditions in Theorem
\ref{thm:main} hold to see if the biset is contracting.  These
algorithms are implemented in the computer program {\tt NETMap}
\cite{netmapsite}.  
The NET map website \cite{netmapsite} has thousands of NET map strata
scramblers.

Quite a few NET maps have contracting bisets.  NET maps arise from
$2\times 2$ matrices of integers, and such matrices have two
elementary divisors.  Almost all NET maps with equal elementary
divisors have contracting bisets.

We explain ``almost all''.  Let $f$ be a NET map whose elementary
divisors equal the integer $n\ge 2$.  Then $\deg(f)=n^2$.  The map $f$
is a flexible Latt\`{e}s  map postcomposed with a push homeomorphism.
The push homeomorphism pushes the four postcritical points of the
flexible Latt\`{e}s  map to a set $H$ of four points which can be
viewed as elements of the finite Abelian group $A=(\bbZ/2n\bbZ)\oplus
(\bbZ/2nZ)$ modulo the action of $\{\pm 1\}$.  Any four-point subset
$H\subseteq A/\{\pm 1\}$ is possible.  Using the results of Section 4
of \cite{cfpp:net}, one can show that if the biset containing $f$ is not
contracting, then $A$ contains a cyclic subgroup $B$ of order $2n$
such that two points in $H$ lift to $B$ and the other two lift to the
unique nontrivial coset of $B$ which contains an element of order 2.
This is a strong condition.

For example, consider the case $n=4$.  So $\deg(f)=16$.  According to
\cite{netmapsite}, there are 155 mapping class group Hurwitz classes
(44HClass1-155) with elementary divisors $(4,4)$.  Here Hurwitz
equivalence allows for composition by an arbitrary element of the
mapping class group, and so these Hurwitz classes are in general
finite unions of the augmented Hurwitz classes considered in the rest
of this work.  All bisets which arise from 141 of them are
contracting.  Of these 141 Hurwitz classes, 6 have constant Thurston
pullback maps: every map in the strata scrambler is the zero map.

\bibliographystyle{amsalpha}
\bibliography{cuhrefs.bib}

\end{document}